\newcommand{\addresseshere}{%
  \enddoc@text\let\enddoc@text\relax
}
\newtheorem{theorem}{Theorem}
\newtheorem{corollary}{Corollary}
\newtheorem{lemma}{Lemma}
\newtheorem{remark}{Remark}
\newtheorem{definition}{Definition}
\newtheorem{proposition}{Proposition}
\newtheorem{conjecture}{Conjecture}
\newtheorem{question}{Question}
\newcommand{\bv}[1]{\mathbf{#1}}
\DeclareMathOperator{\M}{M}
\newcommand{\NN}{\mathbb{N}}
\newcommand{\FF}{\mathbb{F}}
\newcommand{\PP}{\mathbb{P}}
\newcommand{\eps}{\varepsilon}
\newcommand{\Var}{{\mathbb V}\mathrm{ar}}
\newcommand{\E}{{\mathbb E}}
\newcommand{\cP}{{\mathcal P}}
\newcommand{\cS}{{\mathcal S}}
\newcommand{\cL}{{\mathcal L}}
\newcommand{\cC}{{\mathcal C}}
\newcommand{\cR}{{\mathcal R}}
\newcommand{\cK}{{\mathcal{K}}}
\newcommand{\N}{{\mathbb N}}
\newcommand{\R}{{\mathbb R}}
\newcommand{\bq}{{\mathbf{q}}}
\newcommand{\bOmega}{{\mathbf{\Omega}}}
\newcommand{\bx}{{\mathbf{x}}}
\newcommand{\bu}{{\mathbf{u}}}
\newcommand{\bX}{{\mathbf{X}}}
\newcommand{\by}{{\mathbf{y}}}
\newcommand{\1}{\mathrm{1}}
\newcommand{\dd}{\mathrm{d}}
\DeclareMathOperator{\inter}{int}
\DeclareMathOperator{\bd}{bd}
\newcommand{\cercle}[4]{
\node[circle,inner sep=0,minimum size={2*#2}](a) at (#1) {};
\draw[thick, fill=gray!30] (a.#3) arc (#3:{#3+#4}:#2);
}
\newcommand{\cercleD}[4]{
\node[circle,inner sep=0,minimum size={2*#2}](a) at (#1) {};
\draw[thick, style=dotted, fill=gray!30] (a.#3) arc (#3:{#3+#4}:#2);
}
\title[Discrepancy of stratified samples]{\large Discrepancy of stratified samples\\ from partitions of the unit cube}
\author{Markus Kiderlen}
\address{Aarhus University, Aarhus, Denmark}
\email{kiderlen@math.au.dk}
\author{Florian Pausinger}
\address{Queen's University Belfast, Belfast, United Kingdom.}
\email{f.pausinger@qub.ac.uk}
\date{}
\begin{document}

\keywords{Jittered sampling; Stratified sampling; $L_2$-discrepancy; sets of positive reach}
\subjclass[2010]{ 11K38, 60C05 (primary), and 05A18, 60D99 (secondary)}
\maketitle


\begin{abstract}
We extend the notion of jittered sampling to arbitrary partitions and study the discrepancy of the related point sets. Let $\bOmega=(\Omega_1,\ldots,\Omega_N)$ be a partition of $[0,1]^d$ and let the $i$th point in $\cP$ be chosen uniformly in the $i$th set of the partition (and stochastically independent of the other points), $i=1,\ldots,N$. For the study of such sets we introduce the concept of a uniformly distributed triangular array and compare this notion to related notions in the literature.
We prove that the expected ${\cL_p}$-discrepancy, $\E {\cL_p}(\cP_\bOmega)^p$, of a point set $\cP_\bOmega$ generated from any equivolume partition $\bOmega$ is always strictly smaller than the expected ${\cL_p}$-discrepancy of a set of $N$ uniform random samples for $p>1$.
For fixed $N$ we consider classes of stratified samples based on equivolume partitions of the unit cube into convex sets or into sets with a uniform positive lower bound on their reach. It is shown that these classes contain at least one minimizer of the expected    ${\cL_p}$-discrepancy. We illustrate our results with explicit constructions for small $N$.
In addition, we present a family of partitions that seems to improve the expected discrepancy of Monte Carlo sampling by a factor of 2 for every $N$.
\end{abstract}


\section{Introduction}

\subsection{Setting and main questions}
Given a finite set $\cP = \left\{ \bx_1, \dots, \bx_N\right\}$ of $N$ points in $[0,1]^d$ one way to quantify how well-spread these points are, is to calculate the $\cL_p$-discrepancy 
$$ \cL_{p} (\cP) := \left( \int_{[0,1]^d} \left| \frac{\#\left(\cP\cap[0, {\bv x}[\right)}{N} - \big|[0, \bv x[\big| \right|^p \dd\bv x \right)^{1/p}, $$
of $\cP$, in which $1\leq p < \infty$, 
$\#\left(\cP\cap[0, {\bv x}[\right)$ counts the number of indices $1\leq i \leq N$ such that $\bx_i \in [0, {\bv x}[$,
and  $\big|[0, \bv x[\big|$ is the Lebesgue measure of 
$[0,\bv x[ :=\prod_{k=1}^d [0,x_k[$
with $\bv x = (x_1, \ldots, x_d)$; i.e. the $\cL_p$ norm of the so-called discrepancy function.
For an infinite sequence $\cS$ the $\cL_p$-discrepancy $\cL_{p}(\cS_N)$ is the $\cL_p$-discrepancy
of the first $N$ elements, $\cS_N$, of $\cS$.
Another important irregularity measure is the star-discrepancy defined as
$$
D^* (\cP) :=  \sup_{\bv x\in [0,1]^d} \left | \frac{\#\left(\cP\cap[0, {\bv x}[\right)}{N} - \big|[0, \bv x[\big| \right |.
$$
The $\cL_2$-discrepancy is a well studied and understood measure for the irregularities of point sets. We refer to the book \cite{DP} and the excellent survey \cite{DP2} for further details. In particular, and in contrast to other measures such as the star-discrepancy, it is known how to construct deterministic point sets with the optimal order of magnitude of the $\cL_2$-discrepancy; see \cite{chen1,DP2,DP3}. 
For $d=2$ the optimal order of the $\cL_2$-discrepancy for finite point sets is known to be $\mathcal{O}(\sqrt{\log N}/N)$, which already goes back to a result of Davenport \cite{dave}.
The optimality of these constructions follows from a seminal result of Roth \cite{roth} who derived a general lower bound for the $\cL_2$-discrepancy of arbitrary sets of $N$ points in $[0,1]^d$; see e.g. \cite[Theorem 3.20]{DP}.
While deterministic point sets with small discrepancy are widely used in the context of numerical integration, simulations of different real world phenomena may require an element of randomness. The expected discrepancy of a set $\cP_N$ of $N$ i.i.d.~uniform random points in $[0,1]^d$ is of order $\mathcal{O}(1/\sqrt{N})$ and as such independent of the dimension.
For two-dimensional point sets of $N$ i.i.d.~uniform random points, we thus also have an expected discrepancy of order $\mathcal{O}(1/\sqrt{N})$ similar to the two-dimensional regular grid (whose discrepancy is known to get worse as the dimension increases).

Randomized quasi-Monte Carlo (RQMC) sampling is a popular method to randomize deterministic point sets; see \cite{ecu1} for an excellent introduction. Clever constructions of deterministic point sets, so called quasi-Monte Carlo (QMC) sampling can significantly improve the asymptotic order of integration errors when compared to classical Monte Carlo sampling. RQMC basically takes a deterministic QMC point set as an input and uses a randomisation technique (e.g. a random shift modulo 1 or a so-called digital shift) to generate a new point set, which can be shown to have improved uniform distribution properties compared to Monte Carlo samples, while still enjoying the advantages of being `random' in theoretical analysis; see \cite{cranley,ecu2,haber,owen1,owen2,owen3}.

\begin{center}
\begin{figure}[h!]
\centering
\begin{tikzpicture}[scale=0.65]
\draw[step=1cm,gray,very thin] (0,0) grid (5,5);
\draw[step=1cm,gray,very thin] (8,0) grid (13,5);
\draw[gray,very thin] (16,0) -- (21,0) -- (21,5) -- (16,5)--(16,0);
\node at (0,0) {$\bullet$}; 
\node at (0,1) {$\bullet$}; 
\node at (0,2) {$\bullet$}; 
\node at (0,3) {$\bullet$}; 
\node at (0,4) {$\bullet$}; 

\node at (1,0) {$\bullet$}; 
\node at (1,1) {$\bullet$}; 
\node at (1,2) {$\bullet$}; 
\node at (1,3) {$\bullet$}; 
\node at (1,4) {$\bullet$}; 

\node at (2,0) {$\bullet$}; 
\node at (2,1) {$\bullet$}; 
\node at (2,2) {$\bullet$}; 
\node at (2,3) {$\bullet$}; 
\node at (2,4) {$\bullet$}; 

\node at (3,0) {$\bullet$}; 
\node at (3,1) {$\bullet$}; 
\node at (3,2) {$\bullet$}; 
\node at (3,3) {$\bullet$}; 
\node at (3,4) {$\bullet$}; 

\node at (4,0) {$\bullet$}; 
\node at (4,1) {$\bullet$}; 
\node at (4,2) {$\bullet$}; 
\node at (4,3) {$\bullet$}; 
\node at (4,4) {$\bullet$}; 

\node at (8.31,0.23) {$\bullet$}; 
\node at (8.7,1.5) {$\bullet$}; 
\node at (8.2,2.7) {$\bullet$}; 
\node at (8.3,3.34) {$\bullet$}; 
\node at (8.9,4.29) {$\bullet$}; 

\node at (9.3,0.74) {$\bullet$}; 
\node at (9.55,1.56) {$\bullet$}; 
\node at (9.1,2.83) {$\bullet$}; 
\node at (9.65,3.42) {$\bullet$}; 
\node at (9.45,4.43) {$\bullet$}; 

\node at (10.34,0.41) {$\bullet$}; 
\node at (10.67,1.66) {$\bullet$}; 
\node at (10.52,2.42) {$\bullet$}; 
\node at (10.67,3.37) {$\bullet$}; 
\node at (10.52,4.12) {$\bullet$}; 

\node at (11.5,0.82) {$\bullet$}; 
\node at (11.7,1.58) {$\bullet$}; 
\node at (11.3,2.21) {$\bullet$}; 
\node at (11.9,3.14) {$\bullet$}; 
\node at (11.7,4.72) {$\bullet$}; 

\node at (12.23,0.63) {$\bullet$}; 
\node at (12.56,1.61) {$\bullet$}; 
\node at (12.89,2.25) {$\bullet$}; 
\node at (12.17,3.57) {$\bullet$}; 
\node at (12.67,4.19) {$\bullet$}; 

\node at (17.2039, 0.0104) {$\bullet$}; 
\node at (19.2219, 3.8681) {$\bullet$}; 
\node at (19.1364, 3.0262) {$\bullet$}; 
\node at (19.904, 1.9185) {$\bullet$}; 
\node at (16.5854, 2.6663) {$\bullet$}; 

\node at (18.7159, 2.4971) {$\bullet$}; 
\node at (19.8897, 2.0048) {$\bullet$}; 
\node at (20.8751, 0.3469) {$\bullet$}; 
\node at (18.674, 4.25) {$\bullet$}; 
\node at (20.1753, 4.479) {$\bullet$}; 

\node at (16.8948, 3.299) {$\bullet$}; 
\node at (19.6393, 3.4874) {$\bullet$}; 
\node at (18.7296, 3.1135) {$\bullet$}; 
\node at (17.2053, 3.8576) {$\bullet$}; 
\node at (20.8653, 3.7005) {$\bullet$}; 

\node at (18.2948, 1.2544) {$\bullet$}; 
\node at (18.9047, 0.153) {$\bullet$}; 
\node at (16.0157, 1.0807) {$\bullet$}; 
\node at (18.3935, 0.1798) {$\bullet$}; 
\node at (19.9259, 3.6681) {$\bullet$}; 

\node at (16.8825, 0.7463) {$\bullet$}; 
\node at (16.3982, 0.0242) {$\bullet$}; 
\node at (16.1357, 1.9686) {$\bullet$}; 
\node at (19.0286, 3.2103) {$\bullet$}; 
\node at (17.0463, 0.3119) {$\bullet$}; 
\end{tikzpicture}
\caption{The regular grid, a point set obtained by classical jittered sampling and a set of i.i.d uniform random points with $N=25$.} \label{fig:def}
\end{figure}
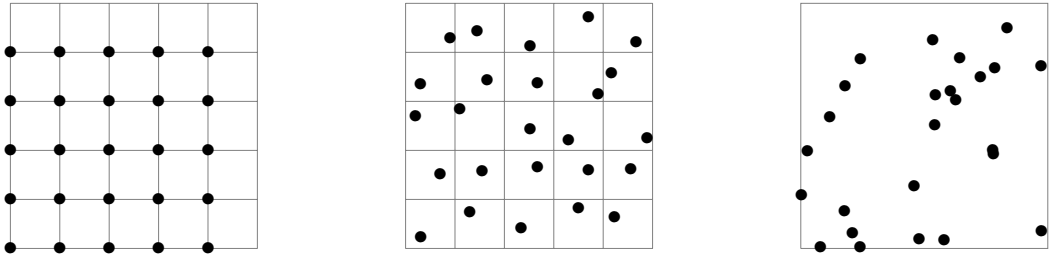
\end{center}

The starting point for our work, can also be considered as a basic RQMC technique which was already discussed in \cite{haber} in a slightly more general form.
Classical \emph{jittered sampling} for $N=m^d$ combines the simplicity of grids with uniform random sampling by partitioning $[0,1]^d$ into $m^d$ axis-aligned congruent cubes and placing a random point inside each of them; see Fig. \ref{fig:def}. Jittered sampling is sometimes referred to as `stratified sampling' in the literature, but we will use the term `stratified sampling' in a more broad sense as outlined below.
Motivated by recent progress \cite{stefan1, stefan2} the aim of this paper is to take a systematic look at 
jittered sampling and its extension based on more general partitions $\bOmega=(\Omega_1,\ldots,\Omega_N)$ of $[0,1]^d$. 
We consider \emph{stratified sampling}, where $[0,1]^d$ is partitioned into $N$ subsets $\Omega_1,\ldots,\Omega_N$  and the $i$th point in $\cP$ is chosen uniformly in the $i$th set of the partition (and stochastically independent of the other points), $i=1,\ldots,N$. 
If $N=m^d$ and the partition consists of the above mentioned axis-aligned congruent cubes, we obtain \emph{jittered sampling} as a special case. Besides results for fixed $N$, we are also interested in the behavior of stratified samples derived from sequences of partitions when $N$ becomes large. 

At this point, we would like to emphasize that sequences of partitions that can be used in stratified sampling are more general than those in Kakutani's splitting procedure and its variants \cite{kakutani,pyke,volcic}. 
Apart from the obvious difference that these procedures restrict considerations to $d=1$, the partitions in the present paper need not be nested. This means that the partition in step $N+1$ is not necessarily obtained as a refinement of the partition in step $N$; see also the discussion in  Appendix A.

It was shown in \cite{stefan1} that the asymptotic order of the star-discrepancy of a point set obtained from  jittered sampling is $\mathcal{O}(N^{-\frac{1}{2}-\frac{1}{2d}})$. Thus, taking partitions can significantly improve the expected discrepancy of (random) point sets in \emph{small} dimensions $d\geq 2$. 
We are interested in the following main questions:
\begin{enumerate}
\item In which sense are sequences of stratified sample points \emph{uniformly distributed} as their number $N$ increases? What is the connection to similar notions for partitions in the literature? 	
\item Does stratified sampling yield smaller or larger mean discrepancy than Monte Carlo sampling with $N$ 
i.i.d.~uniform random points? Are there  discrepancy notions and assumptions assuring that stratified sampling is strictly better? 	
\item Is there a 'best' partition for a given $N$ in terms of a chosen mean discrepancy? 
\item Is there a simple family of partitions $\{\bOmega^{(N)}\}_{N\geq 1}$ that gives reasonable results for all $N$ and not just for square numbers of points as in the case of classical jittered sampling?
\item Can we improve classical jittered sampling with stratified sampling?
\end{enumerate}
Section \ref{sect:2}  presents our answers to the above together with open questions for future research. In Section \ref{sect:3} we prove our main theoretical results and illustrate them with examples. Section \ref{sect:4} introduces and explores an infinite family of partitions and contains more examples as well as numerical results.

\subsection{Stratifications and the star discrepancy} \label{sect:extra}
By the celebrated result of Heinrich, Novak, Wasilkowski \& Wozniakowski \cite{hnww} there exists
a set of $N$ points in $[0,1]^d$ with
\begin{equation}
	\label{HNWW} 
	D^*(\mathcal{P}) \leq c \sqrt{\frac{d}{N}} \qquad \mbox{for some universal constant}~c.
\end{equation} 
Aistleitner \cite{aisti}, using a result of Gnewuch \cite{gnew}, has shown that one can take $c=10$. Doerr \cite{dorr} has shown 
$\mathbb{E}  D^*(\mathcal{P}) \gtrsim \sqrt{d/N}$ for point sets of $N$ i.i.d. uniformly random points indicating that this is the correct order of magnitude. 
See \cite{gnewuch} and references therein for the most up-do-date history of improvements on the constant $c$; the currently smallest value is $c=2.4968$ derived in \cite[Corollary 3.6]{gnewuch}
in which it is also shown that \eqref{HNWW} with $c=3$ holds with very high probability when $\cP$ is a set of $N$ i.i.d. uniformly random points.

Since the best known construction is purely probabilistic, it is natural to ask whether we can improve upon these upper bounds using stratification. Indeed, Aistleitner muses in \cite{aisti} that a \emph{thought-out partition} may improve the upper bound. 
Our strong partition principle (Theorem \ref{thm1}) shows that the mean $\cL_{p}$-discrepancy of stratified sets is strictly smaller than the mean $\cL_{p}$-discrepancy of $N$ i.i.d uniform random points and this could lead to a similar result for the star discrepancy using the technique from \cite{hnww}; see also \cite{nied}. 
However, the main obstacle in this context is that in order to see the stratification effect in large dimensions, one needs to subdivide the unit cube into (exponentially in d) many subsets and hence one faces a seemingly unavoidable difficulty if one wishes for a result for \emph{small} $N$ in \emph{large} dimension. This is also underlined by the discussion on classical jittered sampling in \cite[Section 6]{stefan1}, in which it is detailed why jittered sampling gains in effectiveness over purely random points only around $N \sim d^d$.
We believe that stratifications are most useful in \emph{small} dimensions in which the stratification effect is significant. 

\begin{question}
In which range of $d$ is the stratification effect most significant?
\end{question}

We will illustrate the potential of stratifications in the context of star discrepancy with numerical experiments. For fixed (and small) $d$ we expect that it is possible to improve the constant for a uniformly at random scheme with a stratified scheme similar to the case of the $\cL_{p}$-discrepancy. For $d=2,3,5$ we numerically obtain improvements for the families of partitions studied in this paper; see Table \ref{table:discrepancy}. 



\section{Results} \label{sect:2}
\subsection{Stratified sampling and uniform distribution.}\label{sec2.1}
Let $d\ge 1$ be given. 
We consider partitions $\bOmega=\{ \Omega_1, \ldots,$ $ \Omega_N\}$ of the unit cube in $\R^d$ into $N$ Lebesgue-measurable sets, i.e.
 \begin{align}\label{eq:cover}
[0,1]^d=\bigcup_{i=1}^N \Omega_i,
\end{align}
and the sets do not overlap in the $L^1$-sense, so $\Omega_i\cap \Omega_j$ is a Lebesgue-null set for all $i\ne j$ in $\{1,\ldots,N\}$. It should be emphasized that we prefer this condition to the stronger one requiring pairwise disjoint sets, as we later want to work with closed sets. When the sets $\Omega_i$ and $\Omega_j$ are convex, then $|\Omega_i\cap \Omega_j|=0$ is equivalent to saying that $\Omega_i$ and $\Omega_j$ do not have any interior points in common.
For the moment, we pose no other geometric conditions on the partition, in particular the sets $\Omega_i$ need not be connected.  
\begin{figure}[h!]
	\centering
	\begin{tikzpicture}[scale=0.6]
		\draw[thick,fill=gray!25] (0,0) -- (0,2) -- (4.5,2) -- (4.5,0) -- (0,0);
		\draw[thick] (0,0) -- (7,0)-- (7,7) -- (0,7)-- (0,0);
		
		\draw[thick] (1,7) -- (1,0);
		\draw[thick] (2,7) -- (2,0);
		\draw[thick] (3,7) -- (3,0);
		\draw[thick] (4,7) -- (4,0);
		\draw[thick] (5,7) -- (5,0);
		\draw[thick] (6,7) -- (6,0);
		
		
		\node at (0,2) {$\bullet$}; 
		\node at (4.5,2) {$\bullet$}; 
		\node at (4.5,0) {$\bullet$}; 
		
		\node at (-0.5,2) {\footnotesize $y$}; 
		\node at (4.5,-0.5) {\footnotesize $x$}; 

	\end{tikzpicture}
	\qquad
	\qquad
	\begin{tikzpicture}[scale=1.4]
		
		\draw[thick] (0,0) -- (3,0)-- (3,3) -- (0,3)-- (0,0);
		\draw[dashed] (0,0) -- (3,3);
		
		\draw[thick] (0,1.73) -- (1.73,0);
		\draw[thick] (0,2.45) -- (2.45,0);
		\draw[thick] (0,3) -- (3,0);
		\draw[thick] (3-1.73,3) -- (3,3-1.73);
		\draw[thick] (3-2.45,3) -- (3,3-2.45);

		\node at (1.73,-0.2) {\textcolor{white}{\footnotesize $m_1$}};

	\end{tikzpicture}
	\caption{Examples of simple partitions of the unit cube in $\R^2$. Left: A partition of $[0,1]^2$ into $N=7$ vertical strips. Right: Illustration of the partition $\bOmega^{(6)}_{\ast}$ consisting of  $N=6$ equivolume slices that are orthogonal to the diagonal.} \label{fig:simplePartition}
\end{figure}
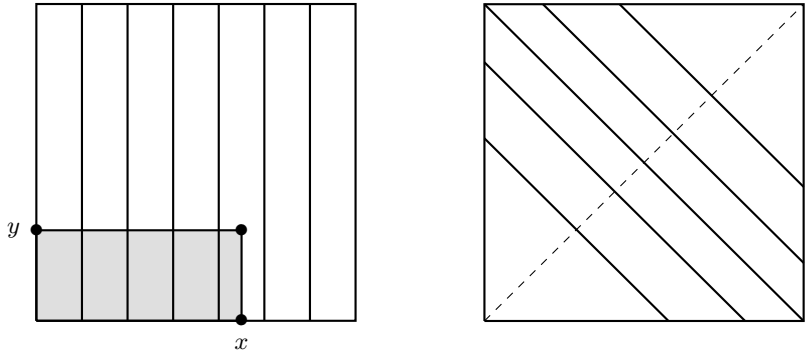

Any partition gives rise to an $N$-element \emph{stratified sample} $\cP_{\bOmega}$ of $N$ random points derived from the partition by picking a random uniform (random w.r.t.~the normalized Lebesgue measure) point from each $\Omega_i$ in a stochastically independent manner. In contrast to stratified sampling in classical sampling theory (see, e.g.~\cite{thompson}), we sample only one point in each of the strata.  
 As ground model for comparison we often will consider the set of \emph{Monte Carlo}  samples $\cP_N$ consisting of $N$ i.i.d.~(independent and identically distributed) uniform random points in the unit cube.

For a set $\cP\subset [0,1]^d$ of $N\in \N$ points in the unit cube let
\begin{align}\label{eq:Zx}
Z_\bx(\cP)=\frac{\#\big({\mathcal P}\cap [0,\bx[\,\big)}{N}
\end{align} 
be the proportion of points falling in a test cube $[0,\bx[$ with $\bx\in [0,1]^d$. 
For a Monte Carlo sample,  $Z_\bx(\cP_N)$ is a random variable with mean $|[0,\bx[|$, but $Z_\bx(\cP_\bOmega)$ need not be unbiased for $|[0,\bx[|$ when $\bOmega$ is a partition of the unit cube. 
We show in Proposition \ref{prop:1} below  that $Z_\bx(\cP_\bOmega)$ has mean $|[0,\bx[|$ for all $\bx\in [0,1]^d$
if and only if the partition is \emph{equivolume}, that is, if $|\Omega_1|=\cdots=|\Omega_N|$. 
This corresponds to the concept of self-weighting stratifications in classical sampling of finite populations: as the samples in all strata are equally large (just one point per stratum), the strata must be equal in size. 
The assumption of equivolume partitions is often convenient, as it allows us to interpret the mean of $\cL_p^p(\cP_{\bOmega})$ as an integrated centered $p$th mean; see Equations \eqref{eq:pthcnteredmean}  and \eqref{eq:MC}, below. Two examples of equivolume partitions for $d=2$ are illustrated in Fig. \ref{fig:simplePartition}.

Now let $\{\bOmega^{(N)}\}_{N\geq 1}$ be a sequence of finite partitions of the unit cube and let $\cP_{\bOmega^{(N)}}=\{\bX_1^{(N)},\ldots,\bX_N^{(N)}\}$ be the stratified sample associated to the $N$th partition. Note that we use capital letters whenever points are random. 
The fact that partitions for different $N$ need not be related to one another implies that the set of all sampling points forms a triangular array, and we are thus led to define a uniform distribution property for those; see also \cite[Section 3]{persi}.

\begin{definition}\label{def1}
	A triangular array $\widehat{\bx}=\big(\bx_1^{(N)},\ldots,\bx_N^{(N)}\big)_{N\in \NN}$ with points in $[0,1]^d$ is said to be \emph{uniformly distributed}, if for every cube $[{\bf x}, {\bf y}[ \subset [0,1[^d$ we have
	\begin{align}
\label{eq:Def1}
	\lim_{N \rightarrow \infty}  \frac{\#\left(\{\bx_1^{(N)},\ldots,\bx_N^{(N)}\}\cap[\bx, \by[\right)}{N}  = \big|[\bf x, \bf y[\big|. 
	\end{align}
	%
\end{definition}

A sequence $(\bx_i)$  in the unit cube is uniformly distributed in the usual sense, if and only if the 
triangular array $(\bx_1,\ldots,\bx_N)_{N\in\NN}$ is uniformly distributed in the sense of Definition \ref{def1}. Hence, this definition generalizes the usual one. As in the classical case, uniform distribution of triangular arrays is equivalent to the weak convergence of the sequence of 'empirical distributions', where the $N$th of those distributions sits on the points $\bx_1^{(N)},\ldots,\bx_N^{(N)}$ giving equal mass to each of them. In other words,  
$(1/N) \sum_{i=1}^N f(\bx_i^{(N)})\to \int_{[0,1]^d}f(\bx)\dd\bx$, as $N\to\infty$, for all continuous functions $f$ on the unit cube.  

Proposition \ref{prop:2} in Appendix A characterizes partitions leading a.s.~to uniformly distributed stratified samples using the strong law of large numbers for triangular arrays. 
	The most important implication of Proposition \ref{prop:2} is  that stratification sequences of \emph{equivolume} partitions are uniformly distributed. This is one reason why our theoretical results are based on equivolume partitions. 
	Appendix A also discusses how Definition \ref{def1} relates to similar concepts in the existing literature.

\subsection{The strong partition principle for stratified sampling.}\label{sec2.2}
Discrepancy measures can be used to compare sets of sampling points. In the case of a set $\cP$ of random sampling points the \emph{mean $\cL_p$-discrepancy} $\E\cL_p^p(\cP)$ is often employed, where $\E$ denotes the probabilistic expectation. One should correctly call  $\E\cL_p^p(\cP)$ the `mean $p$th power $\cL_p$-discrepancy', but we prefer the shorter, slightly misleading form for breviety. 

Certainly, a stratified sample need not be better than a Monte Carlo sample. Consider for instance a partition $\bOmega$ with $N$ sets in 
$[0,1]^2$ where the $N-1$ partitioning sets $\Omega_1,\ldots, \Omega_{N-1}$  are all subsets of $[\delta,1]^2$ with some $\delta\in ]0,1[$. Then the mean $\cL_2$-discrepancy satisfies
\begin{align*}
\E\cL_2(\cP_{\bOmega})^2&\ge\E \int_{[0,\delta]^2}\left( \frac{1_{[0,\bx[}(X^{(N)}_N)}N-|[0,\bx[|\right)^2\dd \bx
\\&=
 \frac{\delta^4}{4N}+\frac{\delta^6}{9}\left(1-\frac2N\right)\ge \frac{\delta^4}{4N}>
 \E {\cL_2}(\cP_N)^2, 
\end{align*}
for all $\delta>(5/9)^{1/4}\approx 0.86$ and $N\ge2$, where the last inequality uses \eqref{eq:MCL2}. 

In contrast to this, stratified samples from \emph{equivolume} partitions are never worse than Monte Carlo samples in terms of the mean $\cL_2$-discrepancy according to the Partition Principle in \cite[Theorem 1.2]{stefan1}.
We strengthen this result in two directions showing firstly that stratified samples from {equivolume} partitions are \emph{strictly} better, and secondly that $\cL_2$-discrepancy can be replaced by 
$\cL_p$-discrepancy with arbitrary $p>1$. 
The main ingredient of our proof is a result by Hoeffding \cite{hoeffding} stating that 
among all Poisson-binomial distributions with given mean, the classical binomial distribution is the most spread-out. 
\begin{theorem} (Strong Partition Principle) \label{thm1}
For any equivolume partition $\bOmega$ of $[0,1]^d$ with $N\ge 2$ sets we have 
\begin{equation}
    \label{eq:Florian} 
\E {\cL_p}(\cP_\bOmega)^p< \E {\cL_p}(\cP_N)^p
\end{equation}
for all $p>1$. 
\end{theorem}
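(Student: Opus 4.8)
The plan is to reduce the integrated quantity in \eqref{eq:Florian} to a pointwise-in-$\bx$ comparison of moments and then to invoke Hoeffding's theorem. Writing $v(\bx):=|[0,\bx[|$ and applying Fubini's theorem (the integrand is bounded by $1$ and jointly measurable, so this is harmless), by the very definition of the $\cL_p$-discrepancy
\[
\E\cL_p(\cP_\bOmega)^p=\int_{[0,1]^d}\E\big|Z_\bx(\cP_\bOmega)-v(\bx)\big|^p\,\dd\bx,
\]
and likewise for $\cP_N$. It therefore suffices to compare, for a.e.\ fixed $\bx$, the two quantities $\E g(S_\bOmega)$ and $\E g(S_N)$, where $g(k):=|k/N-v(\bx)|^p$ and $S_\bOmega,S_N$ denote the counts $\#(\,\cdot\,\cap[0,\bx[)$ for the stratified and Monte Carlo samples, respectively.

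Fix $\bx$ and write $v=v(\bx)$. The Monte Carlo count $S_N$ is $\mathrm{Bin}(N,v)$, while $S_\bOmega$ is a sum of independent Bernoulli variables whose $i$th parameter is $|\Omega_i\cap[0,\bx[|/|\Omega_i|=N|\Omega_i\cap[0,\bx[|=:p_i$ (here the equivolume assumption $|\Omega_i|=1/N$ is used), so $S_\bOmega$ is Poisson--binomial. Because $\bOmega$ partitions the cube, $\sum_i p_i=N\sum_i|\Omega_i\cap[0,\bx[|=Nv$, so $S_\bOmega$ and $S_N$ share the mean $Nv$. As $g$ is convex for $p\ge1$, Hoeffding's theorem \cite{hoeffding}---that the ordinary binomial maximizes $\E g(S)$ among all Poisson--binomial laws on $\{0,\dots,N\}$ of a given mean---gives $\E g(S_\bOmega)\le\E g(S_N)$ pointwise, and integrating yields the non-strict form of \eqref{eq:Florian}.

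The heart of the argument is strictness, and this is where the hypothesis $p>1$ enters: for $p>1$ the function $t\mapsto|t/N-v|^p$ is strictly convex. I would invoke the strict form of Hoeffding's averaging step---replacing an unequal pair $(p_i,p_j)$ by $\big(\tfrac{p_i+p_j}2,\tfrac{p_i+p_j}2\big)$ preserves the mean but strictly increases $\E g(S)$ when $g$ is strictly convex---so that iterating toward the binomial yields $\E g(S_\bOmega)<\E g(S_N)$ whenever the $p_i$ are not all equal. (Equivalently, equality forces the Poisson--binomial law to equal $\mathrm{Bin}(N,v)$, which by matching probability generating functions forces $p_i=v$ for all $i$.) Thus the pointwise inequality is strict at every $\bx$ for which $p_1(\bx),\dots,p_N(\bx)$ are not all equal.

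Finally I must check that this strictness set has positive Lebesgue measure; then a pointwise-$\le$ integrand that is strictly $<$ on a set of positive measure integrates to a strict inequality, giving \eqref{eq:Florian}. Suppose instead that $p_1(\bx)=\dots=p_N(\bx)$ for a.e.\ $\bx$, i.e.\ $|\Omega_i\cap[0,\bx[|=\tfrac1N|[0,\bx[|$ for every $i$ and a.e.\ $\bx$. Then the measures $\1_{\Omega_i}(\bz)\,\dd\bz$ and $\tfrac1N\,\dd\bz$ agree on all lower boxes $[0,\bx[$, which generate the Borel $\sigma$-algebra, so $\1_{\Omega_i}(\bz)=\tfrac1N$ for a.e.\ $\bz$---impossible for $N\ge2$ since an indicator takes only the values $0$ and $1$. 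Hence the $p_i(\bx)$ are not all equal on a set of positive measure. The only genuinely delicate point is the strict version of Hoeffding's inequality; granting that (together with the elementary density argument just given), the restriction $p>1$ is precisely what provides the strict convexity that is absent at $p=1$.
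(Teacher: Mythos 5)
Your proposal is correct and follows essentially the same route as the paper's own proof: reduce via Fubini/Tonelli to a pointwise comparison of a Poisson--binomial count against a binomial count with the same mean, invoke Hoeffding's theorem (including its equality case, which the paper also takes from \cite{hoeffding}) to get the pointwise inequality with strictness unless all success probabilities $q_i(\bx)$ coincide, and rule out the degenerate case by observing that $|\Omega_i\cap[0,\bx[|=\tfrac1N|[0,\bx[|$ for a.e.\ $\bx$ would force $\1_{\Omega_i}=1/N$ a.e., which is impossible for $N\ge2$. The only cosmetic difference is that the paper phrases the last step through its Lemma \ref{lem1A} and centered moments $\M_p$, while you argue directly with the measures on lower boxes; the substance is identical.
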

The proof of this theorem will be given in  Section \ref{sec3.2}. One can understand \eqref{eq:Florian} as a continuous analog and extension to the statement in finite population sampling theory that self-weighted stratified sampling is always better (in terms of variance) than simple random sampling, both taken with replacement.

For illustration, we consider the sequence $(\bOmega^{(N)}_{\mathrm{vert}})$ of vertical strip partitions; see  Fig.~\ref{fig:simplePartition} (left), but generalized to the $d$-dimensional case. Direct calculation confirms
\begin{align*}
\E {\cL_2}(\cP_{\mathrm{vert}})^2 <\E {\cL_2}(\cP_N)^2
\end{align*}
for all $N\ge 2$. Both sampling schemes have the same asymptotic order $1/N$, but stratified sampling has a better leading constant; see Section \ref{sec3.2} for details.

\subsection{Partitions with best average discrepancy.}\label{subsect:2.3}
For a given $N\ge 2$, it is an open problem to assure the existence of  a partition 
whose associated stratified sample has lowest mean discrepancy among all partitions consisting of  $N$ sets. 
We will show such existence results for certain \emph{equivolume} partitions. These results are all based on the topological standard argument that a continuous function attains its minimum on a compact set. This requires the choice of a topology on the family $\cC$ of compact subsets of $[0,1]^d$. We have chosen the well-established Hausdorff metric, as $\cC$ is compact in its generated topology. However, both, the extension of this compactness to partitions and the continuity claim of  $\E {\cL_p}(\cP_{\bOmega})^p$ as a function of the partitioning sets, require the continuity of the volume functional. We will assure this by assuming certain regularity conditions. More precisely, we assume that there is $r>0$ such that the sets $\Omega_1,\ldots,\Omega_N\subset [0,1]^d$ of the partition have \emph{reach} at least $r$, meaning that for any point $\bx$ with distance less than $r$ from $\Omega_i$ there is a \emph{unique} closest point to $\bx$ in $\Omega_i$, $i=1,\ldots,N$; see Fig. \ref{fig:reach}. The class of such sets is very general and contains for instance all compact convex sets in $[0,1]^d$ (as the reach of a closed convex set is infinity). It also contains any given set whose boundary is a piecewise $C^2$-curve such that its finitely many vertices are 'convex', provided that $r>0$ is chosen small enough. Let $\mathfrak{P}_N(r)$ be the class of all \emph{equivolume} partitions consisting of sets with reach at least $r>0$.

Also smaller classes of partitions can be treated. We name here the class $\mathfrak{P}_N^{\mathrm{conv}}$  of \emph{equivolume} convex partitions, which might be relevant for applications, as all sets constituting a convex partition of $[0,1]^d$ are actually convex polytopes, with the total number of vertices being uniformly bounded when $N$ is given. Hence, convex partitions can be described using finitely many parameters, and thus they are, at least in principle, computationally tractable. 
The following main result states the existence of equivolume  partitions yielding the best  mean discrepancy $\E\Delta$ from stratification under regularity. Note that the assumptions on the function $\Delta$, which is some given measure of discrepancy, are very weak.

\begin{theorem}\label{thm:existenceReach}
Let $r>0$ and $N\in \NN$ be given and assume that $\Delta:([0,1]^d)^N\to \R$ is measurable and bounded. Then there exists (at least) one partition $\bOmega^* \in \mathfrak{P}_N(r)$ such that the corresponding stratified sample $\cP_{\bOmega^*}$ minimizes the mean $\Delta$-discrepancy on $\mathfrak{P}_N(r)$; i.e.
		$$
		\min_{\bOmega \in \mathfrak{P}_N(r)} \E \Delta(\cP_\bOmega) = \E \Delta(\cP_{\bOmega^*}).
		$$
	A corresponding statement holds true for $\mathfrak{P}_N^{\mathrm{conv}}$.
\end{theorem}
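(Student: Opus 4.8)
The plan is to realize $\mathfrak{P}_N(r)$ as a compact subset of a space of tuples of sets and to show that $\bOmega\mapsto\E\Delta(\cP_\bOmega)$ is continuous on it; the existence of a minimizer then follows from the extreme value theorem (a continuous function attains its infimum on a non-empty compact set). Concretely, I would replace each $\Omega_i$ by its closure and identify a partition with the tuple $(\Omega_1,\ldots,\Omega_N)$ living in $\cC^N$, where $\cC$ is the space of non-empty compact subsets of $[0,1]^d$ equipped with the Hausdorff metric $d_H$. By the Blaschke selection theorem $\cC$ is compact, hence so is $\cC^N$ in the product topology, and every sequence of partitions admits a subsequence whose $N$ members converge individually in $d_H$.

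Next I would verify that the defining constraints cut out a closed (hence compact) subset of $\cC^N$. Three observations suffice. First, since finite unions depend continuously on their members in $d_H$ and $\bigcup_i\Omega_i^{(k)}=[0,1]^d$ is constant, the covering condition $\bigcup_i\Omega_i=[0,1]^d$ passes to Hausdorff limits. Second, the reach bound is stable: if $\mathrm{reach}(\Omega_i^{(k)})\ge r$ and $\Omega_i^{(k)}\to\Omega_i$, then $\mathrm{reach}(\Omega_i)\ge r$. Third --- and this is the analytic heart of the matter --- the volume functional is continuous on the class of sets of reach at least $r$, so the equivolume constraint $|\Omega_i|=1/N$ is closed. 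With equivolume and covering in hand, the non-overlap condition is automatic: $\sum_i|\Omega_i|=1=|\bigcup_i\Omega_i|$ forces the pairwise intersections to be Lebesgue-null, so I need not verify it separately in the limit. Non-emptiness of $\mathfrak{P}_N(r)$ is witnessed by the vertical strip partition, so the compact set is not void.

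For continuity of the objective I would use $|\Omega_i|=1/N$ to write
\[
\E\Delta(\cP_\bOmega)=N^N\int_{([0,1]^d)^N}\Delta(\bx_1,\ldots,\bx_N)\prod_{i=1}^N \1_{\Omega_i}(\bx_i)\,\dd\bx_1\cdots\dd\bx_N.
\]
The key reduction is that Hausdorff convergence of reach-$\ge r$ sets forces $L^1$-convergence of indicators, i.e.\ $|\Omega_i^{(k)}\triangle\Omega_i|\to 0$. Indeed, the inclusion $\Omega_i^{(k)}\subseteq(\Omega_i)_{\eps_k}$ together with $|(\Omega_i)_{\eps_k}\setminus\Omega_i|\to 0$ (continuity of measure from above, as $\Omega_i$ is closed) gives $|\Omega_i^{(k)}\setminus\Omega_i|\to 0$, while the volume continuity $|\Omega_i^{(k)}|\to|\Omega_i|$ then forces $|\Omega_i\setminus\Omega_i^{(k)}|\to 0$ as well. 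A telescoping estimate (each indicator bounded by $1$ and each factor of $L^1$-norm $1/N$) upgrades this to $\prod_i\1_{\Omega_i^{(k)}}\to\prod_i\1_{\Omega_i}$ in $L^1(([0,1]^d)^N)$, and since $\Delta$ is bounded and measurable, $\E\Delta(\cP_{\bOmega^{(k)}})\to\E\Delta(\cP_\bOmega)$. Combining compactness and continuity, a minimizing sequence has a convergent subsequence whose limit $\bOmega^*\in\mathfrak{P}_N(r)$ attains the infimum.

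Finally, for the convex class $\mathfrak{P}_N^{\mathrm{conv}}$ the same scheme applies verbatim: convexity is preserved under Hausdorff limits and the volume of convex bodies is classically continuous, so $\mathfrak{P}_N^{\mathrm{conv}}$ is again closed in $\cC^N$ and the objective is continuous by the identical indicator argument. I expect the main obstacle to be the geometric-measure-theoretic input of the second step --- the stability of the reach bound under Hausdorff limits together with the continuity of the volume functional on sets of positive reach --- since it is precisely here that positive reach is indispensable: both statements fail for arbitrary measurable (or merely compact) sets, which is why the theorem is stated for $\mathfrak{P}_N(r)$ rather than for all equivolume partitions.
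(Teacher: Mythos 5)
Your proposal is correct and follows essentially the same route as the paper: identify $\mathfrak{P}_N(r)$ (respectively $\mathfrak{P}_N^{\mathrm{conv}}$) with a compact subset of the Hausdorff-metric space of compact sets, note that non-overlap is automatic from covering plus equivolume, write $\E\Delta(\cP_\bOmega)=N^N\int\Delta\prod_i \1_{\Omega_i}$, prove continuity of this functional via parallel-set bounds on the symmetric differences of indicators together with a telescoping product estimate, and conclude by the extreme value theorem. The only cosmetic differences are that the paper obtains compactness of $\cR_r$ directly from Federer and proves Lipschitz (rather than sequential) continuity via the Steiner formula, whereas you assemble the same facts as Blaschke compactness plus stability of the reach bound and $L^1$-convergence of indicators.
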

The standard notions of discrepancy satisfy the assumptions in the above theorem; see the end of Section \ref{subsect:3.3} for details.

\begin{corollary}\label{thm:existenceConvex}
	Let $r>0$, $1\le p<\infty$, and $N\in \NN$ be given. Then there are partitions $\bOmega^p \in \mathfrak{P}_N(r)$ 
		and $\bOmega^* \in \mathfrak{P}_N(r)$ of $[0,1]^d$ such that 
		\[
		\min_{\bOmega \in \mathfrak{P}_N(r)} \E {\cL_p}(\cP_\bOmega)^p= \E {\cL_p}(\cP_{\bOmega^p})^p,
		\]
		and 	
		\[
		\min_{\bOmega \in \mathfrak{P}_N(r)} \E D^* (\cP_\bOmega) = \E D^*(\cP_{\bOmega^*}),
		\]
		respectively.
	
	Corresponding statements hold true for $\mathfrak{P}_N^{\mathrm{conv}}$.
\end{corollary}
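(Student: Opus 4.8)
The plan is to deduce the corollary directly from Theorem~\ref{thm:existenceReach}, by checking that the two discrepancy functionals in question are admissible choices for the function $\Delta$ appearing there. Writing $\cP=(\bx_1,\ldots,\bx_N)$ for a generic $N$-tuple of points in $[0,1]^d$, I regard both
$$\Delta_1(\cP):=\cL_p(\cP)^p\quad\text{and}\quad\Delta_2(\cP):=D^*(\cP)$$
as functions on $([0,1]^d)^N$. Once I show that each of them is measurable and bounded, Theorem~\ref{thm:existenceReach} applied with $\Delta=\Delta_1$ produces the minimizing partition $\bOmega^p$, and applied with $\Delta=\Delta_2$ produces $\bOmega^*$; running the same argument on the subclass $\mathfrak{P}_N^{\mathrm{conv}}$ gives the convex versions. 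Thus the corollary reduces to verifying that $\cL_p^p$ and $D^*$ satisfy the hypotheses of the theorem.

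Boundedness is immediate. The local discrepancy function
$$g_\cP(\bx):=Z_\bx(\cP)-\big|[0,\bx[\big|$$
takes values in $[-1,1]$ for every $\bx$, since both $Z_\bx(\cP)\in[0,1]$ and $|[0,\bx[|\in[0,1]$. Hence $|g_\cP(\bx)|^p\le 1$, and integrating over the unit cube (which has volume one) gives $\Delta_1(\cP)=\cL_p(\cP)^p\le 1$; likewise $\Delta_2(\cP)=\sup_{\bx}|g_\cP(\bx)|\le 1$. So both functionals are bounded by $1$ uniformly in $\cP$.

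For the measurability of $\Delta_1$ I would write $\#(\cP\cap[0,\bx[)=\sum_{i=1}^N \1_{[0,\bx[}(\bx_i)$ and note that the map $(\bx_1,\ldots,\bx_N,\bx)\mapsto \1_{[0,\bx[}(\bx_i)$ is jointly Borel measurable, as the condition $\bx_i\in[0,\bx[$ is a conjunction of $d$ open coordinate inequalities. Therefore the integrand $(\bx_1,\ldots,\bx_N,\bx)\mapsto|g_\cP(\bx)|^p$ is jointly measurable, and by Tonelli's theorem its integral over $\bx\in[0,1]^d$ is a measurable function of the points; that is, $\Delta_1$ is measurable.

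The measurability of $\Delta_2=D^*$ is the only genuinely delicate point, since it involves a supremum of $|g_\cP|$ over the uncountable index set $[0,1]^d$. Here I would invoke the standard reduction of the star-discrepancy to a finite candidate set: each of the one-sided suprema $\sup_\bx g_\cP(\bx)$ and $\sup_\bx(-g_\cP(\bx))$ is attained among those $\bx$ whose $k$th coordinate lies in the finite set $\{x_{1,k},\ldots,x_{N,k},1\}$, with the point count taken over closed boxes for the upper envelope and over open boxes for the lower one. Parametrizing these candidates by finitely many index functions $\sigma\colon\{1,\ldots,d\}\to\{0,1,\ldots,N\}$ exhibits $D^*(\cP)$ as a maximum, over finitely many $\sigma$, of expressions built from the measurable counting functions and the continuous volume functional evaluated at points $\bx^\sigma$ that depend measurably (indeed continuously) on $\cP$. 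A maximum of finitely many measurable functions is measurable, so $\Delta_2$ is Borel. I expect this reduction, together with the careful bookkeeping of open versus closed test boxes that it requires, to be the main obstacle, while the remaining steps are routine. With measurability and boundedness of both $\Delta_1$ and $\Delta_2$ established, Theorem~\ref{thm:existenceReach} applies verbatim on $\mathfrak{P}_N(r)$ and on $\mathfrak{P}_N^{\mathrm{conv}}$, which completes the proof.
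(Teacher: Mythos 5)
Your proposal is correct and follows essentially the same route as the paper: both deduce the corollary from Theorem~\ref{thm:existenceReach} by checking that $\cL_p(\cdot)^p$ and $D^*(\cdot)$ are bounded and measurable as functions on $([0,1]^d)^N$. The only immaterial differences are that the paper observes $\cL_p^p$ is even continuous (via dominated convergence) where you verify measurability directly through Tonelli's theorem, and your finite-candidate-set argument for the measurability of $D^*$ spells out in detail what the paper dismisses as following ``directly from the definition.''
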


For illustration we will determine the optimal convex partition of $[0,1]^2$ for $N=2$ in Subsection \ref{sec:equivolConvexN=2}.

Theorem \ref{thm:existenceReach} and its proof  in Section \ref{subsect:3.3} indicate that the properties of the discrepancy are only of marignal importance while the regularity assumptions on the partitioning sets are crucial in order to show the continuity and compactness. Generally speaking, for such existence statements to hold, we expect that these regularity conditions are not needed, that is, we expect the existence of an equivolume partition of $[0,1]^d$ minimizing a given (rather general) measure of discrepancy. The reasoning for this conjecture is the surmise that minimizers are typically consisting of regular sets. To illustrate this point consider  the simple case $d=1$, $N=2$. Clearly, the class of equivolume partitions contains very complicated 
	pairs of sets, such as fractals, and it is certainly not closed in the Hausdorff-metric nor in the induced $\cL_1$-metric for indicator functions. But Corollary \ref{CORnew} in Section \ref{sec3.2}, shows that the unique
	mean-$\cL_2$-disrepancy minimizing equivolume partition in this case is simply $\{[0,1/2],[1/2,1]\}$ 
	(up to sets of measure zero), which consists of very regular sets.

We cannot claim that the equivolume assumption is needed, but it is crucial for our approach, as it avoids that sets in partition sequences shrink to lower-dimensional sets. Existence statements without the equivolume assumption, though very interesting, would require thus substantially different techniques and are beyond the scope of the present paper.

\begin{figure}[h!]
\centering
\scalebox{0.7}{
\begin{tikzpicture}
\coordinate (center) at (0,0);
\coordinate (center1) at (3,0);
\coordinate (center11) at (1.5,1.82);
\cercle{center1}{2cm}{136}{-272}
\cercle{center}{2cm}{44}{272}
\cercleD{center}{2cm}{44}{-88}
\cercleD{center1}{2cm}{136}{88}


\draw [thick, fill=gray!30]  plot[smooth, tension=0.8, ] coordinates { (7,1.5) (7.5, 1.4) (9,1) (11,1.5) (10.7,0)  (11,-1.5)  (9,-0.8) (7,-1.5) (7,0) (6.7,1.2) (7,1.5)};







\end{tikzpicture}
}
\caption{Left: This union of two circles is not of positive reach. Right: A set of positive reach.} \label{fig:reach}
\end{figure}
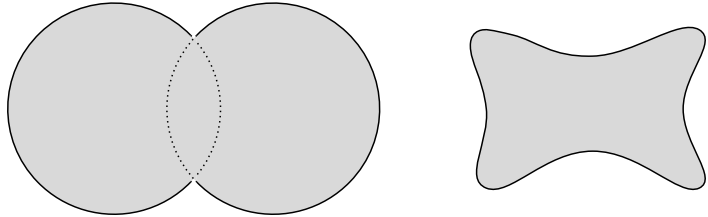

\subsection{Explicit stratification strategies for arbitrary $N$.}\label{subsect:2.4}
Next, we suggest and motivate a general and versatile construction of partitions for arbitrary $N$. 
We define partitions of the unit square generated by parallel lines which are orthogonal to the diagonal of the square. As a special case we consider the partitions $\bOmega_{\ast}^{(N)}$ which are equivolume; see Fig.~ \ref{fig:simplePartition} (right).
In Section \ref{sec4:num} we present numerical evidence that stratified samples based on such partitions improve the expected $\cL_2$-discrepancy of an $N$-point Monte Carlo sample roughly by a factor of two. 
As a comparison, we show in Example 1 in Section \ref{example1} that samples based on vertical strip partitions improve an $N$-point Monte Carlo sample by a factor of $5/3$.

Importantly, this construction enables us also to systematically study the role of the equivolume property. 
In a first step, in Example 3 in Section \ref{sec4:ex3} we improve the minimal convex equivolume partition for $N=2$ obtained in Example 2 in Section \ref{sec:equivolConvexN=2} by shifting the separating line along the diagonal. In Section \ref{sec4:ex4} we extend this analysis to the case $N=3$. We parametrise all such partitions into three sets and determine the minimal partition among them. It turns out that these partitions into three sets have a rich and interesting global structure with respect to their expected discrepancy. 

Finally, we have examples of partitions within this family and for small $N$ that show that it is possible to improve classical jittered sampling by relaxing the equivolume constraint. 

\subsection{Conclusions and open questions}\label{subsect:2.5}
In conclusion, our results show that if partitions are needed to generate stratified samples for arbitrary $N$,
we suggest to use lines that are orthogonal to the diagonal of the unit square. Within this family it seems that equivolume partitions  $\bOmega_{\ast}^{(N)}$ are a reasonably good pick; see Section \ref{sec4:num} for details.
Secondly, our examples for $N=2,3,4$ show that the expected discrepancy can be improved if we drop the equivolume property. This is in line with the results from \cite{stefan2} and deserves further attention. It certainly relates to the well-known general observation that the $\cL_2$-discrepancy \emph{exaggerates the importance of points lying close to the origin} (see \cite[pg 13f]{mat}).
\begin{question} \label{qu1}
Are there properties of sequences of partitions, other than equivolume, that improve asymptotically the expected discrepancy of Monte Carlo sampling?
\end{question}

Our example for $N=4$ supports the idea brought forward in \cite{stefan1} that classical jittered sampling may not give the lowest expected discrepancy for large $N$.

\begin{question} \label{qu2}
Is there an infinite family of partitions that generates point sets with a smaller expected discrepancy than classical jittered sampling for large $N$?
\end{question}



\section{Proofs and examples} \label{sect:3}
\subsection{Proofs for Section \ref{sec2.1}} \label{sec3.1}
We now give proofs of the results in Subsection \ref{sec2.1} using the notations and notions introduced there. On several occasions we will need the following lemma, which essentially is a reformulation of the fact that a distribution (in the probabilistic sense) is uniquely determined by its cumulative distribution function, also in the multivariate case; see e.g. \cite[Example 1.44]{klenke}. Indeed, the proof of the following lemma is based on this fact, if the function involved is split into positive and negative part and the total integrals are normalized. 

\begin{lemma}\label{lem1A}
An integrable function  $f:[0,1]^d\to \R$ is almost everywhere determined if its integrals 
$\int_{[0,\bx]}f(\by)\dd\by$ are known for almost all $\bx\in [0,1]^d$. 

In other words, $\int_{[0,\bx]}f(\by)\dd\by=0$ for almost all  $\bx\in [0,1]^d$ implies $f(\bx)=0$ for almost all $\bx\in [0,1]^d$. 
\end{lemma}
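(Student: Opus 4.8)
The plan is to follow the hint in the preceding paragraph and reduce everything to the uniqueness of a probability measure from its distribution function. It suffices to prove the second, reformulated version: assuming that $g(\bx):=\int_{[0,\bx]}f(\by)\dd\by=0$ for almost all $\bx\in[0,1]^d$, I want to conclude that $f=0$ almost everywhere.

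First I would upgrade the hypothesis from ``almost all $\bx$'' to ``all $\bx$''. Since $f$ is integrable and $\1_{[0,\bx]}\to\1_{[0,\bx_0]}$ pointwise off the Lebesgue-null boundary as $\bx\to\bx_0$, dominated convergence shows that $g$ is continuous on $[0,1]^d$. A continuous function that vanishes almost everywhere vanishes identically, so in fact $g(\bx)=0$ for every $\bx\in[0,1]^d$.

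Next I would split $f=f^+-f^-$ into its positive and negative parts, both nonnegative and integrable. The identity $g\equiv0$ then reads $\int_{[0,\bx]}f^+\,\dd\by=\int_{[0,\bx]}f^-\,\dd\by$ for all $\bx$. Evaluating at $\bx=(1,\dots,1)$ gives $\int f^+=\int f^-=:c$. If $c=0$ then $f^+=f^-=0$ a.e.\ and we are done; otherwise I normalize and set $\mu^\pm:=c^{-1}f^\pm\,\dd\by$, two Borel probability measures on $[0,1]^d$ whose cumulative distribution functions $\bx\mapsto\mu^\pm([0,\bx])$ coincide for every $\bx$. Invoking the multivariate uniqueness of a distribution from its distribution function (\cite[Example 1.44]{klenke})---the lower boxes $[0,\bx]$ form a $\pi$-system generating the Borel $\sigma$-algebra and contain the whole space---forces $\mu^+=\mu^-$. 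Hence $f^+\,\dd\by$ and $f^-\,\dd\by$ are the same measure, uniqueness of Lebesgue densities yields $f^+=f^-$ a.e., and since $f^+f^-\equiv0$ this forces $f^+=f^-=0$ a.e., so $f=0$ a.e.

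I expect the only genuinely delicate point to be the passage from the almost-everywhere hypothesis to an everywhere statement via the continuity of $g$, which is exactly what makes the cited distribution-function uniqueness applicable; everything afterwards is the standard measure-theoretic uniqueness packaged in the reference. A slightly cleaner variant would bypass the normalization altogether, observing directly that the finite measures $f^+\,\dd\by$ and $f^-\,\dd\by$ agree on the generating $\pi$-system of lower boxes and hence coincide by the $\pi$-$\lambda$ theorem; but I would keep the probability-measure formulation to stay aligned with \cite{klenke}.
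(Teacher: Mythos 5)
Your proof is correct and takes essentially the same route the paper sketches for this lemma: splitting $f$ into positive and negative parts, normalizing the total integrals, and invoking the multivariate uniqueness of a distribution from its cumulative distribution function as in \cite[Example 1.44]{klenke}. The continuity argument upgrading the almost-everywhere hypothesis to all $\bx$ is a detail the paper leaves implicit, and you handle it correctly.
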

We are now in a position to show the announced characterization of equivolume partitions in terms of the unbiasedness of the proportions $Z_\bx(\cP)$ in \eqref{eq:Zx}. 
 
\begin{proposition}\label{prop:1}
For  a partition $\bOmega$ of $[0,1]^d$ into $N$ Lebesgue sets $\Omega_1,\ldots, \Omega_N$ of positive volume
the following three statements are equivalent.
\begin{enumerate}
	\item[(i)] $\bOmega$ is equivolume.
	\item[(ii)] $\E Z_\bx(\cP)=\big|[0,\bx]\big|$ for all $\bx\in [0,1]^d$. 
	\item[(iii)] $\E Z_\bx(\cP)=\big|[0,\bx]\big|$ for almost all $\bx\in [0,1]^d$.
\end{enumerate}
\end{proposition}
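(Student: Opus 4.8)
The plan is to prove the cyclic chain (i) $\Rightarrow$ (ii) $\Rightarrow$ (iii) $\Rightarrow$ (i), with essentially all the work in the final implication. Everything hinges on a single computation of the expectation. Writing $\cP_\bOmega=\{\bX_1,\ldots,\bX_N\}$ with $\bX_i$ uniform on $\Omega_i$ and independent, linearity of expectation together with $\PP(\bX_i\in[0,\bx[)=|\Omega_i\cap[0,\bx[|/|\Omega_i|$ gives
$$
\E Z_\bx(\cP_\bOmega)=\frac1N\sum_{i=1}^N\frac{|\Omega_i\cap[0,\bx[|}{|\Omega_i|}=\int_{[0,\bx]}g(\by)\,\dd\by,\qquad g(\by):=\frac1N\sum_{i=1}^N\frac{\1_{\Omega_i}(\by)}{|\Omega_i|},
$$
where $g$ is integrable since each $|\Omega_i|>0$, and where the half-open versus closed boundary of the test box is irrelevant under the integral sign.

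For (i) $\Rightarrow$ (ii) I would substitute $|\Omega_i|=1/N$, so that $g(\by)=\sum_{i=1}^N\1_{\Omega_i}(\by)=1$ for almost every $\by\in[0,1]^d$ by the essentially disjoint covering property \eqref{eq:cover}; hence $\E Z_\bx(\cP_\bOmega)=\int_{[0,\bx]}1\,\dd\by=|[0,\bx]|$ for \emph{every} $\bx$. The implication (ii) $\Rightarrow$ (iii) is immediate.

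The main step is (iii) $\Rightarrow$ (i). The hypothesis reads $\int_{[0,\bx]}\big(g(\by)-1\big)\,\dd\by=0$ for almost all $\bx$, and since $g-1$ is integrable, Lemma \ref{lem1A} forces $g=1$ almost everywhere. I would then exploit the partition structure: as the pairwise intersections $\Omega_i\cap\Omega_j$ are null, almost every $\by\in[0,1]^d$ lies in exactly one cell, so at such a point $g(\by)=(N|\Omega_i|)^{-1}$ with $\Omega_i$ the unique cell containing $\by$. Thus $(N|\Omega_i|)^{-1}=1$, i.e. $|\Omega_i|=1/N$, for almost every point of each $\Omega_i$.

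The only delicate point---and the step I expect to require the most care---is the bookkeeping that turns this into the \emph{individual} equalities $|\Omega_i|=1/N$ for every fixed $i$, rather than merely an average statement. This is resolved by restricting attention to $\Omega_i\setminus\bigcup_{j\ne i}\Omega_j$, which still has volume $|\Omega_i|>0$ because $|\Omega_i\cap\Omega_j|=0$ for $j\ne i$; on this positive-measure set $g$ takes the genuine constant value $(N|\Omega_i|)^{-1}$, and comparison with $g=1$ a.e. yields $|\Omega_i|=1/N$. As $i$ was arbitrary, $\bOmega$ is equivolume, closing the cycle.
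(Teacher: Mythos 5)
Your proposal is correct and follows essentially the same route as the paper: both reduce the bias $\E Z_\bx(\cP)-|[0,\bx]|$ to an integral $\int_{[0,\bx]}$ of a fixed integrable simple function (your $g-1$ is, up to the factor $N$ and a null set, the paper's $f=\sum_i u_i \1_{\Omega_i}$), invoke Lemma \ref{lem1A} to conclude that function vanishes almost everywhere, and then use the partition structure to extract $|\Omega_i|=1/N$ for each $i$. Your final bookkeeping step (restricting to $\Omega_i\setminus\bigcup_{j\ne i}\Omega_j$, which has positive measure) merely spells out what the paper compresses into the phrase ``as $\bOmega$ is a partition, $\bu=0$.''
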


\begin{proof} 
The bias is 
\begin{equation}
    \label{eq1}
\E Z_\bx{(\cP)}-|[0,\bx]|= \frac1N \sum_{i=1}^N \left[\frac{|\Omega_i\cap [0,\bx]|}{|\Omega_i|}-{N|\Omega_i\cap [0,\bx]|}\right]= \frac1N \sum_{i=1}^N u_i|\Omega_i\cap [0,\bx]|,
\end{equation}
where 
\[
u_i=\frac1{|\Omega_i|}-N,\qquad i=1,\ldots,N. 
\]
If (i) holds, the vector  $\bu=(u_1,\ldots,u_N)$ is the zero vector and the bias \eqref{eq1} vanishes for all $\bx\in [0,1]^d$. Hence, (i) implies (ii).

Clearly (ii) implies (iii), so it remains to assume (iii) and deduce (i). Assumption (iii) implies
 that \eqref{eq1} vanishes for almost all $\bx\in [0,1]^d$. This implies
$\int_{[0,\bx]} f(\by)d\by=0$ for almost all $\bx\in [0,1]^d$, where we have put 
$
 f=\sum_{i=1}^N u_i 1_{\Omega_i}.
$
Lemma \ref{lem1A} implies $f=0$ almost everywhere on $[0,1]^d$, and as $\bOmega$ is a partition, $\bu=0$. Hence the partition is equivolume. 
\end{proof}

\subsection{Proofs for Section \ref{sec2.2}} \label{sec3.2}
\begin{proof}[Proof of Theorem \ref{thm1}]
	Let $p>1$ be given. Using the variable $Z_\bx=Z_\bx(\cP)$ from \eqref{eq:Zx} and applying Tonelli's theorem we see that
	\[
	\E {\cL}_p(\cP_\bOmega)^p=\int_{[0,1]^d} \E\left(Z_\bx-|[0,\bx]|\right)^p \dd\bx,
	\]
	so the equivolume assumption and Proposition \ref{prop:1} yield
	\begin{equation}\label{eq:pthcnteredmean} 
	\E {\cL}_p(\cP_\bOmega)^p=\int_{[0,1]^d} \M_p(Z_\bx) \dd\bx. 
	\end{equation}
	Here, 
	\[
	\M_p(Y)=\E\big|Y-\E Y\big|^p
	\]
	is the $p$th centered moment of a random variable $Y$. The variable  $NZ_\bx$ is the sum of $N$ independent (but  not identically distributed) Bernoulli variables with success probabilities $q_1(\bx),\ldots,$ $q_N(\bx)$, where 
	\begin{equation}\label{eq:qi}
	q_i(\bx)=\frac{|\Omega_i\cap [0,\bx]|}{|\Omega_i|}=N|\Omega_i\cap [0,\bx]|.
	\end{equation}
	The distribution of $NZ_\bx$ is usually called \emph{Poisson-binomial distribution} with $N$ trials and parameter vector $\bq(\bx)=(q_1(\bx),\ldots,q_N(\bx))$. Its mean is $\sum_{i=1}^N q_i(\bx)=N|[0,\bx]|$. 
	
	Setting $U_\bx=(\#\{X_1,\ldots,X_N\}\cap [0,\bx])/N$ with uniform i.i.d.~random variables $X_1,\ldots,X_N$ 
	in $[0,1]^d$ and using similar arguments as above yields correspondingly  
	\begin{align}\label{eq:MC}
	\E {\cL_p}(\cP_N)^p=\int_{[0,1]^d} \M_p(U_\bx) d\bx.
	\end{align}
	The variable $NU_\bx$ has a binomial distribution with $N$ trials and success probability $|[0,\bx]|$. Its mean is therefore coinciding with the mean of $NZ_\bx$. 
	
	We now use the fact that among all Poisson-binomial distributions with given mean, the binomial is the largest one in convex order. This is formalized in \cite[Theorem 3]{hoeffding} (see also the paragraph directly after the statement of this theorem) and implies 
	\begin{equation}\label{eq:hoeffding} 
	\M_p(Z_\bx)\le \M_p(U_\bx)
	\end{equation} with equality if and only if $Z_\bx$ has a classical binomial distribution, that is, if and only if $q_1(\bx)=\cdots=q_N(\bx)=|[0,\bx]|$. Integrating \eqref{eq:hoeffding} with respect to $\bx$ now yields \eqref{eq:Florian}  if we can exclude the equality case.

	Equality in  \eqref{eq:Florian} would imply equality in \eqref{eq:hoeffding}, and thus 
	$N|\Omega_i\cap [0,\bx]|=|[0,\bx]|$, $i\in\{1,\ldots,N\}$,
	for almost all $\bx\in [0,1]^2$. Hence 
	$\int_{[0,\bx]} 1_{\Omega_i}(\by)d\by=\int_{[0,\bx]}\frac1N d\by$ for almost all $\bx\in [0,1]^d$ and all $i\in \{1,\ldots,N\}$.
	Lemma \ref{lem1A} implies $1_{\Omega_i}=1/N$,
	which is not possible as $N\ge 2$. 
\end{proof}

It is worth emphasizing the special case $p=2$ of \eqref{eq:pthcnteredmean}, which has been used more or less explicitly and generally in the existing literature, as it implies that the mean $\cL_2$-discrepancy can be described as sum of contributions from the individual sample points. Also for $p=4$ an explicit integral representation can be stated.
\begin{proposition}\label{prop:3}
	Let $\bOmega$ be an equivolume partition and let $q_i(\bx)$, $i=1,\ldots,N$ be defined by \eqref{eq:qi}.  
Then 
	\begin{equation*}
\E {\cL}_2(\cP_\bOmega)^2=\frac1{N^2}\sum_{i=1}^N \int_{[0,1]^d} Q_i(\bx) \dd\bx,
\end{equation*}
with $Q_i(\bx)=q_i(\bx)\big(1-q_i(\bx)\big)^2+q_i(\bx)^2\big(1-q_i(\bx)\big)=q_i(\bx)\big(1-q_i(\bx)\big)$, 
and 
\begin{equation*}
\E {\cL}_4(\cP_\bOmega)^4=
\frac1{N^4}\sum_{i=1}^N \int_{[0,1]^d} R_i(\bx)\dd\bx+\frac6{N^4}\sum_{i=1}^N\sum_{j=1 \atop j\ne i}^N
\int_{[0,1]^d} Q_i(\bx)Q_j(\bx)\dd\bx,
\end{equation*}
where $R_i(\bx)=q_i(\bx)\big(1-q_i(\bx)\big)^4+q_i(\bx)^4\big(1-q_i(\bx)\big)$. 
\end{proposition}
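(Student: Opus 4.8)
The plan is to reduce everything to the pointwise evaluation of the central moments $\M_2(Z_\bx)$ and $\M_4(Z_\bx)$ and then integrate, exactly in the spirit of the proof of Theorem \ref{thm1}. Since $\bOmega$ is equivolume, \eqref{eq:pthcnteredmean} gives $\E\cL_p(\cP_\bOmega)^p=\int_{[0,1]^d}\M_p(Z_\bx)\,\dd\bx$ for $p=2$ and $p=4$, so it suffices to identify the two integrands. Recall that $NZ_\bx=\xi_1+\cdots+\xi_N$, where the $\xi_i$ are independent Bernoulli variables with success probabilities $q_i(\bx)$. Writing $Y_i=\xi_i-q_i(\bx)$, the $Y_i$ are independent and centered, and a direct evaluation of the Bernoulli central moments gives $\E Y_i^2=q_i(1-q_i)=Q_i(\bx)$ and $\E Y_i^4=q_i(1-q_i)^4+q_i^4(1-q_i)=R_i(\bx)$ (with probability $q_i$ one has $Y_i=1-q_i$, and with probability $1-q_i$ one has $Y_i=-q_i$). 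Since $N(Z_\bx-\E Z_\bx)=\sum_{i=1}^N Y_i$, we have $\M_p(Z_\bx)=N^{-p}\,\E\big(\sum_i Y_i\big)^p$.

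For $p=2$ the computation is immediate: expanding the square and using independence together with $\E Y_i=0$, all mixed terms vanish, so $\E(\sum_i Y_i)^2=\sum_i \E Y_i^2=\sum_i Q_i(\bx)$. Dividing by $N^2$ and integrating yields the first formula; the two equivalent expressions for $Q_i$ simply record the two ways a centered Bernoulli variable deviates from its mean.

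For $p=4$ I would expand $(\sum_i Y_i)^4=\sum_{i,j,k,l}Y_iY_jY_kY_l$ and discard every term in which some index occurs exactly once, since independence and $\E Y_i=0$ force its expectation to vanish. The surviving index patterns are the diagonal terms $i=j=k=l$, contributing $\sum_i \E Y_i^4=\sum_i R_i(\bx)$, and the two-pairs terms with two distinct indices each occurring twice, contributing a multinomial factor times $\sum_{i\ne j}\E Y_i^2\,\E Y_j^2=\sum_{i\ne j}Q_i(\bx)Q_j(\bx)$. Collecting both contributions, dividing by $N^4$, and integrating gives the second formula. Equivalently, one may invoke additivity of the fourth cumulant over independent summands, $\kappa_4(\sum_i Y_i)=\sum_i\kappa_4(Y_i)$, together with $\mu_4=\kappa_4+3\kappa_2^2$, which reproduces exactly these two terms.

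The only points requiring genuine care are the combinatorial bookkeeping of the fourth-moment expansion—that is, correctly counting the arrangements feeding into the mixed $Q_iQ_j$ term—and the interchange of expectation and integration. The latter is justified by Tonelli's theorem, as already in \eqref{eq:pthcnteredmean}, because the integrands $Q_i$, $R_i$, and $Q_iQ_j$ are nonnegative and bounded on $[0,1]^d$. I expect the bookkeeping of the mixed term to be the main (and only) subtle step; everything else is the one-line variance/fourth-moment identity for independent centered variables applied termwise.
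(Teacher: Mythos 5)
Your overall route is exactly the paper's: reduce via \eqref{eq:pthcnteredmean} to the pointwise central moments of $NZ_\bx$, a sum of independent centered Bernoulli variables $W_i$ with $\E W_i^2=Q_i(\bx)$ and $\E W_i^4=R_i(\bx)$, and then expand; the $p=2$ case is identical to the paper's and is fine.

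There is, however, a genuine gap, and it sits precisely at the step you yourself flag as the only subtle one and then leave undone: the combinatorial coefficient of the mixed term. Carrying out the count, a $4$-tuple $(i_1,\ldots,i_4)$ survives only if every distinct index occurs at least twice, so either all four indices agree (giving $\sum_i\E W_i^4=\sum_i R_i$) or two distinct values each occur twice; for each \emph{unordered} pair $\{i,j\}$ there are $\binom{4}{2}=6$ such tuples. Hence
\begin{equation*}
N^4\M_4\big(Z_\bx\big)=\sum_{i=1}^N R_i(\bx)+6\sum_{i<j} Q_i(\bx)Q_j(\bx)
=\sum_{i=1}^N R_i(\bx)+3\sum_{i=1}^N\sum_{j=1 \atop j\ne i}^N Q_i(\bx)Q_j(\bx),
\end{equation*}
with coefficient $3$, not $6$, on the \emph{ordered} double sum. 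Your own cumulant argument confirms this: $\kappa_4(Y_i)=R_i-3Q_i^2$, $\kappa_2(Y_i)=Q_i$, so $\mu_4=\sum_i\kappa_4(Y_i)+3\big(\sum_i Q_i\big)^2=\sum_i R_i+3\sum_{i\ne j}Q_iQ_j$. Thus your claim that collecting the contributions ``gives the second formula'' is not correct under the literal (ordered) reading of the double sum in the statement; the displayed formula with $6/N^4$ holds only if that sum is reinterpreted as running over unordered pairs $i<j$. A quick sanity check with $N=2$ i.i.d.\ centered summands: $\E(W_1+W_2)^4=2\mu_4+6\sigma^4$, whereas coefficient $6$ on the ordered sum would give $2\mu_4+12\sigma^4$. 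You should be aware that the paper's own proof commits the same double count: in \eqref{eq:Last} the factor $\binom{4}{2}$ multiplies an ordered double sum, which overcounts each pair by a factor of $2$. So your method is sound and coincides with the paper's, but a completed, careful version of it proves the identity with constant $3$ on the ordered sum and thereby shows that the constant in the stated proposition (and in \eqref{eq:Last}) needs either the unordered-pair reading or a correction from $6$ to $3$.
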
	

\begin{proof}
According to  \eqref{eq:pthcnteredmean} with $p=2$, we have 
	\begin{equation}\label{eq:1}
\E {\cL}_2(\cP_\bOmega)^2=\int_{[0,1]^d} \Var\big(Z_\bx(\cP)\big) \dd\bx, 
\end{equation}
	where  $Z_\bx(\cP)$ is given in \eqref{eq:Zx}. We have already seen that 
	$N Z_\bx(\cP)$  is the sum of the independent Bernoulli variables $Y_{i}=1_{[0, {\bf x}[}(X_i)$ with success probabilities $q_1(\bx),\ldots,$ $q_N(\bx)$, respectively, so $N^2\Var\big(Z_\bx(\cP)\big) =\sum_{i=1}^N  q_i(\bx)\big(1-q_i(\bx)\big)$. This can be inserted into \eqref{eq:1} to obtain the first claim.

	For the second claim, let $W_i=Y_i-\E Y_i$, $i=1,\ldots,N$, and note that 
	\begin{align}\label{eq:Last}
	N^4\M_4\!\big(Z_\bx(\cP)\big)=\sum_{i_1,\ldots,i_4=0}^N  \E (W_{i_1}\cdots W_{i_4})=\sum_{i=1}^N \E W_i^4+{4\choose 2}\sum_{i=1}^N\sum_{j=1 \atop j\ne i}^N\E W_i^2\E W_j^2.
	\end{align}
	As  
	\[
	P(W_i=w)=\left\{\begin{array}{ll}
	q_i(\bx), &\text{if } w=1-q_i(\bx),\\
	1-q_i(\bx), &\text{if } w=-q_i(\bx).
	\end{array}\right.
	\]
	we have $ \E W_i^4=R_i(\bx)$ and $\E W_i^2=Q_i(\bx)$. 
Inserting this into \eqref{eq:Last} and applying   \eqref{eq:pthcnteredmean} with $p=4$ yields the second assertion. 
\end{proof}

As an application of the previous proposition, we show that an equivolume partition minimizing the  mean $\cL_2$-discrepancy exists without any further regularity assumptions  when $d=1$ and $N=2$. 
	\begin{corollary}\label{CORnew}
		An equivolume partion $\bOmega=(\Omega_1,\Omega_2)$ of the unit interval $[0,1]$ minimizes the  mean $\cL_2$-discrepancy
		of its corresponding stratified point set among all equivolume partitions of two sets if and only if $\Omega_1$ coincides up to a set of measure zero with $[0,1/2]$ or 
		with $[1/2,1]$. 
	\end{corollary}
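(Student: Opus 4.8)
The plan is to turn the minimization into an elementary pointwise optimization via the explicit formula in Proposition \ref{prop:3}. Set $a(x) := |\Omega_1 \cap [0,x]|$; this is a nondecreasing, $1$-Lipschitz function with $a(0)=0$ and $a(1)=|\Omega_1|=1/2$ by the equivolume hypothesis, and correspondingly $h(x):=x-a(x)=|\Omega_2\cap[0,x]|$ is also nondecreasing. With $q_1(x)=2a(x)$ and $q_2(x)=2h(x)$, so that $q_1(x)+q_2(x)=2x$, Proposition \ref{prop:3} for $d=1$, $N=2$ gives
\[
\E\cL_2(\cP_\bOmega)^2=\frac14\int_0^1\big[q_1(1-q_1)+q_2(1-q_2)\big]\dd x=\frac14\int_0^1\big[2x-q_1(x)^2-q_2(x)^2\big]\dd x .
\]
As $\int_0^1 2x\,\dd x=1$ is fixed, minimizing the left-hand side is the same as maximizing $\int_0^1\big(q_1^2+q_2^2\big)\dd x$.

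For the pointwise problem I would use $q_1^2+q_2^2=2x^2+\tfrac12(q_1-q_2)^2$, so the integrand is largest when $q_1$ and $q_2$ are pulled as far apart as possible. At a fixed $x$ the feasibility bounds $\max(0,x-\tfrac12)\le a(x)\le\min(x,\tfrac12)$ confine $q_1(x)$ to an interval, and the convex map $q_1\mapsto q_1^2+(2x-q_1)^2$ is maximized at an endpoint of that interval; this produces a pointwise upper bound $M(x)$ for the integrand. A direct check shows that the partition $\Omega_1=[0,1/2]$, for which $a(x)=\min(x,1/2)$, hits this endpoint for every $x$ (and so does $\Omega_1=[1/2,1]$). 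Hence $\int_0^1 M(x)\,\dd x$ is actually attained, which proves that both partitions are minimizers and settles the \emph{if} direction.

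It remains to prove uniqueness. Since the integrand never exceeds $M(x)$, a partition is optimal if and only if the integrand equals $M(x)$ for almost every $x$, i.e.\ $q_1(x)$ sits at an endpoint of its feasible interval for a.e.\ $x$; explicitly $a(x)\in\{0,x\}$ for a.e.\ $x\in(0,1/2)$ and $a(x)\in\{x-\tfrac12,\tfrac12\}$ for a.e.\ $x\in(1/2,1)$. The decisive step is to promote this almost-everywhere dichotomy to a global one using that both $a$ and $h$ are continuous and nondecreasing: if $a(x_0)=0$ then $a\equiv0$ on $[0,x_0]$, whereas if $a(x_0)=x_0$ then $h(x_0)=0$ forces $h\equiv0$, i.e.\ $a(x)=x$, on $[0,x_0]$. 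Thus the two closed sets $\{a=0\}$ and $\{a=\mathrm{id}\}$ are each initial intervals $[0,\cdot]$, they meet only at $0$, and their union must be conull in $[0,1/2]$; consequently exactly one of them fills $[0,1/2]$. The same reasoning on $[1/2,1]$ yields $a\equiv\tfrac12$ or $a\equiv\mathrm{id}-\tfrac12$ there, and continuity of $a$ at $x=1/2$ links the two halves, leaving precisely the two possibilities $\Omega_1=[0,1/2]$ and $\Omega_1=[1/2,1]$ up to a null set.

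I expect the uniqueness argument of the last paragraph to be the only real obstacle: the optimality condition ``$q_1(x)$ is at an endpoint'' holds only almost everywhere, and one must rule out that the two admissible branches interleave on a positive-measure set. This is exactly where the monotonicity of both $a$ and $h=\mathrm{id}-a$ is indispensable, the remaining computations being routine.
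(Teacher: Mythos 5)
Your proposal is correct and follows essentially the same route as the paper: both start from Proposition \ref{prop:3}, reduce to a pointwise optimization of the same quantity over the same feasible interval $\max(0,x-\tfrac12)\le a(x)\le \min(x,\tfrac12)$, and use strictness to force the extremal profiles; indeed your convex objective $q_1^2+q_2^2=4x^2-2\,q_1(2x-q_1)$ is just an affine rescaling (with sign flip) of the paper's strictly concave $g_x(q)=q(2x-q)$, so maximizing one is minimizing the other. The only genuine difference is the promotion of the almost-everywhere dichotomy to a global one: the paper writes $q_1(x)$ as a convex combination $\alpha_x\underline q(x)+(1-\alpha_x)\overline q(x)$ and argues that the continuous coefficient $\alpha_x$ taking values in $\{0,1\}$ a.e.\ on the connected interval $(0,1)$ must be constant, whereas you exploit monotonicity of both $a$ and $h=\mathrm{id}-a$ to show that the two branch sets are initial (resp.\ terminal) intervals meeting only at the endpoints; both mechanisms are valid and of comparable length, with yours trading the connectedness argument for a slightly more hands-on structural one.
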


\begin{proof} 
	Let an equivolume partition $\bOmega=(\Omega_1,\Omega_2)$  of the unit interval be given. It is determined by the measurable set  $\Omega_1\subset [0,1]$ with $1$-dimensional Lebesgue measure $1/2$, as $\Omega_2=[0,1]\setminus \Omega_1$ (at least up to a set of measure zero). The functions $q_i(\cdot)$ in \eqref{eq:qi} are thus  $q_1(x)=2|\Omega_1\cap [0,x]|$ and $q_2=2x-q_1(x)$ and Proposition \ref{prop:3} implies 
		\begin{equation}\label{eq:l2}
			\E {\cL}_2(\cP_\bOmega)^2=\frac1{4}\int_{0}^1  2x-4x^2+2q_1(x)\big(2x-q_1(x)\big)\dd x=-\frac{1}{12}+
			\frac1{2}\int_{0}^1  g_x\big(q_1(x)\big)\dd x,
		\end{equation}
		where $g_x(q)=q(2x-q)$. Clearly $q\mapsto g_x(q)$ is strictly concave for all $x\in [0,1]$.

		It is easy to see that $\underline q(x)\le q_1(x)\le \overline q(x)$, where 
		\[
		\overline q(x)=2\big|[0,1/2]\cap [0,x]\big|\quad\text{  and }\quad
		\underline q(x)=2\big|[1/2,1]\cap [0,x]\big|
		\]
		for $x\in [0,1]$. Hence, there is an $\alpha_x\in [0,1]$ with 
		$q_1(x)=\alpha_x \underline q(x)+(1-\alpha_x)\overline q(x)$. Now, \eqref{eq:l2}, the  concavity of $g_x$ and the fact that $g_x\big(\underline q(x)\big)=g_x\big(\overline q(x)\big)=\max\{0,2x-1\}$ yield 
		\begin{align*}
			\E {\cL}_2(\cP_\bOmega)^2&\ge -\frac{1}{12}+
			\frac1{2}\int_{0}^1  \alpha_x g_x(\underline q(x))+(1-\alpha_x)g_x(\overline q(x))\dd x
			\\&=-\frac{1}{12}+
			\frac1{2}\int_{\frac12}^1 (2x-1)\dd x=\frac1{24}, 
		\end{align*}
		with equality if and only if $\alpha_x\in \{0,1\}$ holds for almost all $x\in (0,1)$ due to the strict concavity. As $q_1$ is continuous, this can only happen when $\alpha_x=1$ for all $x\in (0,1)$ or $\alpha_x=0$ for all $x\in (0,1)$. These two cases correspond to $q_1\in \{\underline q, \overline q\}$, and thus to the two stated choices of $\Omega_1$. 
	
\end{proof}

\subsection{Example 1: Illustration of partition principle} \label{example1}
For illustration, we derive the mean $\cL_2$-discrepancy of an $N$-point Monte Carlo sample in $[0,1]^d$. Using 	\eqref{eq:MC}, the i.i.d.~property of the sampling points $\bX_1,\ldots,\bX_N$ and 
\[
\Var(1_{[0,\bx[}(\bX_i))=\big|[0,\bx[\big|\big(1-\big|[0,\bx[\big|\big)
\]
we  obtain 
\begin{align*}
\E {\cL_2}(\cP_N)^2 = \int_{[0,1]^d} \Var\Big(\frac1N\sum_{i=1}^N 1_{[0,\bx[}(\bX_i) \Big)\dd\bx 
=\frac1N\int_{[0,1]^d} \big|[0,\bx[\big|\big(1-\big|[0,\bx[\big|\big)\dd\bx.
\end{align*}
The latter integral equals $\int_{[0,1]^d} \Big(\prod_{i=1}^d x_i-\prod_{i=1}^d x_i^2\Big)\dd\bx$ and can be evaluated explicitly. One obtains 
\begin{align}\label{eq:MCLd}
\E {\cL_2}(\cP_N)^2 =\Big[\frac1{2^d}-\frac1{3^d}\Big]\frac1N. 
\end{align}
In particular, for $d=2$, we get 
\begin{align}\label{eq:MCL2}
\E {\cL_2}(\cP_N)^2 =  \frac{5}{36N}. 
\end{align}

We now compare this with the mean $\cL_2$-discrepancy of a stratified sample $\cP_{\mathrm{vert}}$ based on the \emph{vertical strip partition} in Fig.~\ref{fig:simplePartition} generalized to arbitrary $d\ge 2$ by putting
\[
\Omega_i=\left\{\bx=(x_1,\ldots,x_d)\in [0,1]^d: \frac{i-1}N\le x_1\le \frac{i}{N}\right\}
\]
for $i=1,\ldots,N$. The partition $\bOmega=(\Omega_1,\ldots,\Omega_N)$ is clearly equivolume. 
  For given $\bx=(x_1,\ldots,x_d)\in ]0,1]^d$ we let $\bar{\iota}:=\lfloor N x_1 \rfloor$
, and obtain for the success probabilities introduced in the proof of Theorem \ref{thm1}
\[ 
q_i=q_i(\bx)= N{|\Omega_i \cap[0,\bx[ |} = 
\prod_{j=2}^N x_j\times 
\begin{cases}
1 & i \le\bar{a}, \\[3pt]
N x_1 - \bar{\iota}  &  i = \bar{\iota}+1,\\[3pt]
0 & i > \bar{\iota}+1.
\end{cases}
\]
Due to independence, the relative number of points $Z_{\bx}(\cP_{\mathrm{vert}})$ given by \eqref{eq:Zx} has variance
\begin{align*}
\M_2\big( Z_{\bx}(\cP_{\mathrm{vert}})\big) &=\Var \left( Z_{\bx}(\cP_{\mathrm{vert}})\right ) = \frac{1}{N^2} \sum_{i=1}^N q_i (1-q_i) \\
 &= \frac{1}{N^2} \left( q_{\bar{\iota}+1}(1-q_{\bar{\iota}+1}) + \bar{\iota}\prod_{j=2}^N x_j \big(1-\prod_{j=2}^N x_j\big) \right).
\end{align*}
Therefore, \eqref{eq:pthcnteredmean}  
yields 
\begin{align*}
\E {\cL_2}(\cP_{\mathrm{vert}})^2 &=\frac1{N^2}\int_{[0,1]^{d}}
\Big[
N\prod_{j=1}^N x_j-[(Nx_1-\lfloor N x_1 \rfloor)^2+\lfloor N x_1 \rfloor]\prod_{j=2}^N x_j^2\Big]\,\,\dd \bx\\
&= \frac1{2^dN}- \frac1{3^{d-1}N^2}\int_0^1\left[\big(Nx-\lfloor N x \rfloor\big)^2+\lfloor N x \rfloor\right]\,\dd x.
\end{align*}
The one-dimensional integral on the right hand side of this chain of equations evaluates to 
\[
\sum_{k=0}^{N-1} \int_{\frac kN}^{\frac{k+1}N}\big[(Nx-k)^2+k\big]\,\dd x=\frac{1}{3}+\frac{N-1}{2}=\frac{3N-1}{6}.
\]
Putting things together, we arrive at 
\begin{align}\label{eq:vertStrip}
\E {\cL_2}(\cP_{\mathrm{vert}})^2 =\Big[\frac1{2^d}-\frac{3N-1}{2N}\frac{1}{3^d}\Big]\frac1N<
\E {\cL_2}(\cP_N)^2,
\end{align}
where \eqref{eq:MCLd} and $N\ge 2$ was used. This confirms the general result that equivolume stratification is always strictly better than Monte Carlo sampling. It also shows that this stratification scheme has the same asymptotic order (namely $1/N$) as Monte Carlo sampling, but a uniformly better leading constant: for instance, when $d=2$ we get 
 \[
 \E {\cL_2}(\cP_{\mathrm{vert}})^2=\frac{3N+2}{36N^2}\approx \frac35 \E {\cL_2}(\cP_N)^2
 \]
 for large $N$.

\subsection{Proofs for Section \ref{subsect:2.3}} \label{subsect:3.3}
 Fix $A\subset \R^d$. We let $\inter A$ and $\bd A$ be the interior and the boundary of $A$, respectively. For $\eps>0$ the \emph{$\eps$-parallel set} 
\[
A_\eps=\{x\in \R^d: \inf_{y\in A}\|x-y\|\le  \eps\}
\]
consists of all points with a distance at most $\eps$ from $A$.  
We recall that a set $A\subset \R^d$ is said to have \emph{reach} $r>0$ if for any $0<\eps<r$ and $x\in A_\eps$ there is a point $y\in A$ such that $\|x-y\|<\|x-z\|$ for all $z\in A\setminus\{y\}$. 

The family of non-empty compact sets will be endowed with the Hausdorff metric $d_\mathrm{H}$ given by 
\[
d_\mathrm{H}(K,K')=\inf\{\eps>0: K\subset K_\eps',  K'\subset K_\eps\},
\]
where $\emptyset\ne K,K'\subset \R^d$ are compact. Let $\cC$ be the family of nonempty compact sets in $[0,1]^d$, and 
for $r>0$ let $\cR_r$ be the subfamily of sets with reach at least $r$. The latter contains $\cK$, the family of 
non-empty compact convex subsets of $[0,1]^d$. Crucial for our line of arguments is the fact that all three families are compact in the Hausdorff metric. This statement for $\cK$ is the famous \emph{Blaschke selection theorem} \cite[Theorem 1.8.7]{schneider}; a proof for $\cC$ and  $\cR_r$ can be found in \cite[Theorem 1.8.5]{schneider} and 
\cite[Remark 4.14]{Federer}, respectively. As a reference of convex geometric notions used in this section, we recommend \cite{schneider}.

Importantly, the volume functional is \emph{not} continuous on $\cC$ (for instance,  $[0,1]^d$ can be approximated by finite sets in the Hausdorff metric), but it is continuous on both $\cR_r$ and $\cK$. This can be seen by means of a Steiner-type result stating for $K\in \cR_r$ that
\begin{align}
\label{eq:Steiner}
|K_\eps\setminus K|=\sum_{k=0}^{d-1} \kappa_{d-k} V_{k}(K)\eps^{d-k}
\end{align}
holds for $0\le\eps<r$; see \cite{Federer}. Here, $\kappa_{d-k}$ is the volume of the Euclidean unit ball in $\R^{d-k}$, and $V_k(K)\in \R$ is the $k$th \emph{total curvature measure} (also called \emph{intrinsic volume} when applied to convex sets). We use repeatedly that $K\mapsto V_k(K)$ is continuous on $\cR_r$. 
These and more results on sets of positive reach can be found in 
\cite{Federer}; see also the survey \cite{Thaele}, where an outline of the history, newer results and 
additional references on the matter can be found. 

\begin{proposition}\label{prop:compact} 
	Let $N\ge1$ and $r>0$ be fixed. 	
	The family $\mathfrak{P}_N(r)$ of all equivolume  partitions of $[0,1]^d$ consisting of $N$ sets in $\cR_r$ is compact. 
	
	The same holds true for the family $\mathfrak{P}_N^{\mathrm{conv}}$ of all equivolume partitions of $[0,1]^d$ consisting of $N$ convex sets. 
\end{proposition}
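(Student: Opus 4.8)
The plan is to realize both families as \emph{closed} subsets of a compact space and then invoke that a closed subset of a compact metric space is compact. Concretely, I would identify a partition $\bOmega=(\Omega_1,\ldots,\Omega_N)$ with the $N$-tuple $(\Omega_1,\ldots,\Omega_N)$ living in $\cR_r^N$ (respectively $\cK^N$), metrized by $\max_{1\le i\le N}d_\mathrm{H}(\Omega_i,\Omega_i')$. Since $\cR_r$ and $\cK$ are compact in the Hausdorff metric (as recalled above), the finite products $\cR_r^N$ and $\cK^N$ are compact metric spaces. Hence it suffices to show that $\mathfrak{P}_N(r)$ is sequentially closed in $\cR_r^N$ (and $\mathfrak{P}_N^{\mathrm{conv}}$ in $\cK^N$); I will carry out the argument for $\mathfrak{P}_N(r)$, the convex case being verbatim with $\cK$ in place of $\cR_r$.

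So I would take a sequence $\bOmega^{(k)}=(\Omega_1^{(k)},\ldots,\Omega_N^{(k)})\in\mathfrak{P}_N(r)$ converging componentwise in the Hausdorff metric to some $\bOmega=(\Omega_1,\ldots,\Omega_N)\in\cR_r^N$, and check that $\bOmega$ is again an equivolume partition. Membership $\Omega_i\in\cR_r$ is automatic, as $\cR_r$ is closed. For the covering property I would use that the union map is (Lipschitz) continuous with respect to $d_\mathrm{H}$, so that $\bigcup_{i=1}^N\Omega_i^{(k)}\to\bigcup_{i=1}^N\Omega_i$; since the left-hand side equals $[0,1]^d$ for every $k$ and limits in a metric space are unique, $\bigcup_{i=1}^N\Omega_i=[0,1]^d$.

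The decisive step is to recover the equivolume and null-overlap conditions in the limit, and this is where the regularity of $\cR_r$ is used. Because each $\bOmega^{(k)}$ is an equivolume partition with null overlaps, one has $|\Omega_i^{(k)}|=1/N$ for all $i$ and $k$. The volume functional is continuous on $\cR_r$ via the Steiner-type formula \eqref{eq:Steiner}, so $|\Omega_i|=\lim_{k\to\infty}|\Omega_i^{(k)}|=1/N$; this already yields the equivolume property and $\sum_{i=1}^N|\Omega_i|=1$. To obtain the null overlaps I would integrate the pointwise inequality $\sum_{i=1}^N 1_{\Omega_i}\ge 1_{\bigcup_{i}\Omega_i}$, giving $\sum_{i=1}^N|\Omega_i|\ge\big|\bigcup_{i}\Omega_i\big|=\big|[0,1]^d\big|=1$. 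As the two sides coincide, the nonnegative integrand $\sum_{i=1}^N 1_{\Omega_i}-1_{\bigcup_i\Omega_i}$ vanishes almost everywhere, which forces $|\Omega_i\cap\Omega_j|=0$ for all $i\ne j$. Thus $\bOmega\in\mathfrak{P}_N(r)$, establishing closedness and hence compactness.

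The main obstacle is exactly this last step: Hausdorff convergence exerts no direct control over Lebesgue-negligible overlaps, and indeed the volume functional is \emph{not} continuous on the ambient class $\cC$ of all compact sets (finite sets approximate $[0,1]^d$ in $d_\mathrm{H}$, yet have volume zero). The argument succeeds only because passing to the regular class $\cR_r$ (or $\cK$) restores continuity of the volume; this continuity, together with the equality case in the subadditivity of Lebesgue measure, is precisely what transports the equivolume and non-overlap constraints to the limit.
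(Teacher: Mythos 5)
Your proof is correct and takes essentially the same route as the paper: both realize $\mathfrak{P}_N(r)$ and $\mathfrak{P}_N^{\mathrm{conv}}$ as closed subsets of the compact product spaces $\cR_r^N$ and $\cK^N$, using the Hausdorff--Lipschitz continuity of the union map and the continuity of the volume functional on $\cR_r$ via the Steiner-type formula. The only cosmetic difference is where the null-overlap condition is handled: the paper observes up front, via inclusion--exclusion, that covering plus volumes $1/N$ makes the non-overlap requirement redundant (so the family is cut out by continuous constraints), whereas you recover it in the limit through equality in the subadditivity of Lebesgue measure --- the same measure-theoretic fact.
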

\begin{proof}
	 Clearly, the family of equivolume partitions of sets in $\cR_r$  is a subset of the Cartesian product $\cR_r^N$, 
more precisely, 
\begin{align}\label{eq:partDef}
\mathfrak{P}_N(r)=\big\{(K_1,\ldots,K_N)\in \cR_r^N: \bigcup_{i=1}^K K_i=[0,1]^d, \text{ and } 
|K_1|=\cdots=|K_N|=\frac{1}{N}  \big\}.
\end{align}
In fact, assume that $(K_1,\ldots,K_N)$ is an element of the right hand side of \eqref{eq:partDef}. If there was a set $K_j$ overlapping with $\bigcup_{i\ne j}^N K_i$, we would have 
\[
  0<\big|K_j\cap \bigcup_{i\ne j}^N K_i\big|= 
   |K_j|+ \big|\bigcup_{i\ne j}^N K_i\big|-\big|K_j\cup \bigcup_{i\ne j}^N K_i\big|\le \frac1N+(N-1)\frac1N-1=0,
\]
a contradiction.

From the definitions it is clear that $(K,M)\mapsto K\cup M$ is Lipschitz continuous if the product space is endowed with the maximum norm of the marginal metrics. 
Hence, the right hand set in \eqref{eq:partDef} is closed in $\cR_r^N$, as it is defined by means of continuous functions. As $\cR_r^N$ is compact, $\mathfrak{P}_N(r)$ is compact too. 

Finally, $\cK^N$ is compact implying the compactness of $\mathfrak{P}_N^{\mathrm{conv}}=\mathfrak{P}_N(r)\cap \cK^N$. 
\end{proof}

%

We now show that an average discrepancy of a stratified sample is continuous as a function of the partitioning sets. 
In the following, $\Delta$ stands for a measure of discrepancy, and can be the $\cL_p$-discrepancy or any other, which satisfies the rather weak assumptions below. 

\begin{proposition}\label{lem1}
	
	Fix $r>0$. If  $\Delta:([0,1]^d)^N\to \R$ is measurable and bounded, 
	then $\phi_\Delta:\cR_r^N\to \R$ with 
	\[
	\phi_\Delta(K_1,\ldots,K_N)=\int_{([0,1]^d)^N}\Delta(\bx_1,\ldots,\bx_N)
        \prod_{i=1}^N 
        \1_{K_i}(\bx_i)
    \,\dd(\bx_1\ldots,\bx_N)
	\]
	is Lipschitz continuous. 
\end{proposition}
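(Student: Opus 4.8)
The plan is to reduce the statement to a purely geometric estimate on symmetric differences and then to control these using the Steiner-type formula \eqref{eq:Steiner}. Put $M:=\sup|\Delta|<\infty$. For fixed $\bx_1,\ldots,\bx_N$ the two products of indicators telescope,
\[
\prod_{i=1}^N \1_{K_i}(\bx_i)-\prod_{i=1}^N \1_{K_i'}(\bx_i)=\sum_{j=1}^N\Big(\prod_{i<j}\1_{K_i'}(\bx_i)\Big)\big(\1_{K_j}(\bx_j)-\1_{K_j'}(\bx_j)\big)\Big(\prod_{i>j}\1_{K_i}(\bx_i)\Big),
\]
and since every indicator lies in $\{0,1\}$, the modulus of the left-hand side is at most $\sum_{j=1}^N|\1_{K_j}(\bx_j)-\1_{K_j'}(\bx_j)|$. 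Integrating against $|\Delta|\le M$ over $([0,1]^d)^N$ and performing the $N-1$ trivial integrations in the variables $\bx_i$, $i\ne j$ (each contributing the factor $|[0,1]^d|=1$), I would obtain
\[
\big|\phi_\Delta(K_1,\ldots,K_N)-\phi_\Delta(K_1',\ldots,K_N')\big|\le M\sum_{j=1}^N|K_j\triangle K_j'|,
\]
where $K_j\triangle K_j'$ is the symmetric difference. Hence it suffices to prove a per-coordinate bound $|K\triangle K'|\le C\,d_\mathrm{H}(K,K')$ for $K,K'\in\cR_r$, with a constant $C$ independent of $K,K'$.

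For this key estimate, set $\delta:=d_\mathrm{H}(K,K')$ and first treat the regime $\delta<r$. By definition of the Hausdorff metric, $K\subset K_\delta'$ and $K'\subset K_\delta$, so $K\setminus K'\subset K_\delta'\setminus K'$ and $K'\setminus K\subset K_\delta\setminus K$, giving $|K\triangle K'|\le|K_\delta\setminus K|+|K_\delta'\setminus K'|$. The Steiner-type formula \eqref{eq:Steiner} rewrites each of these parallel-set differences as $\sum_{k=0}^{d-1}\kappa_{d-k}V_k(\cdot)\,\delta^{d-k}$. Since $\cR_r$ is compact in the Hausdorff metric and each total curvature measure $K\mapsto V_k(K)$ is continuous on $\cR_r$, there is a uniform bound $|V_k(K)|\le B_k$ over $K\in\cR_r$. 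Fixing some $\delta_0\in(0,r)$ and using $\delta^{d-k}\le\delta_0^{\,d-k-1}\delta$ for $d-k\ge 1$ and $\delta\le\delta_0$, this yields $|K\triangle K'|\le C_0\,\delta$ whenever $\delta\le\delta_0$, with $C_0$ depending only on $d$, $r$, $\delta_0$ and the $B_k$.

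It then remains to rule out the large-distance regime, which is immediate: every set lies in $[0,1]^d$, so $|K\triangle K'|\le 1$ always, and if $\delta>\delta_0$ then $|K\triangle K'|\le 1\le\delta/\delta_0$. Combining the two cases gives $|K\triangle K'|\le C\,d_\mathrm{H}(K,K')$ with $C=\max\{C_0,1/\delta_0\}$, valid for all $K,K'\in\cR_r$. Feeding this into the telescoping bound, and recalling that the product space carries the maximum metric $\max_j d_\mathrm{H}(K_j,K_j')$, shows $\phi_\Delta$ is Lipschitz with constant $NMC$; the identical argument works on $\cK^N\subset\cR_r^N$. The main obstacle is the geometric Lipschitz estimate $|K\triangle K'|\le C\,d_\mathrm{H}(K,K')$, and it is precisely here that the positive-reach hypothesis is indispensable: for arbitrary compact sets the volume is not even continuous in $d_\mathrm{H}$ (as $[0,1]^d$ is a Hausdorff limit of finite sets), whereas on $\cR_r$ the Steiner formula together with the boundedness of the total curvature measures converts control of boundary thickening into control of symmetric-difference volume. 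Everything else is routine.
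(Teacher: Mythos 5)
Your proof is correct and follows essentially the same route as the paper's: bound the difference of indicator products by a sum of single-coordinate indicator differences, then control the volume of the sets where the indicators disagree via the Steiner-type formula \eqref{eq:Steiner} together with compactness of $\cR_r$ and continuity of the total curvature measures $V_k$ to obtain a uniform constant. If anything, your explicit handling of the large-distance regime $d_\mathrm{H}(K,K')\ge\delta_0$ (where the Steiner formula is unavailable, since it requires the parallel distance to be below the reach $r$) is slightly more careful than the paper's argument, which works with $0<\eps<1$ without commenting on the case $\eps\ge r$.
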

\begin{proof}
	We let $\|\cdot\|_\infty$ be the $L^\infty$-norm of bounded functions  on $([0,1]^d)^N$. We will use the bound 
	\begin{equation}
	\label{vAM}
	\left|\prod_{i=1}^N t_i-\prod_{i=1}^N t_i'\right|\le \sum_{j=1}^N |t_j-t_j'|,
	\end{equation}
	which holds for all $t_1,\ldots,t_N,t_1',\ldots,t_N'\in \{0,1\}$.  Let $0<\eps<1$. 
	  If  $d_H(K_i,K_i')\le \eps$ for all $i=1,\ldots,N$, the indicators $\1_{K_i}$ and $\1_{K_i'}$ coincide on 
	  the complement of 
	\[M_i=
	\left[(K_i)_\eps\setminus {K_i}\right]\cup
	\left[(K_i')_\eps\setminus {K_i'}\right].
	\]
	By Steiner's formula \eqref{eq:Steiner}, we have 
	\begin{equation*}
	|M_i|\le \sum_{k=0}^{d-1}\eps^{d-k}\kappa_{d-k} [V_k(K_i)+V_k(K_i')]. 
	\end{equation*}
	As $V_k$ is continuous and $\cR_r$ is compact, $V_k(K_i)+V_k(K_i')\le 2\max_{M\in \cR_r} V_k(M)<\infty$, so 
	\begin{equation}
\label{epsbound}
	|M_i|\le c \eps
	\end{equation}
	for all $i=1,\ldots,N$, 
	with a constant $c$ that does not depend on $(K_1,\ldots,K_N,K_1',\ldots,K_N')$.  
	Thus, by \eqref{vAM} and \eqref{epsbound}, we have 
	\begin{align*}
	|\phi_\Delta(K_1,\ldots,K_N)-\phi_\Delta(K_1',\ldots,K_N')|
	\le cN \|\Delta\|_\infty \eps. 
	\end{align*}
	This implies the claimed continuity.
\end{proof}

\begin{proposition}\label{prop:4}
	Let $\Delta$ be as in Proposition \ref{lem1} and fix $r>0$. For any finite equivolume partition $\bOmega=(\Omega_1,\ldots,\Omega_N)$ of $[0,1]^d$ with sets in $\cR_r$, 
	let $\cP_\bOmega=\{\bX_1,\ldots,\bX_N\}$ be the corresponding stratified sample. 
	
	Then $\E \Delta(\bX_1,\ldots,\bX_N)$ is continuous as a function of $\bOmega\in\mathfrak{P}_N(r)$. 
\end{proposition}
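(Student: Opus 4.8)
The plan is to recognise $\E\Delta(\bX_1,\ldots,\bX_N)$ as a fixed scalar multiple of the functional $\phi_\Delta$ from Proposition \ref{lem1}, whose Lipschitz continuity on $\cR_r^N$ has already been established, and then simply restrict to $\mathfrak{P}_N(r)$. First I would write the expectation out explicitly. Since each $\bX_i$ is uniform on $\Omega_i$, its Lebesgue density is $\1_{\Omega_i}(\cdot)/|\Omega_i|$, and by independence the joint density of $(\bX_1,\ldots,\bX_N)$ is $\prod_{i=1}^N \1_{\Omega_i}(\bx_i)/|\Omega_i|$. Because $\bOmega$ is an equivolume partition of the unit cube into $N$ sets, $|\Omega_i|=1/N$ for every $i$, so the joint density collapses to $N^N\prod_{i=1}^N \1_{\Omega_i}(\bx_i)$. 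Hence
\[
\E\Delta(\bX_1,\ldots,\bX_N)=N^N\int_{([0,1]^d)^N}\Delta(\bx_1,\ldots,\bx_N)\prod_{i=1}^N\1_{\Omega_i}(\bx_i)\,\dd(\bx_1,\ldots,\bx_N)=N^N\phi_\Delta(\Omega_1,\ldots,\Omega_N).
\]

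The conclusion is then immediate. By Proposition \ref{lem1} the map $\phi_\Delta$ is Lipschitz continuous on $\cR_r^N$. The family $\mathfrak{P}_N(r)$ is a subset of $\cR_r^N$, carrying the subspace topology inherited from the maximum of the marginal Hausdorff metrics, and $\bOmega\mapsto\E\Delta(\cP_\bOmega)$ is exactly the restriction of the continuous function $N^N\phi_\Delta$ to this subset. A restriction of a continuous function to a subspace is continuous, which gives the assertion.

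There is essentially no analytic obstacle left, since the substantive work — the continuity estimate via Steiner's formula \eqref{eq:Steiner} and the uniform bound $|M_i|\le c\eps$ — was carried out in Proposition \ref{lem1}. The one point that genuinely requires the hypotheses is the use of the equivolume assumption to turn the set-dependent normalising constants $1/|\Omega_i|$ into the single partition-independent factor $N^N$; without this the sampling densities would themselves vary with $\bOmega$ and one could not directly invoke $\phi_\Delta$. Everything else is bookkeeping.
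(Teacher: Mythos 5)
Your argument is correct and coincides with the paper's own proof: both use the equivolume property to write $\E\Delta(\bX_1,\ldots,\bX_N)=N^N\phi_\Delta(\Omega_1,\ldots,\Omega_N)$ and then invoke the Lipschitz continuity of $\phi_\Delta$ from Proposition \ref{lem1} restricted to $\mathfrak{P}_N(r)$. Your additional remark that the equivolume assumption is precisely what makes the normalising constants partition-independent is exactly the crux, so nothing is missing.
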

\begin{proof}
	As the partitions are equivolume, we have $|\Omega_i|=1/N$ for all $i$, so 
 \[
 \E \Delta(\bX_1,\ldots,\bX_N)
 =N^N\phi_\Delta(\Omega_1,\ldots,\Omega_N),
\]	
and the claim follows from Proposition \ref{lem1}. 
\end{proof}

\begin{proof}[Proof of Theorem \ref{thm:existenceReach}.]
	Assume that $\Delta:([0,1]^d)^N\to \R$ is measurable and bounded. Proposition \ref{prop:4} thus implies that $\E \Delta(\cP_{\bOmega})$ is a continuous function of $\bOmega\in\mathfrak{P}_N(r)$, where   $\cP_{\bOmega}$ is the corresponding stratified sample.
	
	As $\mathfrak{P}_N(r)$  and its subset $\mathfrak{P}_N^{\mathrm{conv}}$  are both compact by Proposition \ref{prop:compact},  $\E\Delta(\cP_\bOmega)$ attains minima on either set. This shows the assertion. 
\end{proof}

Corollary \ref{thm:existenceConvex} now follows directly from Theorem \ref{thm:existenceReach}. In fact, for $1\le p<\infty$ the function 	\[
	\Delta_p(\bx_1,\ldots,\bx_N)=\cL_p(\{\bx_1,\ldots,\bx_N\})^p
	\] is bounded.  
	It is also continuous due to a dominated convergence argument. Even simpler, the measurability and boundedness of $\Delta(\bx_1,\ldots,\bx_N)=D^*(\{\bx_1,\ldots,\bx_N\})$ follows directly from the definition. 
\medskip

It should be remarked that the above arguments do not rely on the choice of the unit cube as reference set. 
The results and proofs continue to hold with minor modifications if the  set $[0,1]^d$ is replaced by any fixed compact convex set $K\subset \R^d$.  



\subsection{Example 2: Convex equivolume partitions into two sets}\label{sec:equivolConvexN=2}
Recall that $\mathfrak{P}_N^{\mathrm{conv}}$ is the family of all convex equivolume partitions of $[0,1]^d$ with $N$ elements. According to Theorem \ref{thm:existenceConvex}, there exists a partition in $\mathfrak{P}_N^{\mathrm{conv}}$  that minimizes the mean $\cL_2$-discrepancy. The following result determines this partition for $N=2$ and $d=2$. In this simple case, $\mathfrak{P}_N^{\mathrm{conv}}$  can be described with the help of a one-parameter model, which is relatively easy to analyze. The optimal partition $\bOmega_{*}^{(2)}$ is obtained by cutting $[0,1]^2$ into two congruent triangles by the anti-diagonal; see Fig.~\ref{fig:two}, right. 

\begin{center}
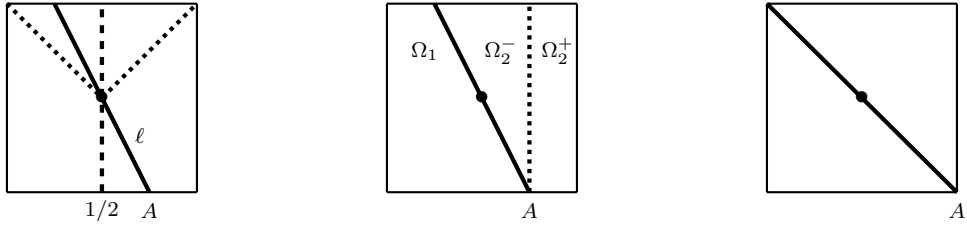
\begin{figure}[h!]
\begin{tikzpicture}[scale=2.5]
\draw [thick] (0,0) -- (1,0);
\draw [thick] (1,1) -- (1,0);
\draw [thick] (0,1) -- (1,1);
\draw [thick] (0,0) -- (0,1);
\draw [ultra thick] (0.25,1) -- (0.75,0);
\draw [ultra thick, dashed] (0.5,0) -- (0.5,1);
\draw [ultra thick, dotted] (0,1) -- (0.5,0.5) -- (1,1);
\node at (0.5, 0.5) {$\bullet$};
\node at (0.5, -0.1) {\scriptsize $1/2$};
\node at (0.75, -0.1) {\scriptsize$A$};
\node at (0.7, 0.3) {\scriptsize$\ell$};


\draw [thick] (2,0) -- (3,0);
\draw [thick] (3,1) -- (3,0);
\draw [thick] (2,1) -- (3,1);
\draw [thick] (2,0) -- (2,1);
\draw [ultra thick] (2.25,1) -- (2.75,0);
\draw[ultra thick, dotted] (2.75,0) -- (2.75,1);
\node at (2.5, 0.5) {$\bullet$};

\node at (2.2, 0.75) {\scriptsize $\Omega_1$};
\node at (2.6, 0.75) {\scriptsize $\Omega_2^-$};
\node at (2.9, 0.75) {\scriptsize $\Omega_2^+$};
\node at (2.75, -0.1) {\scriptsize$A$};


\draw [thick] (4,0) -- (5,0);
\draw [thick] (5,1) -- (5,0);
\draw [thick] (4,1) -- (5,1);
\draw [thick] (4,0) -- (4,1);
\draw [ultra thick] (4,1) -- (5,0);
\node at (4.5, 0.5) {$\bullet$};
\node at (5, -0.1) {\scriptsize$A$};

\end{tikzpicture}
\caption{Left: Model for all convex partitions into two sets with equal volume. Middle: The three different regions considered for the case $A \in (1/2,1]$. Right: The partition $\bOmega_{*}^{(2)}$ of this family with the smallest expected discrepancy.} \label{fig:two}
\end{figure}
\end{center}

\begin{lemma} \label{lem:n2}
For $d=2$ we have 
$$ \min_{\bOmega\in \mathfrak{P}_2^{\mathrm{conv}}}  \E \cL_2^2(\cP_{\bOmega}) = 
	\E \cL_2^2(\cP_{\bOmega_{*}^{(2)}} ) = 0.05.$$
\end{lemma}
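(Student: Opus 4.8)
The plan is to reduce the problem to a one-parameter optimisation and then carry it out explicitly.

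\emph{Structure of the competitor class.} First I would pin down $\mathfrak{P}_2^{\mathrm{conv}}$ when $d=2$. If $\Omega_1,\Omega_2$ are convex with disjoint interiors and $\Omega_1\cup\Omega_2=[0,1]^2$, then $\Omega_1$ and $\Omega_2$ can be separated by a line, so they are exactly the two pieces into which some chord cuts the square. The equivolume constraint forces each piece to have area $1/2$; since the square is centrally symmetric about $(1/2,1/2)$, the central line in any given direction halves the area while a parallel shift of it strictly changes the two areas, so every area-halving chord passes through the centre. Hence, up to null sets, $\mathfrak{P}_2^{\mathrm{conv}}$ is parametrised by the direction of the separating line.

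\emph{Symmetry reduction.} The $\cL_2$-discrepancy is invariant under the coordinate swap $(x_1,x_2)\mapsto(x_2,x_1)$, which maps a central line of slope $m$ to one of slope $1/m$. I would use this to restrict to lines meeting the bottom and top edges; by central symmetry such a line meets them at $(A,0)$ and $(1-A,1)$ for a unique $A\in[0,1]$, with $A=1$ giving the anti-diagonal $\bOmega_*^{(2)}$ and $A=\tfrac12$ the vertical bisector. Writing $\bOmega_A$ for this partition, the lemma becomes the statement that $f(A):=\E\cL_2^2(\cP_{\bOmega_A})$ is minimised over $[0,1]$ at $A=1$ with value $1/20$; existence of a minimiser is already ensured by Corollary~\ref{thm:existenceConvex}, but here it is found explicitly.

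\emph{Reduction to an integral and its evaluation.} For fixed $A\in(\tfrac12,1]$ the separating line is $u=\ell(v):=A-(2A-1)v$, and I would compute $q_1(\bx)=2|\Omega_1\cap[0,\bx[|=2\int_0^{x_2}\min\!\big(x_1,\ell(v)\big)\,\dd v$ together with $q_2=2x_1x_2-q_1$ by a short case analysis on the position of the test corner $\bx$ relative to the three regions in Fig.~\ref{fig:two} (middle); this yields piecewise polynomials. Proposition~\ref{prop:3} then gives, using $q_1+q_2=2x_1x_2$, the clean identity $f(A)=\tfrac14\int_{[0,1]^2}\big(q_1(1-q_1)+q_2(1-q_2)\big)\,\dd\bx=\tfrac1{72}+\tfrac12\int_{[0,1]^2}q_1q_2\,\dd\bx$. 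Evaluating the last integral produces $f$ as an explicit polynomial in $A$; as sanity checks one finds $f(0)=23/360$, $f(\tfrac12)=1/18$ (consistent with the vertical-strip computation in Example~1), and $f(1)=\tfrac1{72}+\tfrac12\cdot\tfrac{13}{180}=\tfrac1{20}$.

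\emph{Minimisation and the main obstacle.} It remains to minimise $f$ on $[0,1]$; I expect $f$ (equivalently $\int q_1q_2\,\dd\bx$) to be strictly decreasing, so that the minimum is attained at the endpoint $A=1$, i.e. at the anti-diagonal, with the asserted value $0.05$. The hard part will be entirely computational: the integrand $q_1q_2$ is supported only where $[0,\bx[$ is actually cut by the line, and the formula for $q_1$ (hence the shape of this support) changes across several sub-cases governed by whether $A\lessgtr\tfrac12$ and by the location of $\bx$. Carrying these cases correctly through the double integral, and then verifying monotonicity of the resulting polynomial on the whole interval $[0,1]$ rather than merely comparing the endpoint values, is where the care lies.
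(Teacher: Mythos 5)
Your proposal is correct and takes essentially the same route as the paper's own proof: reduce $\mathfrak{P}_2^{\mathrm{conv}}$ to chords through the centre $(1/2,1/2)$, use the main-diagonal reflection symmetry to parametrise by the bottom-edge intersection $A\in[0,1]$, evaluate $\E\cL_2^2(\cP_{\bOmega_A})=\tfrac1{72}+\tfrac12\int_{[0,1]^2}q_1q_2\,\dd\bx$ (your identity from Proposition \ref{prop:3} is exactly the formula the paper quotes from the literature, via $q_1=2f$), and minimise the resulting piecewise cubic, which is indeed strictly decreasing on $[0,1]$ with minimum $1/20$ at the anti-diagonal $A=1$; your endpoint checks $f(0)=23/360$, $f(1/2)=1/18$, $f(1)=1/20$ all agree with the paper's explicit polynomials. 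The only cosmetic differences are your parallel-shift argument for why area-halving chords pass through the centre (the paper uses a point-reflection argument) and your monotonicity check in place of the paper's direct minimisation of the two cubics.
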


\begin{proof}
Let $\Omega_1$ and $\Omega_2$ be two convex sets  that partition $[0,1]^2$ and have the same content. 
By convexity, the intersection of  $\Omega_1$ and $\Omega_2$ is contained in a line $\ell$.  
The midpoint $p=(1/2,1/2)$ of $[0,1]^2$  must be contained in one of these sets, so let us assume $p\in \Omega_1$. 
As the reflection $\ell'$ of $\ell$ at $p$ is parallel to $\ell$, the reflection $\Omega_1'$ of $\Omega_1$ at $p$ must contain $\Omega_2$. By the equivolume property we have $|\Omega_2|=|\Omega_1|=|\Omega_1'|$, so $\Omega_2=\Omega_1'$ up to a Lebesgue-null set. As $\Omega_2$ and $\Omega_1'$ are closed and convex we must even have $\Omega_2=\Omega_1'$, so 
$\Omega_2$ is the reflection of $\Omega_1$ at $p$ and we conclude $p\in \ell$. 

As  $\bOmega_{*}^{(2)}$ and $\E \cL_2^2(\cP_{\bOmega})$ are unaltered if all sets in the partition are reflected at the main diagonal  of $[0,1]^2$, we may assume from now on that $\ell$ hits the $x$-axis in a point $A\in [0,1]$; see Fig.~\ref{fig:two} (left). Note that $\bOmega_{*}^{(2)}$ corresponds to $A=1$. For fixed $A$, we assume from now on that  $\Omega_1$ is the partitioning set that contains the left, vertical edge of the unit square.

To calculate the expected $\cL_2$-discrepancy, we use the formula 
\begin{align}\nonumber 
\E \cL_2^2 (\cP_\bOmega ) &= \frac{1}{72} + 2 \int_{[0,1]^2}f(\bx) x_1x_2 - f(\bx)^2 \ \dd \bx
\\&= \frac{1}{72} + 2 \int_{\Omega_2}f(\bx) x_1x_2 - f(\bx)^2 \ \dd \bx \label{eq:conv2equi}
\end{align}
from \cite{stefan2}, in which $f(\bx)= | \Omega_1 \cap [0,\bx] | $, and where we used for the second equality that the integrand vanishes whenever $\bx\in \Omega_1$.

We distinguish the three cases $A=[0,1/2)$, $A=(1/2,1]$ and $A=1/2$.
The special case $A=1/2$ gives the vertical strip partition for $N=2$ with expected discrepancy $1/18=0.055\ldots$ according to \eqref{eq:vertStrip}.

Now assume that $A=(1/2,1]$. In this case, the separating line  $\ell$ has the equation
$$ y = \frac{1}{1-2A} x - \frac{A}{1-2A}. $$
We partition $\Omega_2$ into the two sets $\Omega_2^+=\{(x,y)\in [0,1]^2: x\ge A\}$ and $\Omega_2^-=\Omega_2\setminus\Omega_2^+$; see Fig.~\ref{fig:two} (middle). For $\bx\in \Omega_2^+$  we have
$$ f(x,y) 
=Ay-(2A-1)\frac{y^2}{2}, $$
so
\begin{align*}
 2 \int_{\Omega_2^+}f(\bx) x_1x_2 - f(\bx)^2 \ \dd \bx 
 =\frac1{120} (-2A^3-11A^2+10A+3),
\end{align*}
whereas for $\bx\in \Omega_2^-$ we have
\begin{align*}
f(x,y) 
=(1-2A)\frac{y^2}{2}+Ay+\frac{(A-x)^2}{2(1-2A)},
\end{align*}
which results in
\begin{align*}
  2 \int_{\Omega_2^-}f(\bx) x_1x_2 - f(\bx)^2 \ \dd \bx 
	=\frac1{360} (1-2A)^2  (11+2A).
\end{align*}
In total, we obtain for $A \in (1/2,1]$
by inserting the contributions of both cases into \eqref{eq:conv2equi} that 
$$
\E \cL_2^2(\cP_{\bOmega} ) = \frac{1}{360}(2A^3 +3A^2-12 A+25),
$$
which attains  its minimum for $A=1$ with a value of $1/20=0.05$; see Fig. \ref{plot:fam1}.
Note that as $A\to 1/2$ this function approaches $0.055\ldots$.

We can analyse the last case $A=[0,1/2)$ in a similar fashion and obtain 
$$ 
\E \cL_2^2(\cP_{\bOmega} ) = \frac{1}{360}(-6A^3+3A^2-6A+23),
$$
which approaches its infimum as $A\rightarrow 1/2$ with a value of $0.055\ldots$; see Fig. \ref{plot:fam1}. This proves the assertion.
\end{proof}

\begin{figure}
\includegraphics[scale=0.6]{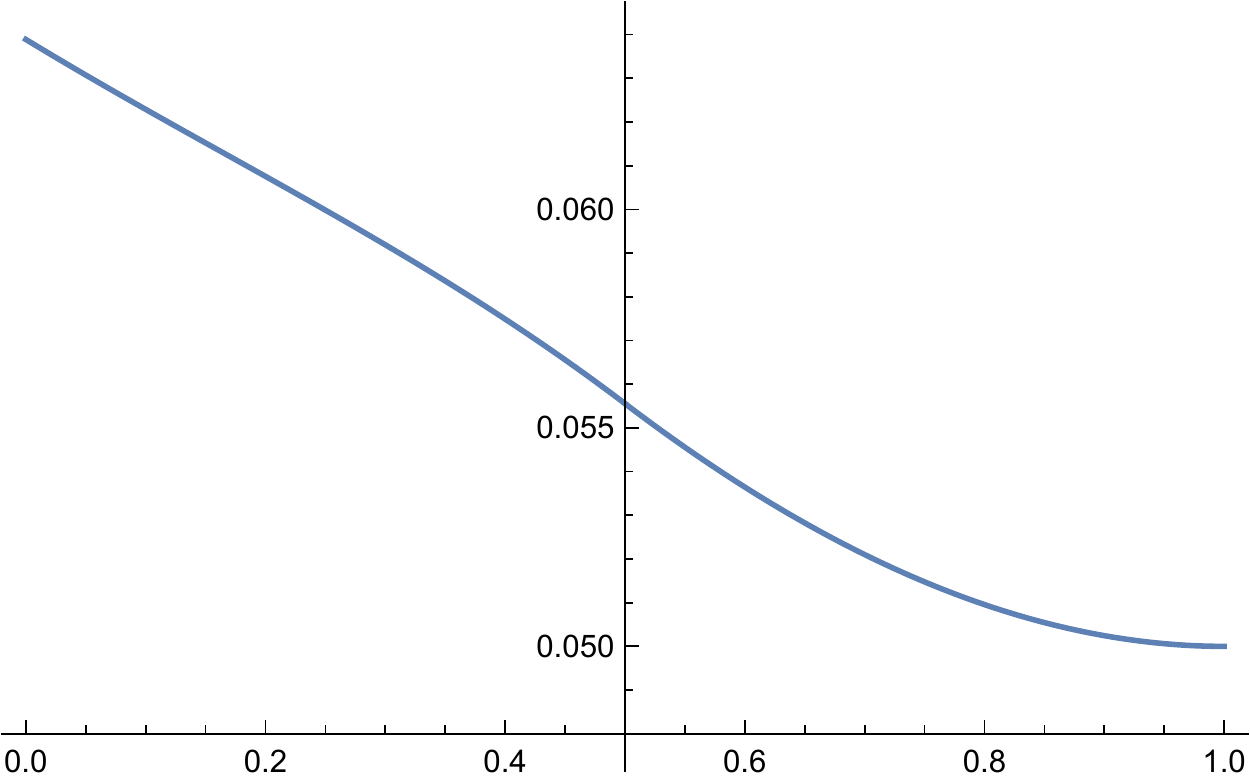} \ \ \
\includegraphics[scale=0.6]{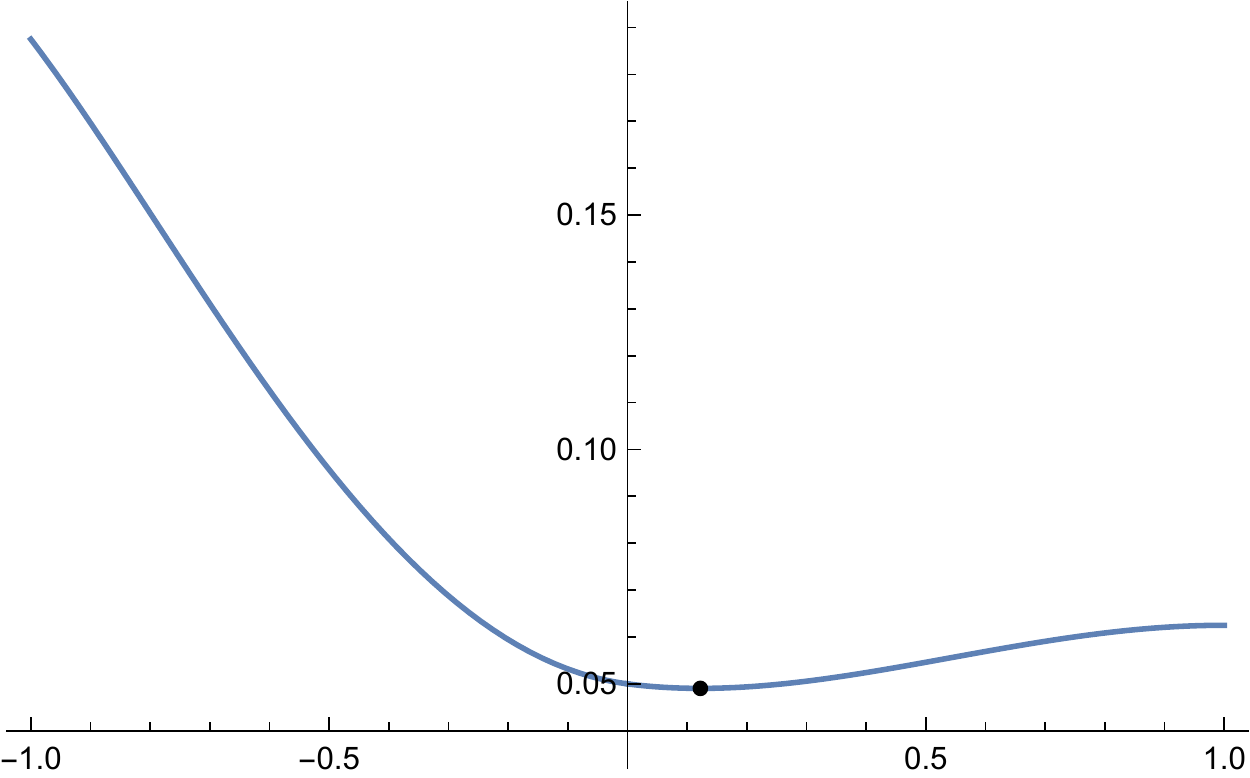}
\caption{Left: Expected discrepancy of partitions in Example 2. Parameter $A$ plotted on $x$-axis. Right: Expected discrepancy of partitions in Example 3. The $x$-axis encodes parameters $B=x+1$ in $[-1,0]$ and $A=x$ in $[0,1]$.}
\label{plot:fam1}
\end{figure}


\section{An infinite family and numerical results} \label{sect:4}

\subsection{An infinite family and special cases}\label{subsec:omega}
Motivated by the result of the previous section we define an $(N-1)$-parameter family of partitions of the unit square generated by parallel lines which are orthogonal to the diagonal of the square. For $N\in \N$ and a vector ${\bv v}=(v_1,\ldots,v_{N-1})\in [0,\sqrt{2}]^{N-1}$ with $0<v_1<v_2<\cdots<v_{N-1}<\sqrt{2}$ we define a partition $\bOmega_{\mathbf{v}}^{(N)}$ as follows. If $\ell_i$ denotes the line with slope $-1$ hitting the first closed quadrant and with distance $v_i$ from the origin, then $[0,1]^2\setminus \{\ell_{1},\ldots, \ell_{{N-1}}\}$ has $N$ connected components. Its closures are denoted by $\Omega_1,\ldots\Omega_N$, where $\Omega_i$ is positioned between $\ell_{{i-1}}$ and $\ell_i$ if we use the convention $v_0=0$ and $v_N=\sqrt 2$.  Examples are illustrated in Fig. \ref{fig:algorithm}.
We will often use the abbreviation $\bOmega_{v_1,\ldots,v_{N-1}}^{(N)}$ for the partition $\bOmega_{(v_1,\ldots,v_{N-1})}^{(N)}$.

An interesting special case are the equivolume partitions denoted by $\bOmega_{\ast}^{(N)}$. They are defined via
$\bOmega_{\ast}^{(N)}= \bOmega_{v_1,\ldots,v_{N-1}}^{(N)}$ with
$$ v_i =  \sqrt{\frac{i}{N}},$$
for $1 \leq i \leq \lfloor N/2 \rfloor$ and 
$$ v_i = \sqrt{2} - \sqrt{\frac{N-i}{N}},$$
for $\lfloor N/2 \rfloor +1 \leq i \leq N-1$.

As a side remark, we mention also the partition generated by a set of equidistant points; i.e. $v_i = \sqrt{2} i/N$ for a given $N>1$. This is a simple example of a family of partitions that is not equivolume for any $N$. However, by pairing complementary sets such that the union has volume $2/N$, it is possible to obtain equivolume partitions for every even $N$ with $N/2$ points into non-connected sets.

\subsection{Example 3: Relaxing the volume constraint I} \label{sec4:ex3}
We have seen in Example 2 that $\bOmega_{\ast}^{(2)}$ gives the lowest expected discrepancy among all convex equivolume partitions into two sets. We will now show that this partition can be improved if the equivolume condition is dropped by shifting the line along the diagonal, that is, by considering partitions in the class $\bOmega_{v}^{(2)}$ with $v\in [0,\sqrt2 ]$; see  Fig. \ref{N2min}.

\begin{lemma}\label{lem:diagVersch}
We have
$$ \min_{v \in [0,\sqrt{2}]}  \E \cL_2^2(\cP_{\bOmega_{v}^{(2)}} ) = \E \cL_2^2(\cP_{\bOmega_{v^{\ast}}^{(2)}} ) \approx  0.049,$$
for $v^{\ast}= 0.793398\ldots$. 
\end{lemma}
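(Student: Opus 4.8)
The plan is to compute $\E\cL_2^2(\cP_{\bOmega_v^{(2)}})$ explicitly as a function of the single parameter $v\in[0,\sqrt2]$ and then minimize it by elementary calculus. Since the partition $\bOmega_v^{(2)}$ consists of two convex (though no longer equal-volume) regions cut off by a line of slope $-1$ at distance $v$ from the origin, the stratified sample places one uniform point in each region independently. The key formula to exploit is \eqref{eq:pthcnteredmean} with $p=2$, namely $\E\cL_2^2(\cP_{\bOmega})=\int_{[0,1]^2}\Var(Z_\bx)\,\dd\bx$, which holds here since we must first check—or rather account for the fact—that the partition need \emph{not} be equivolume. Because \eqref{eq:pthcnteredmean} relied on unbiasedness via Proposition \ref{prop:1}, for the non-equivolume case I would instead revert to the general identity $\E\cL_2^2=\int_{[0,1]^2}\E(Z_\bx-|[0,\bx]|)^2\,\dd\bx$ and expand the square, or equivalently use the formula from \cite{stefan2} already invoked in Example 2,
\[
\E\cL_2^2(\cP_{\bOmega})=\frac{1}{72}+2\int_{[0,1]^2}\bigl(f(\bx)\,x_1x_2-f(\bx)^2\bigr)\,\dd\bx,
\]
with $f(\bx)=|\Omega_1\cap[0,\bx]|$, where $\Omega_1$ is the triangular corner region near the origin.

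First I would parametrize the geometry: the cutting line $\ell_v$ has equation $x_1+x_2=v\sqrt2$, so for $v\le 1/\sqrt2$ the set $\Omega_1$ is the small triangle $\{x_1+x_2\le v\sqrt2\}$ near the origin with the full triangle lying inside the square, while for $1/\sqrt2< v\le\sqrt2$ the line clips the top and right edges and $\Omega_1$ becomes a pentagon. This forces a case distinction at $v=1/\sqrt2$ (and by the symmetry of the square across the anti-diagonal, the regime $v>1/\sqrt2$ mirrors $v<1/\sqrt2$ but with $\Omega_1$ and $\Omega_2$ swapped). Within each regime I would write down $f(\bx)=|\Omega_1\cap[0,\bx]|$ as an explicit piecewise-polynomial function of $(x_1,x_2)$, splitting the unit square into the subregions determined by whether $[0,\bx]$ lies entirely below $\ell_v$, straddles it, or lies entirely above, much as $\Omega_2$ was split into $\Omega_2^+$ and $\Omega_2^-$ in the proof of Lemma \ref{lem:n2}. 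Substituting into the integral and carrying out the (routine but tedious) polynomial integration yields $\E\cL_2^2(\cP_{\bOmega_v^{(2)}})$ as an explicit polynomial—or piecewise polynomial—in $v$.

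With the closed-form expression $g(v):=\E\cL_2^2(\cP_{\bOmega_v^{(2)}})$ in hand, I would differentiate, solve $g'(v)=0$, and identify the minimizing root $v^\ast\approx0.793398$, checking that it lies in the correct regime and that $g''(v^\ast)>0$. I would also compare the boundary/transition values: at $v=1/\sqrt2\approx0.707$ one recovers the anti-diagonal equivolume partition $\bOmega_*^{(2)}$ with value $0.05$ from Lemma \ref{lem:n2}, so establishing $g(v^\ast)<0.05$ confirms that relaxing the equivolume constraint strictly improves the discrepancy, as claimed. The explicit minimal value would be read off as $g(v^\ast)\approx0.049$.

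The main obstacle I anticipate is not conceptual but computational bookkeeping: $v^\ast$ is an irrational root of a cubic (or higher) polynomial rather than a clean value like the $A=1$ optimum in Example 2, so the critical-point equation will not factor nicely and $v^\ast$ must be pinned down numerically. Care is also needed in the piecewise description of $f$—the subdivision of the unit square into regions where $[0,\bx]$ does or does not meet the cut changes with $v$—so the integration must be organized to handle the moving boundaries correctly, and the two regimes $v\lessgtr 1/\sqrt2$ must be stitched together consistently at the transition point. Once the integral is set up correctly, the rest is a finite (if lengthy) calculus exercise.
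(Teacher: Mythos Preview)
Your approach is essentially the paper's own: parametrize the cut (the paper uses $A=\sqrt2\,v-1$ for $v\ge 1/\sqrt2$ and $B=\sqrt2\,v$ for $v\le 1/\sqrt2$), compute $\E\cL_2^2$ explicitly in each regime by piecewise integration over the subregions determined by the line, and minimize by calculus to locate the critical point $A\approx0.122$ (i.e.\ $v^*\approx0.793$) with value $\approx0.0490$, while the other regime attains its infimum $0.05$ at the anti-diagonal. Two small cautions: the resulting expression is piecewise \emph{rational} in $v$, not polynomial, because the set volumes enter the denominators once the partition is non-equivolume; and the displayed formula from \cite{stefan2} with $f=|\Omega_1\cap[0,\bx]|$ as written is tailored to the equivolume case, so you should rely on the general identity $\E\cL_2^2=\int\E(Z_\bx-x_1x_2)^2\,\dd\bx$ that you mention first (and note that your anti-diagonal ``symmetry'' is only a symmetry of the partitions, not of $g(v)$, since the test boxes are anchored at the origin).
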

A comparison with Lemma \ref{lem:n2} shows that $\bOmega_{v^{\ast}}^{(2)}$ has a smaller mean $\cL_2$-discrepancy than any convex equivolume partion when $N=2$.
\begin{proof}
For a given point $v\in [0,\sqrt{2}]$ we denote the corresponding intersection of the line with the boundary of the square with $(A,1)$ if $v \in [\sqrt{2}/2,\sqrt{2}]$ and with $(B,0)$ for $v \in [0,\sqrt{2}/2]$.
If $v \in [\sqrt{2}/2,\sqrt{2}]$, then $A=\sqrt{2}v-1$. The two partitioning sets have volumes $1-\frac{(1-A)^2}{2}$ and $\frac{(1-A)^2}{2}$ and points on the separating line satisfy $y=-x+1+A$. In total, we obtain for $A \in [0,1]$ and in a similar fashion as in the proof of Lemma \ref{lem:n2} that
$$ 
\E \cL_2^2 (\cP_A) = \frac{-18 - 30 A + A^2 - 36 A^3 + 52 A^4 - 12 A^5 - 2 A^6}{-360 - 720 A + 360 A^2}.
$$
This function attains its minimum for $A=0.122034$ with value $0.04904\ldots$; see Fig. \ref{plot:fam1}. 

Next, if $v \in [0,\sqrt{2}/2]$, then $B=\sqrt{2}v$. The two partitioning sets have volumes $B^2/2$ and $1-B^2/2$ and points on the separating line satisfy $y=-x+B$. Similar to the previous considerations we split the integral into subintegrals and distinguish the different cases on which the intersections can be described with the same function. We obtain
\[\E \cL_2^2 (\cP_B) =
 \frac{-135 + 120 B + 175 B^2 - 288 B^3 + 112 B^4 - 2 B^6}{360 (B^2-2)}
\]
for $B \in [0,1]$, which attains its minimum for $B=1$ with value $0.05$; i.e. the minimum is attained for the anti-diagonal; see Fig. \ref{plot:fam1}.
This concludes the proof.
\end{proof}

\begin{center}
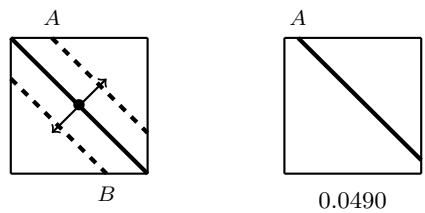
\begin{figure}[h!]
\begin{tikzpicture}[scale=1.8]
\draw [thick] (0,0) -- (1,0);
\draw [thick] (1,1) -- (1,0);
\draw [thick] (0,1) -- (1,1);
\draw [thick] (0,0) -- (0,1);
\draw [ultra thick] (0,1) -- (1,0);
\draw [ultra thick, dashed] (0.3,1) -- (1,0.3);
\draw [thick,->] (0.5,0.5)--(0.7,0.7);
\draw [thick,->] (0.5,0.5)--(0.3,0.3);
\draw [ultra thick, dashed] (0,0.7) -- (0.7,0);
\node at (0.5, 0.5) {$\bullet$};
\node at (0.3, 1.15) {\scriptsize $A$};
\node at (0.7, -0.15) {\scriptsize $B$};

\draw [thick] (2,0) -- (3,0);
\draw [thick] (3,1) -- (3,0);
\draw [thick] (2,1) -- (3,1);
\draw [thick] (2,0) -- (2,1);
\draw [ultra thick] (2.1,1) -- (3,0.1);
\node at (2.5, -0.2) {\footnotesize 0.0490};
\node at (2.1, 1.15) {\scriptsize $A$};
\end{tikzpicture}
\caption{{\footnotesize Left: One-parameter model of partitions used in Lemma \ref{lem:diagVersch}. Right: The partition of this family with the smallest expected discrepancy.}} \label{N2min}
\end{figure}
\end{center}

\subsection{Example 4: Relaxing the volume constraint II} \label{sec4:ex4}
In this section we extend the results of Examples 2 and 3 to the case $N=3$. The partitions can still be analysed explicitly and it turns out that there is a unique partition that minimises the expected discrepancy; see Fig. \ref{N3min}.
However, the full analysis consists of a tedious case-by-case study and, hence, we do not fully outline the proof of this assertion in the following, but report the most interesting cases only.  We leave the analysis of the remaining cases -- which we carried out and which follows along the same lines as our proof -- to the interested reader.

In general, let $0< v_1 < v_2 < \sqrt{2}$ and denote the three sets of the partition with $\Omega_1, \Omega_2$ and $\Omega_3$, as described in Subsection \ref{subsec:omega}. Associated to the three sets, there are three indicator functions $\chi_1, \chi_2$ and $\chi_3$ with
$$ \chi_j(x,y) := 
\begin{cases}
1 & \text{ if } \bv{X}_j \in [0,x]\times [0,y]
, \\[3pt]
0 & \text { else},
\end{cases} $$
where $\bv{X}_j$ is the random point in $\Omega_j$. 
Setting $$\#(x,y) := \chi_1(x,y) + \chi_2(x,y) + \chi_3(x,y),$$
we get for the expected $\cL_2$-discrepancy
$$ \E\left ( \frac{\#(x,y)}{3} - x y \right )^2 = \frac{1}{9} \E(\#(x,y) )^2 - \frac{2}{3} x y \E(\#(x,y)) + x^2 y^2, $$
and since $\#(x,y)$ is a Poisson-binomial distributed random variable we have that
$$\E(\#(x,y)) = q_1(x,y) + q_2(x,y) + q_3(x,y),$$
in which, as in \eqref{eq:qi},
$$ 
q_j(x,y) = \PP(\chi_j=1) =  \frac{| [0,x]\times [0,y] \cap \Omega_j |}{|\Omega_j|}.
$$
The analysis proceeds now via two levels of case distinctions. 
The first level concerns the actual partitions into three sets that we are considering; see Fig. \ref{fig:cases1}.
Within each of the four cases, we partition the unit square in dependence of $v_1$ and $v_2$ into sets in which the probabilities $q_j$ have the same closed form in $x,y$, thus providing closed formulas for $\E(\#(x,y))$ and $\E(\#(x,y))^2$; see Fig. \ref{fig:cases2}.
Next, we compute the expected discrepancy, which is an integral over the unit square, as a sum of integrals over the different closed form expressions according to the subcase-partition.

\begin{center}
\begin{figure}[h!]
\begin{tikzpicture}[scale=1.8]
\draw [thick] (0,0) -- (1,0)--(1,1)--(0,1)--(0,0);
\draw [dashed,thick] (1,0) -- (0,1);
\draw [ultra thick] (0.3,0) -- (0,0.3);
\draw [ultra thick] (0.7,0) -- (0,0.7);
\node at (0.15,0.15) {$\bullet$};
\node at (0.35,0.35) {$\bullet$};

\draw [thick] (2,0) -- (3,0)--(3,1)--(2,1)--(2,0);
\draw [dashed, thick] (3,0) -- (2,1);
\draw [ultra thick] (2.2,1) -- (3,0.2);
\draw [ultra thick] (2,0.7) -- (2.7,0);
\draw [dotted] (2.7,0) -- (2.7,0.7) -- (2,0.7);
\node at (2.35,0.35) {$\bullet$};
\node at (2.6,0.6) {$\bullet$};

\node at (2.7,-0.15) {\footnotesize {A}};
\node at (3.15,0.2) {\footnotesize{B}};

\draw [thick] (4,0) -- (5,0)--(5,1)--(4,1)--(4,0);
\draw[dashed, thick] (5,0)--(4,1);
\draw [ultra thick] (4.6,1) -- (5,0.6);
\draw [ultra thick] (4,0.7) -- (4.7,0);
\draw [dotted] (4.7,0) -- (4.7,0.7) -- (4,0.7);
\node at (4.35,0.35) {$\bullet$};
\node at (4.8,0.8) {$\bullet$};

\node at (4.7,-0.15) {\footnotesize {A}};
\node at (5.15,0.6) {\footnotesize{B}};

\draw [thick] (6,0) -- (7,0)--(7,1)--(6,1)--(6,0);
\draw[dashed, thick] (7,0)--(6,1);
\draw [ultra thick] (6.3,1) -- (7,0.3);
\draw [ultra thick] (6.7,1) -- (7,0.7);
\node at (6.65,0.65) {$\bullet$};
\node at (6.85,0.85) {$\bullet$};
\end{tikzpicture}
\caption{{\footnotesize The four cases we distinguish. The black dots correspond to the points $v_1$ and $v_2$.}} \label{fig:cases1}
\end{figure}
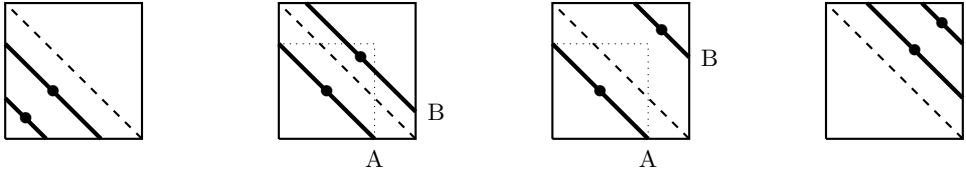
\end{center}

In the following, we first focus on the third case in Fig. \ref{fig:cases1}. In this case we have  $0 < v_1 < 1/\sqrt{2} < v_2 < \sqrt{2}$ with $v_2 \in [2v_1, \sqrt{2}]$ and $A= v_1\sqrt{2}$, $B= v_2\sqrt{2}-1$ such that for given $A$ we have that $2A-1 \leq B \leq 1$. 
It turns out that we can calculate the expected discrepancy as a rational function of $A$ and $B$, which has a unique minimum.
For simplicity we restrict considerations to $1/2\leq A \leq 1$ 
in Lemma \ref{lem:n3}.

\begin{lemma} \label{lem:n3}
Let $1/(2 \sqrt{2}) \leq v_1 < 1/\sqrt{2}$ and $2v_1 \leq v_2 \leq \sqrt{2}-\frac{\sqrt{2}-2v_1}{2}$.
Let $\bOmega_{v_1,v_2}^{(3)}$ be the corresponding partition of the unit square into three sets. Then
$$ 
\min_{v_1,v_2}   \E \cL_2^2(\cP_{\bOmega_{v_1,v_2}^{(3)}} ) = \E \cL_2^2(\cP_{\bOmega_{v_1^{\ast},v_2^{\ast}}^{(3)}} ) =0.0267804\ldots,
$$
for $v_1^{\ast} = 0.5130\ldots$ and $v_2^{\ast} = 1.1249\ldots$
\end{lemma}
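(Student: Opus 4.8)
The plan is to parametrize the partition $\bOmega_{v_1,v_2}^{(3)}$ in the third case of Fig.~\ref{fig:cases1} using the variables $A=v_1\sqrt2$ and $B=v_2\sqrt2-1$, subject to the stated constraints $1/2\le A<1$ and $2A-1\le B\le 1$, and to reduce the computation of $\E\cL_2^2(\cP_{\bOmega_{v_1,v_2}^{(3)}})$ to an explicit rational function of $A$ and $B$. The starting point is the expansion for the expected $\cL_2$-discrepancy given just before the statement, namely
\begin{equation*}
\E\cL_2^2(\cP_{\bOmega}) = \int_{[0,1]^2}\left[\frac19\,\E\big(\#(x,y)\big)^2 - \frac23\,xy\,\E\big(\#(x,y)\big) + x^2y^2\right]\dd(x,y),
\end{equation*}
together with the facts that $\#(x,y)$ is Poisson-binomial with parameters $q_1,q_2,q_3$, so that $\E\#(x,y)=q_1+q_2+q_3$ and $\E(\#(x,y))^2 = \big(\sum_j q_j\big)^2 + \sum_j q_j(1-q_j)$. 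Thus everything is determined once the three success probabilities $q_j(x,y)=|[0,x]\times[0,y]\cap\Omega_j|/|\Omega_j|$ are known as explicit functions of $(x,y)$.

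First I would compute the normalizing volumes $|\Omega_1|,|\Omega_2|,|\Omega_3|$ of the three slices as functions of $A$ and $B$ by elementary area computations of the regions cut off by the two lines $\ell_1,\ell_2$ of slope $-1$. Next I would carry out the second level of case distinction indicated in Fig.~\ref{fig:cases2}: for fixed $A,B$ the unit square splits into finitely many polygonal regions on each of which the three intersection areas $|[0,x]\times[0,y]\cap\Omega_j|$, and hence each $q_j(x,y)$, have a single closed polynomial form in $x,y$. On each such region the integrand above is a polynomial in $x,y$ (with coefficients rational in $A,B$), so the region integrals evaluate to explicit rational expressions. Summing the contributions over all subregions yields $\E\cL_2^2(\cP_{\bOmega_{v_1,v_2}^{(3)}})$ as a single rational function $F(A,B)$.

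Having $F(A,B)$ in hand, the final step is a standard two-variable minimization over the admissible region $\{1/2\le A<1,\ 2A-1\le B\le 1\}$: set $\partial_A F=\partial_B F=0$, solve the resulting algebraic system for the interior critical point, and compare its value with the values on the boundary of the parameter region. This should locate the unique minimizer at $(v_1^{\ast},v_2^{\ast})=(0.5130\ldots,1.1249\ldots)$ with minimal value $0.0267804\ldots$, and verifying that the Hessian is positive definite there (or simply that $F$ exceeds this value on the boundary) confirms it is the global minimum on this piece.

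The main obstacle is bookkeeping rather than conceptual: correctly identifying the subregion decomposition in Fig.~\ref{fig:cases2} and tracking which closed form for each $q_j$ applies on which polygon, since an error in the geometry of a single subregion propagates into the final rational function. A secondary difficulty is that $F(A,B)$ is a ratio of polynomials whose denominator comes from the non-constant volumes $|\Omega_j|$, so the critical-point equations are genuinely algebraic and the root $(v_1^{\ast},v_2^{\ast})$ is not expressible in a simple closed form; I expect to certify it numerically (e.g.\ by evaluating $F$ and its gradient) rather than solve the system symbolically. The convexity-type comparison argument used in Corollary~\ref{CORnew} is unavailable here because the volumes vary, so the minimization must be done directly on the explicit rational function.
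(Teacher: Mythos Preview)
Your proposal is essentially the same approach as the paper's: parametrize by $A=v_1\sqrt2$, $B=v_2\sqrt2-1$, compute the three success probabilities $q_j$ on a finite polygonal subdivision of $[0,1]^2$ (the six regions of Fig.~\ref{fig:cases2}, using the diagonal symmetry), integrate the resulting piecewise polynomial to obtain a single rational function of $(A,B)$, and minimize numerically with a computer algebra system. One small slip: the upper bound coming from the lemma's hypothesis $v_2\le \sqrt2-\tfrac{\sqrt2-2v_1}{2}$ is $B\le A$, not $B\le 1$; this matters because the six-region decomposition in Fig.~\ref{fig:cases2} (right) is valid precisely when $B\le A$, so your stated parameter range would otherwise require further subcases.
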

\begin{proof}
We set $A= v_1\sqrt{2}$, $B= v_2\sqrt{2}-1$ such that for given $1/2 \leq A \leq 1$ we have $2A-1 \leq B \leq A$.

We get $|\Omega_1|= A^2/2$, $|\Omega_3| = (1-B)^2/2$ and $|\Omega_2| = 1-|\Omega_1| - |\Omega_3|$. 
To calculate the expected discrepancy we subdivide the unit square into 6 sets, $S_1, \ldots, S_{6}$, as illustrated in Fig. \ref{fig:cases2} and we use the symmetry along the diagonal in cases II-VI.
For a point $(x,y) \in S_i$ we denote the expected discrepancy function by $f_i(x,y)$   and we calculate the expected discrepancy for the whole partition as
\begin{equation} \label{eqf_i}
\E \cL_2^2(\cP_{\bOmega_{v_1,v_2}^{(3)}} ) = \int_{S_1} f_1(x,y) \,\dd(x,y) + 2 \sum_{i=2}^6 \int_{S_i} f_i(x,y) \,\dd(x,y). 
\end{equation} 

For illustration, we give the calculations for the first case and refer to Appendix B for the other (equally elementary, but much more technical) cases.
\paragraph{Case I} Let $(x,y) \in \Omega_1=S_1$. Then $q_2(x,y)=q_3(x,y)=0$ and $q_1(x,y)=2xy/A^2$. Hence, we get
$$f_1=\E\left ( \frac{\#(x,y)}{3} - x y \right )^2 = \frac{x y(2 + 3 (-4 + 3A^2)x y)}{9 A^2},$$
and
$$ \int_0^{A} \int_{0}^{A-x} \frac{x y(2 + 3 (-4 + 3A^2)x y)}{9 A^2} \dd y \dd x = \frac{1}{540} A^2 (5-4A^2 + 3A^4). $$
\paragraph{Cases II - VI} With similar calculations as in Case I we can obtain expressions $f_2, \ldots, f_6$ for the expected value of the discrepancy function for points $(x,y)$ in each of the sets and in dependence of the parameters $A$ and $B$; see  Appendix B for details. Integrating these functions over their respective domains and summing the values, 
gives us the following rational function in $A$ and $B$ which we can minimize over $1/2\leq A \leq 1$ and $2A-1 \leq B \leq A$ in order to obtain the two parameters $A$ and $B$ that generate the partition with the smallest expected discrepancy in this family; i.e. 


\begin{align*}
&\E \cL_2^2(\cP_{\bOmega_{v_1,v_2}^{(3)}} ) =
 \frac{1}{12960 A^2 (-1 + B)^2 (-1 + A^2 + (-2 + B) B)} \times \\
&\hspace{0.5cm} \left (128 B^7 - 64 B^8 + A^{10} (-1440 + 2880 B - 1440 B^2)  \right. \\
 &\hspace{0.5cm}+A^9 (2592 - 2592 B - 2592 B^2 + 2592 B^3)  \\
&\hspace{0.5cm}+ A^8 (-3832 + 3936 B - 1248 B^2 + 5760 B^3 - 4680 B^4)  \\
&\hspace{0.5cm}+ A^7 (10368 - 12288 B - 4032 B^2 + 8640 B^3 - 6336 B^4 + 4032 B^5)  \\
&\hspace{0.5cm}+ A^6 (-13936 + 19408 B + 4360 B^2 - 12864 B^3 - 1104 B^4 + 6480 B^5 - 
    3240 B^6)  \\
 &\hspace{0.5cm}+A^5 (5472 - 8544 B - 8736 B^2 + 20384 B^3 - 5280 B^4 - 2400 B^5 - 
    1440 B^6 + 1440 B^7)  \\
&\hspace{0.5cm} +A^4 (1908 - 1920 B + 5496 B^2 - 8976 B^3 - 480 B^4 + 5760 B^5 - 
    1608 B^6 - 144 B^7 - 36 B^8) \\
&\hspace{0.5cm}   +  A^3 (-480 - 1440 B + 1920 B^2 + 3840 B^3 - 7840 B^4 + 3104 B^5)  \\
&\hspace{0.5cm} +A^2 (-787 + 1740 B + 528 B^2 - 6292 B^3 + 9198 B^4 - 3492 B^5 \\
 &\hspace{1.5cm}   -212 B^6 + 84 B^7 + 237 B^8 - 72 B^9 - 36 B^{10})\\
 &\hspace{0.5cm} \left.  + A (-768 B^6 + 384 B^7) \right)
\end{align*}
This function can be minimised using a standard computer algebra system.
The minimum of this function is $0.0267804$ for the parameter values $A= 0.725501$ and $B= 0.590843$. 
These parameter values correspond to $v_1 = 0.5130\ldots$ and $v_2 = 1.1249\ldots$.
\end{proof}




In a similar fashion we can analyse the second case in Fig. \ref{fig:cases1}.

\begin{lemma} \label{lem:n3a}
Let $1/(2 \sqrt{2}) \leq v_1 < 1/\sqrt{2}$ and $\frac{1}{\sqrt{2}} \leq v_2 \leq 2 v_1$.
Let $\bOmega_{v_1,v_2}^{(3)}$ be the corresponding partition of the unit square into three sets. Then
$$\min_{v_1,v_2}  \E \cL_2^2(\cP_{\bOmega_{v_1,v_2}^{(3)}} ) = \E \cL_2^2(\cP_{\bOmega_{v_1^{\ast},v_2^{\ast}}^{(3)}} ) =0.0268054\ldots,$$
for $v_1^{\ast} = 0.5329\ldots$ and $v_2^{\ast} = 1.06582\ldots$
\end{lemma}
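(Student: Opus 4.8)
The plan is to follow the scheme used for Lemma \ref{lem:n3}, changing only the geometric case analysis to account for the configuration $v_2 \le 2 v_1$. First I would reparametrize by $A = v_1\sqrt 2$ and $B = v_2\sqrt 2 - 1$, under which the hypotheses $1/(2\sqrt 2) \le v_1 < 1/\sqrt 2$ and $1/\sqrt 2 \le v_2 \le 2v_1$ become $1/2 \le A < 1$ and $0 \le B \le 2A - 1$ in parameter space. Since $\ell_1$ still meets the two lower edges of the square while $\ell_2$ still meets the two upper edges, the three sets are $\Omega_1 = \{x+y\le A\}$, $\Omega_3 = \{x+y\ge B+1\}$ and $\Omega_2$ the strip between them, so the volumes coincide with those in Lemma \ref{lem:n3}: $|\Omega_1| = A^2/2$, $|\Omega_3| = (1-B)^2/2$ and $|\Omega_2| = 1 - |\Omega_1| - |\Omega_3|$. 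With these I would form $q_j(x,y) = |[0,x]\times[0,y]\cap\Omega_j|/|\Omega_j|$ and compute the pointwise expected discrepancy $f(x,y) = \E(\#(x,y)/3 - xy)^2$ from the Poisson-binomial identities $\E\#=\sum_j q_j$ and $\E\#^2 = \big(\sum_j q_j\big)^2 + \sum_j q_j(1-q_j)$, exactly as before.

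The decisive difference from Lemma \ref{lem:n3} is the shape of the subdivision of $[0,1]^2$ into regions on which each $q_j$ is a single polynomial. The relevant subcase boundaries are the lines $\ell_1$, $\ell_2$ together with the verticals and horizontals through their edge-intercepts, which meet at the corner $(A,A)$. Since $\ell_2$ has equation $x + y = B + 1$, this corner lies strictly below $\ell_2$ exactly when $B > 2A - 1$ (the situation of Lemma \ref{lem:n3}) and on or above $\ell_2$ when $B \le 2A - 1$ (the present regime); thus for $v_2 \le 2v_1$ the line $\ell_2$ passes below $(A,A)$ and the incidence pattern of test-rectangle corners with the three sets changes. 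I would therefore redraw the subcase partition $S_1,\ldots,S_k$ accordingly (the analogue of Fig. \ref{fig:cases2}) and exploit the reflection symmetry in the main diagonal to treat only half of the regions, as in \eqref{eqf_i}. The central triangle $S_1 = \Omega_1 = \{x+y\le A\}$ still satisfies $q_2=q_3=0$ and $q_1 = 2xy/A^2$, so it contributes the same term $\frac1{540}A^2(5 - 4A^2 + 3A^4)$ as Case I there; only the remaining, $B$-dependent regions need to be recomputed.

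Integrating $f$ over every region and summing then expresses $\E\cL_2^2(\cP_{\bOmega_{v_1,v_2}^{(3)}})$ as an explicit rational function of $A$ and $B$ on the triangle $\{1/2\le A<1,\ 0\le B\le 2A-1\}$, and the final step is to minimize it with a computer algebra system. Since the true expected discrepancy is continuous across the shared boundary $B = 2A-1$ and its global minimum over the whole family is attained in the adjacent region of Lemma \ref{lem:n3}, I expect the minimum over the present region to be attained on that boundary, i.e. at $v_2 = 2v_1$, where the optimization reduces to one variable. This is consistent with the reported optimum $v_1^* = 0.5329\ldots$, $v_2^* = 1.06582\ldots$, for which indeed $v_2^* = 2v_1^*$, with value $0.0268054\ldots$. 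The main obstacle is the accurate bookkeeping of the new subdivision: one must identify every region across which a corner $(x,y)$ changes which partitioning set it falls into, since a single omitted or misplaced region would corrupt the rational function and hence the numerical minimum. Once the subdivision is correct the area integrals are elementary and the optimization collapses to the one-dimensional problem along the active constraint.
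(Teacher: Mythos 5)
Your proposal is correct and takes essentially the same route as the paper: the same reparametrization $A=v_1\sqrt{2}$, $B=v_2\sqrt{2}-1$ with parameter region $1/2\le A\le 1$, $0\le B\le 2A-1$, the same subdivision of the unit square into six regions (exploiting diagonal symmetry) on which the $q_j$ are polynomial, integration to an explicit rational function in $A$ and $B$, and constrained minimization by computer algebra. Your prediction that the minimum sits on the boundary $B=2A-1$ (i.e.\ $v_2^\ast=2v_1^\ast$) is confirmed by the paper's computed optimum $A=0.753647$, $B=0.507294$ and is discussed explicitly in Remark \ref{rem1}.
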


\begin{proof}
The proof follows the same lines as in Lemma \ref{lem:n3}; the only difference is the subdivision of the unit square as well as the range of $B$.
We set again $A=v_1 \sqrt{2}$, $B=v_2 \sqrt{2}-1$ such that for given $1/2 \leq A \leq 1$ we have $0 \leq B \leq 2A-1$.

We get $|\Omega_1|= A^2/2$, $|\Omega_3| = (1-B)^2/2$ and $|\Omega_2| = 1-|\Omega_1| - |\Omega_3|$. 
To calculate the expected discrepancy we subdivide the unit square into 6 sets, $S_I, \ldots, S_{VI}$, as illustrated in Fig. \ref{fig:cases2} and we use again the symmetry along the diagonal in cases II-VI.
For a point $(x,y) \in S_i$ we denote the expected discrepancy function  by $f_i(x,y)$ and we calculate the expected discrepancy for the whole partition as
$$ \E \cL_2^2(\cP_{\bOmega_{v_1,v_2}^{(3)}} ) = \int_{S_1} f_1(x,y) \,\dd(x,y)  + 2 \sum_{i=2}^6 \int_{S_i} f_i(x,y)\, \dd(x,y). $$



As before, explicit expressions for $f_i$ on $S_i$, $i=1,\ldots,6$ can be obtained. They depend on the parameters $A$ and $B$. Integrating these functions over their respective domains and summing the values, 
gives us again a rational function in $A$ and $B$ which we can minimize over $1/2\leq A \leq 1$ and $0 \leq B \leq 2A-1$ in order to obtain the two parameters $A$ and $B$ that generate the partition with the smallest expected discrepancy in this family. More explicitly, we have
\begin{align} \label{eq:lem5}
&\E \cL_2^2(\cP_{\bOmega_{v_1,v_2}^{(3)}} ) =
 \frac{1}{1620 A^2 (B-1)^2 \left(A^2+(B-2) B-1\right)} \times \\
\notag & \quad \left( A^8 \left(-6 B^2+12 B-14\right)+48 A^7 B+A^6 \left(-3 B^4+12 B^3+140 B^2-544 B+283\right) \right. \\
\notag & \quad +A^5 \left(208 B^3-672 B^2+1152 B-576\right) \\
\notag & \quad + A^4 \left(-3 B^6-18 B^5-15 B^4-420 B^3+1065 B^2-942 B+333\right) \\
\notag & \quad +A^3 \left(208 B^5-440 B^4+480 B^3-480  B^2+120\right) \\
\notag & \quad +A^2 \left(-6 B^8-24 B^7+134 B^6-444 B^5+870 B^4-704 B^3+264 B^2+168 B-146\right) \\
\notag & \quad \left. +A \left(48 B^7-96 B^6\right)-8 B^8+16 B^7 \right).
\end{align}
Using again a computer algebra system we obtain that the minimum of this function is $0.0268054$ for the parameter values $A= 0.753647$ and $B= 0.507294$. These parameter values correspond to $v_1 = 0.5329\ldots$ and $v_2 = 1.06582\ldots$.
\end{proof}

\begin{center}
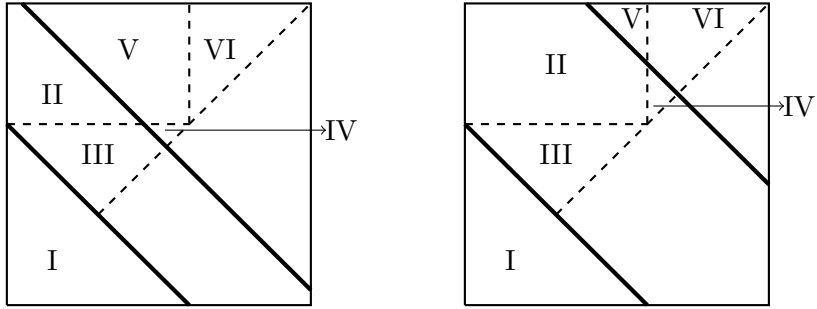
\begin{figure}[h!]

\begin{tikzpicture}[scale=4]

\draw [thick] (2,0) -- (3,0)--(3,1)--(2,1)--(2,0);
\draw [dashed, thick] (2.3,0.3) -- (3,1);
\draw [ultra thick] (2.05,1) -- (3,0.05);
\draw [ultra thick] (2,0.6) -- (2.6,0);

\draw[dashed, thick] (2.6,0.6) -- (2.6,1);
\draw[dashed, thick] (2,0.6) -- (2.6,0.6);


\node at (2.15,0.15) {I};
\node at (2.3,0.5) {III};
\node at (2.15,0.7) {II};
\node at (3.1,0.58) {IV};
\node at (2.4,0.85) {V};
\node at (2.7,0.85) {VI};

\draw[<-] (3.05,0.58) -- (2.52,0.58);
\end{tikzpicture} \hspace{1cm}
\begin{tikzpicture}[scale=4]

\draw [thick] (2,0) -- (3,0)--(3,1)--(2,1)--(2,0);
\draw [dashed, thick] (2.3,0.3) -- (3,1);
\draw [ultra thick] (2.4,1) -- (3,0.4);
\draw [ultra thick] (2,0.6) -- (2.6,0);

\draw[dashed, thick] (2.6,0.6) -- (2.6,1);
\draw[dashed, thick] (2,0.6) -- (2.6,0.6);


\node at (2.15,0.15) {I};
\node at (2.3,0.5) {III};
\node at (2.3,0.8) {II};
\node at (3.1,0.66) {IV};
\node at (2.55,0.95) {V};
\node at (2.8,0.95) {VI};

\draw[<-] (3.05,0.66) -- (2.62,0.66);
\end{tikzpicture}

\caption{{The subdivisions for the two middle cases in Fig. \ref{fig:cases1}. }} \label{fig:cases2}
\end{figure}
\end{center}

The two lemmas provide two interesting insights. 
Firstly, we can now easily analyse the equivolume partition within this family.

\begin{corollary}
Let $ v_1 = \sqrt{\frac{1}{3}},$ and $v_2 = \sqrt{2} - \sqrt{\frac{1}{3}} $, then 
the mean $\cL_2$-discrepancy of $\bOmega_{\ast}^{(3)}= \bOmega_{v_1,v_{2}}^{(3)}$ is 
$$ \E \cL_2^2(\cP_{\Omega_{\ast}^{(3)}} ) = 0.0290077. $$
\end{corollary}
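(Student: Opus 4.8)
The plan is to reduce the claim to a single substitution into the rational function already established in Lemma \ref{lem:n3a}. First I would rewrite the equivolume parameters in the coordinates $A = v_1\sqrt 2$ and $B = v_2\sqrt 2 - 1$ used there. Since the defining choice is $v_1 = \sqrt{1/3}$ and $v_2 = \sqrt 2 - \sqrt{1/3} = \sqrt 2 - v_1$, one gets $A = \sqrt{2/3}$ and
\[
B = (\sqrt 2 - v_1)\sqrt 2 - 1 = 2 - v_1\sqrt 2 - 1 = 1 - A.
\]
The relation $B = 1 - A$ is precisely the reflective symmetry of $\bOmega_{\ast}^{(3)}$ about the main diagonal (equivalently $v_2 = \sqrt 2 - v_1$), and it immediately gives the equivolume property: from the volume formulas in Lemma \ref{lem:n3a}, $|\Omega_1| = A^2/2 = 1/3$, $|\Omega_3| = (1-B)^2/2 = A^2/2 = 1/3$, and hence $|\Omega_2| = 1/3$.

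The next, and only genuinely necessary, check is to confirm that this partition falls under Lemma \ref{lem:n3a} rather than Lemma \ref{lem:n3}. Numerically $v_1 \approx 0.577$ and $v_2 \approx 0.837$, so $\frac{1}{\sqrt 2} \le v_2 \le 2 v_1$ holds; equivalently $0 \le B \le 2A - 1$, since $B \approx 0.183$ and $2A - 1 \approx 0.633$. Thus the applicable formula is \eqref{eq:lem5}, and using the formula of Lemma \ref{lem:n3} (valid only for $v_2 \ge 2 v_1$) would yield a wrong value.

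It then remains to insert $A = \sqrt{2/3}$ and $B = 1 - A$ into \eqref{eq:lem5}. Substituting $B = 1 - A$ turns numerator and denominator into polynomials in $A$ alone, and the identity $A^2 = 2/3$ reduces each to the form $c_0 + c_1 A$ with rational $c_0, c_1$. The denominator collapses cleanly: $(B-1)^2 = A^2 = 2/3$ and $A^2 + (B-2)B - 1 = 2/3 - 1/3 - 1 = -2/3$, so $1620\,A^2 (B-1)^2 \left(A^2 + (B-2)B - 1\right) = -480$. Dividing the similarly reduced numerator by this constant gives the stated value $0.0290077\ldots$.

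Since Lemma \ref{lem:n3a} supplies the closed form, there is no conceptual obstacle: the work is the mechanical evaluation of a high-degree rational function at an algebraic point, best delegated to a computer algebra system exactly as in the proofs of the two preceding lemmas. The single step that must not be skipped is the case selection, which is why I would verify $v_2 \le 2 v_1$ at the outset.
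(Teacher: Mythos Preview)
Your proposal is correct and follows essentially the same approach as the paper: compute $A=\sqrt{2/3}$ and $B=1-A$, verify that these parameters satisfy the hypotheses of Lemma~\ref{lem:n3a} (in particular $0\le B\le 2A-1$, equivalently $v_2\le 2v_1$), and then evaluate the closed-form expression~\eqref{eq:lem5}. Your additional observations---the symmetry $B=1-A$ and the clean collapse of the denominator to $-480$---are helpful elaborations, but the argument is the same as the paper's.
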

\begin{proof}
We have that $ A= \frac{\sqrt{2}}{\sqrt{3}} =0.816\ldots$ and $B= 1 - \frac{\sqrt{2}}{\sqrt{3}} = 0.183\ldots $.
Hence, we see that this case satisfies the assumptions of Lemma \ref{lem:n3a}. Using \eqref{eq:lem5}  we obtain the value.
\end{proof}

Secondly, combining the results of the two lemmas, we can now fix a parameter $A$ in $[1/2,1]$ and analyse all partitions for this fixed $A$ and any parameter $B$ in $[0,A]$. 
As it turns out, if we fix $A$ and plot the expected discrepancy as a function of the parameter $B$, then this function is very well behaved and has one unique minimum; see Fig. \ref{N3family}. 

\begin{remark} \label{rem1}
It is interesting to note that the minimal parameters in Lemma \ref{lem:n3} are not at the boundary of the two cases; i.e. for $A= 0.72550$ we have that $2A-1 = 0.451003 < 0.590843 = B < 0.72550 = A$. The expected discrepancy for the partition generated by $(A,2A-1)$ is 
$$0.0269763 $$
and is thus only slightly larger.
Furthermore, it turns out that the minimal parameters in Lemma \ref{lem:n3a} are exactly at the boundary; i.e. for $A= 0.753647$, we have that $2A-1=B= 0.507294$, whereas the minimum for this $A$ is obtained for $B^*=0.516474$ and the expected discrepancy is
$$ 0.0268046 $$
and is thus only slightly smaller.
\end{remark} 

Interestingly, the minimum for a given $A$ can be obtained in either of the two cases analysed in Lemma \ref{lem:n3} and \ref{lem:n3a} as can be seen from the examples. As a rule of thumb, the global minimum within this family is obtained for parameters $A$ and $B$ in which the minimum for fixed $A$ lies almost at the interval boundary, i.e. for which $B_{\min} \approx 2A-1$.

This observation relates to Question \ref{qu1} of Section \ref{subsect:2.5}. It illustrates that the equivolume property appears to have no particular significance within this simple family of partitions. It rather seems that other geometric reasons drive the minimisation.

\begin{figure}
\includegraphics[scale=0.4]{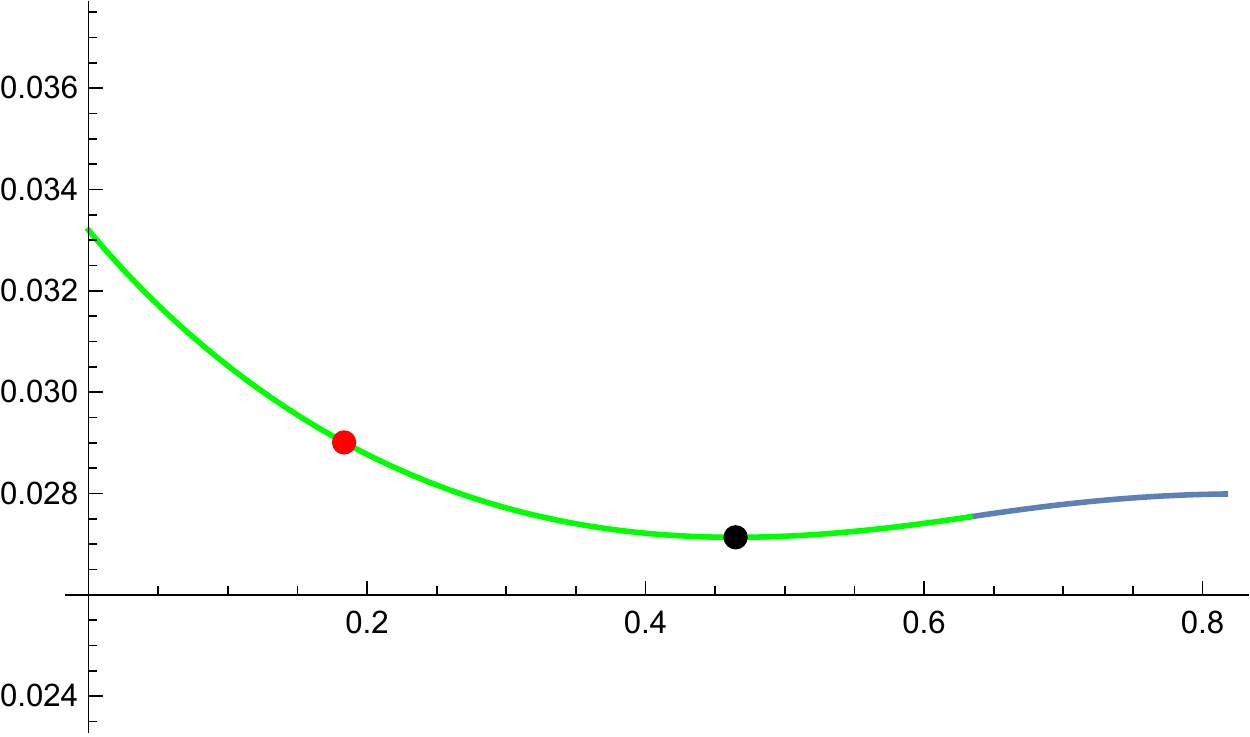}
\includegraphics[scale=0.4]{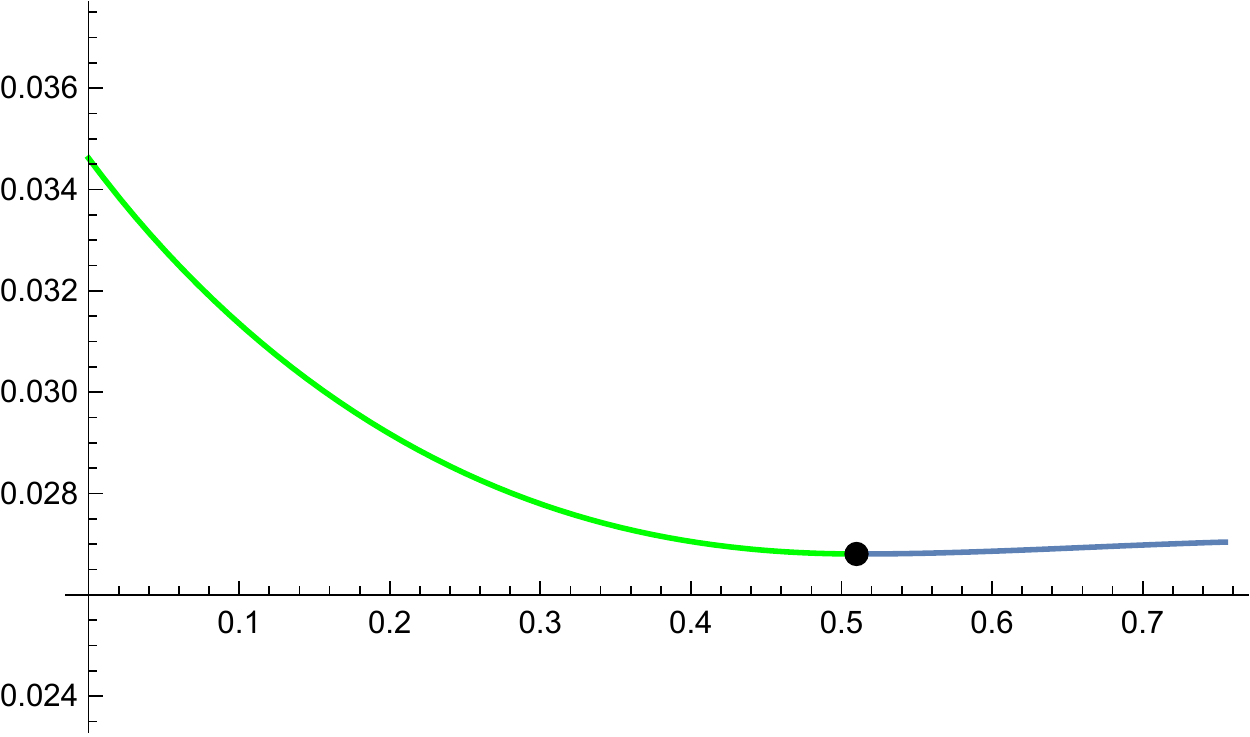}
\includegraphics[scale=0.4]{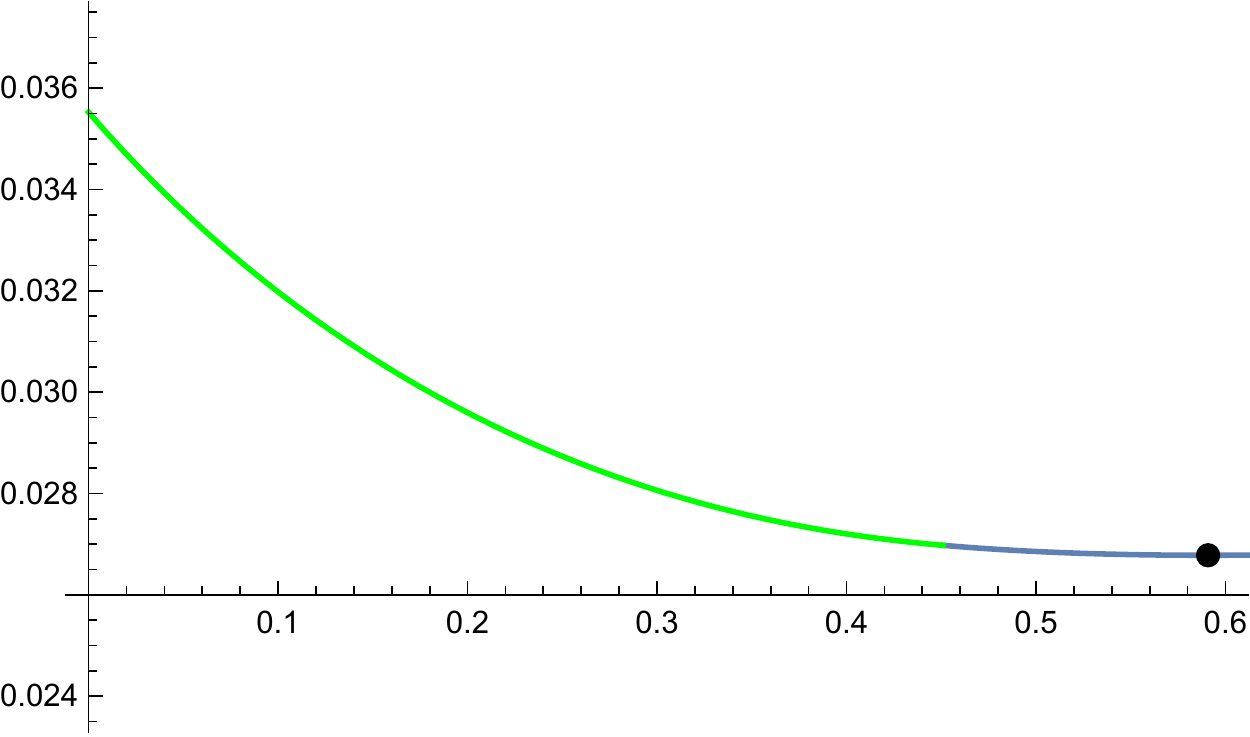}
\caption{The two colours illustrate the different parameter ranges for $B$ when $A$ is fixed.  The black dots represent the minima. Left: The graph for $A=\sqrt{2}/\sqrt{3}$. The red dot denotes the values for the equivolume partition. Middle: The graph for $A=0.755$. Right: The graph for $A=0.72550$. }
\label{N3family}
\end{figure}


\begin{figure}
\includegraphics[scale=0.3]{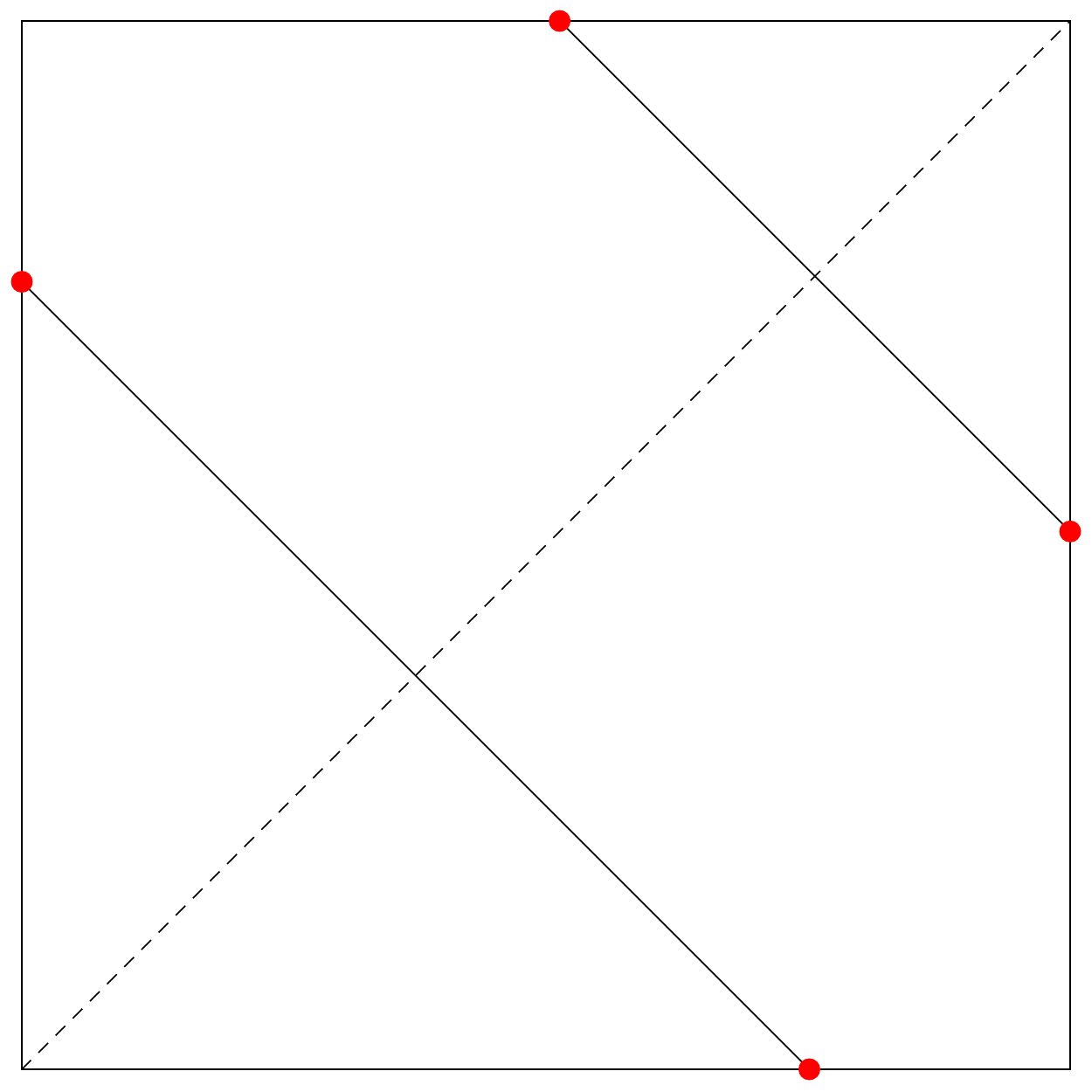} \hspace{3cm}
\includegraphics[scale=0.3]{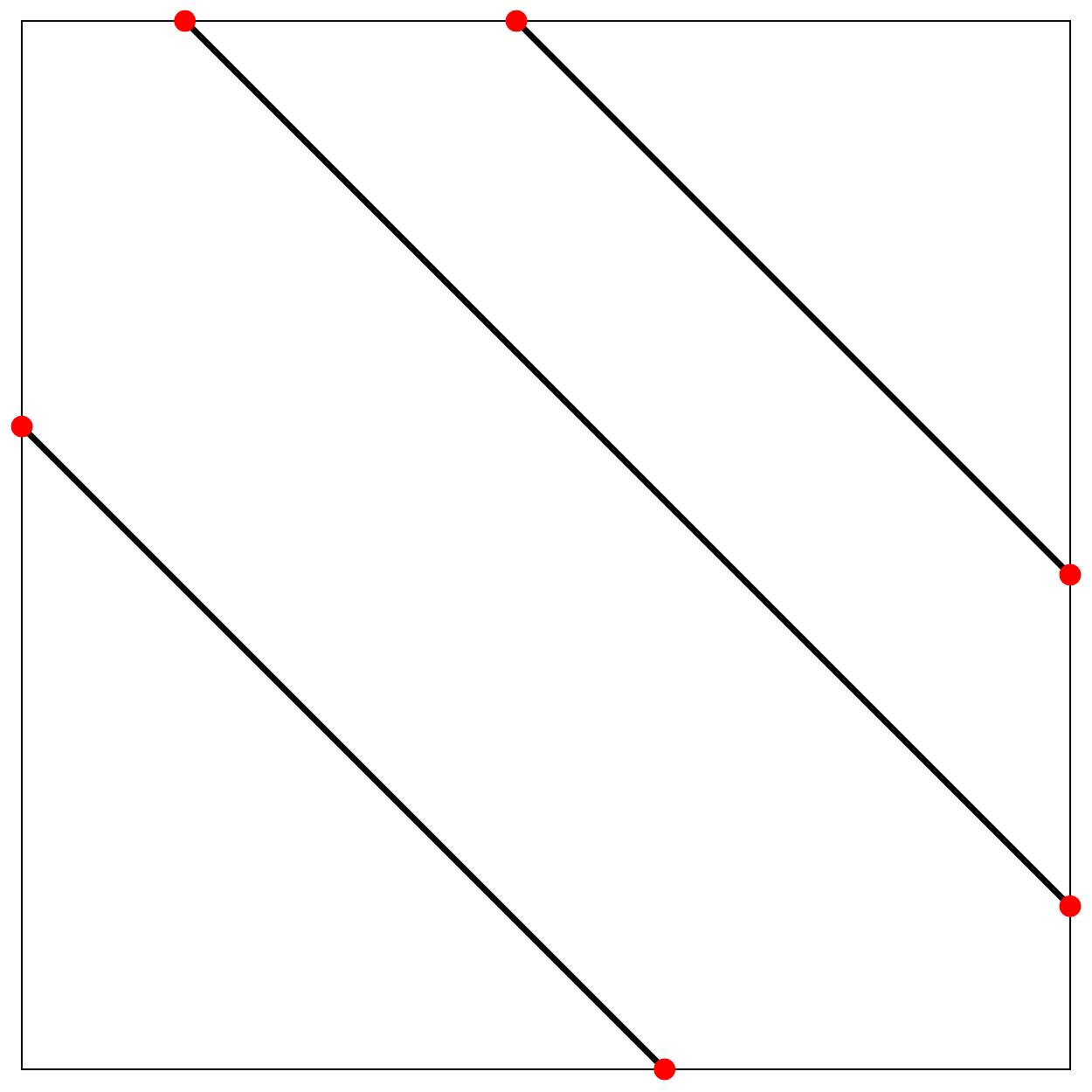}
\caption{Left: Minimal partition within the family $\bOmega_{\bv v}^{(3)}$ into three sets. Right: Partition into 4 sets that improves classical jittered sampling.}
\label{N3min}
\end{figure}


\subsection{An algorithmic approach}
In order to run systematic experiments within this family of partitions, we implemented an algorithm that takes as input an arbitrary vector $\mathbf{v}=\{v_1, \ldots, v_{N-1}\}$ with increasing entries in $[0,\sqrt 2]$ as well as a point $(x,y) \in [0,1]^2$ and outputs the expected value of the discrepancy function of the set of $N$ points generated from the partition $\bOmega_{\mathbf{v}}^{(N)}$ on the interval $[0,x] \times [0,y]$. This allows for an approximation of the expected value of the $\cL_2$-discrepancy of $\cP_{\bOmega_{\mathbf{v}}^{(N)}}$ using standard results from the theory of quasi-Monte Carlo integration. 

The algorithm is based on a simple geometric consideration. Assume $0 \leq y\leq x \leq 1$. As we have seen, 
we need to determine the probability 
$$q_i=q_i(x,y)= P(\bX_i \in \Omega_i \cap[0,x] \times [0,y]) = \frac{| \Omega_i \cap [0,x] \times [0,y] | }{| \Omega_i |}$$
with which the point $\bX_i \in \Omega_i$ lies in the box $[0,x] \times [0,y]$ for $1\leq i \leq N$. The expectation is then obtained from Proposition \ref{prop:3}. 
Hence, we first need to calculate the respective areas of the sets $\Omega_i$. To calculate their intersections with $[0,x] \times [0,y]$ we divide the set $v_1,\ldots,v_{N-1}$ into four subsets depending on which of the vertices of $[0,x] \times [0,y]$ are on the left or on the right of $\ell_1,\ldots,\ell_{N-1}$, respectively. More precisely:  the four lines with slope $-1$ through the vertices of $[0,x] \times [0,y]$ have distances $0=u_0\le u_1\le u_2\le u_3\le \sqrt2$ from the origin. The $j$th subset consists then of all $v_i$'s between $u_{j-1}$ and $u_j$ for $j=1,\ldots,4$, where  we have put $u_4=\sqrt 2$; see Fig. \ref{fig:algorithm} (left).  Different formulae are used to compute the intersection in each case.

This elementary algorithm leaves us with two conclusions. On the one hand, it is rather straightforward to calculate the expected value of the discrepancy function for a given box $[0,x] \times [0,y]$. On the other hand, it is incredibly tedious to do so. While this calculation can be solved algorithmically in a straightforward fashion, there is little hope to compute the expectation analytically since we have a different set of success probabilities for each box generated by a vector $(x,y)$.

\begin{center}
\begin{figure}[h!]
\begin{tikzpicture}[scale=0.8]
\draw[thick,fill=gray!25] (0,0) -- (0,2.3) -- (4.5,2.3) -- (4.5,0) -- (0,0);
\draw[thick] (0,0) -- (5,0)-- (5,5) -- (0,5)-- (0,0);
\draw[dotted] (0,0) -- (5,5);
\draw[dashed] (0,2.3) -- (2.3,0);
\draw[dashed] (4.5,0) -- (2.3,2.3);
\draw[dashed] (3.4,3.4) -- (4.5,2.3);

\draw[thick] (0,1) -- (1,0);
\draw[thick] (0,2) -- (2,0);
\draw[thick] (0,3) -- (3,0);
\draw[thick] (0,4) -- (4,0);
\draw[thick] (0,5) -- (5,0);
\draw[thick] (1,5) -- (5,1);
\draw[thick] (2,5) -- (5,2);
\draw[thick] (3,5) -- (5,3);
\draw[thick] (4,5) -- (5,4);

\node at (3.4,3.4) {$\bullet$}; 
\node at (2.25,2.25) {$\bullet$}; 
\node at (1.15,1.15) {$\bullet$}; 
{\draw[thin,<->] (-0.15,0.15) -- (1,1.30);
\node at (.45,1.05) {\footnotesize $u_1$}; }
{\draw[thin,<->] (0.15,-0.15) -- (3.55,3.25);
	\node at (2.15,1.4) {\footnotesize $u_3$}; }

\node at (-0.5,2.3) {\footnotesize $y$}; 
\node at (4.5,-0.5) {\footnotesize $x$}; 

\end{tikzpicture}
\quad
\begin{tikzpicture}[scale=1.33]
\draw[thick,fill=gray!25] (0,0) -- (1,0)-- (0,1) -- (0,0);
\draw[thick] (0,0) -- (3,0)-- (3,3) -- (0,3)-- (0,0);
\draw[dashed] (0,0) -- (3,3);

\draw[thick] (0,1) -- (1,0);
\draw[thick] (0,2) -- (2,0);
\draw[thick] (0,3) -- (3,0);
\draw[thick] (1,3) -- (3,1);
\draw[thick] (2,3) -- (3,2);

\node at (1,0) {$\bullet$}; 
\node at (2,0) {$\bullet$}; 
\node at (3,0) {$\bullet$}; 
\node at (3,1) {$\bullet$}; 
\node at (3,2) {$\bullet$}; 
\node at (0,0) {$\bullet$}; 
\node at (3,3) {$\bullet$}; 

\node at (0.5,0.5) {$\bullet$}; 
\node at (1,1) {$\bullet$}; 
\node at (1.5,1.5) {$\bullet$}; 
\node at (2,2) {$\bullet$}; 
\node at (2.5,2.5) {$\bullet$};

\node at (1,-0.3) {\footnotesize $\ell_1$}; 
\node at (2,-0.3) {\footnotesize$\ell_2$}; 
\node at (3.3,-0.3) {\footnotesize$\ell_3$}; 
\node at (3.4,1) {\footnotesize$\ell_4$}; 
\node at (3.4,2) {\footnotesize$\ell_5$};

\node at (0.8,0.5) {\footnotesize $v_1$}; 
\node at (1.3,1) {\footnotesize$v_2$}; 
\node at (1.8,1.5) {\footnotesize$v_3$}; 
\node at (2.3,2) {\footnotesize$v_4$}; 
\node at (2.8,2.5) {\footnotesize$v_5$}; 

\node at (0.2,0.5) {\footnotesize$\Omega_1$}; 

\end{tikzpicture}
\quad
\begin{tikzpicture}[scale=1.33]
\draw[thick,fill=gray!25] (0,0) -- (1.73,0)-- (0,1.73) -- (0,0);
\draw[thick] (0,0) -- (3,0)-- (3,3) -- (0,3)-- (0,0);
\draw[dashed] (0,0) -- (3,3);

\draw[thick] (0,1.73) -- (1.73,0);
\draw[thick] (0,2.45) -- (2.45,0);
\draw[thick] (0,3) -- (3,0);
\draw[thick] (3-1.73,3) -- (3,3-1.73);
\draw[thick] (3-2.45,3) -- (3,3-2.45);

\node at (1.73,0) {$\bullet$}; 
\node at (2.45,0) {$\bullet$}; 
\node at (3,0) {$\bullet$}; 
\node at (3,3-1.73) {$\bullet$}; 
\node at (3,3-2.45) {$\bullet$}; 
\node at (0,0) {$\bullet$}; 
\node at (3,3) {$\bullet$}; 

\node at (0.86,0.86) {$\bullet$}; 
\node at (1.22,1.22) {$\bullet$}; 
\node at (1.5,1.5) {$\bullet$}; 
\node at (1.77,1.77) {$\bullet$}; 
\node at (2.13,2.13) {$\bullet$};

\node at (1.73,-0.3) {\footnotesize $\ell_1$}; 
\node at (2.45,-0.3) {\footnotesize$\ell_2$}; 
\node at (3.3,-0.3) {\footnotesize$\ell_3$}; 
\node at (3.4,3-1.73) {\footnotesize$\ell_4$}; 
\node at (3.4,3-2.45) {\footnotesize$\ell_5$};

\node at (1.2,0.86) {\footnotesize $v_1$}; 
\node at (1.52,1.22) {\footnotesize$v_2$}; 
\node at (1.8,1.5) {\footnotesize$v_3$}; 
\node at (2.07,1.77) {\footnotesize$v_4$}; 
\node at (2.43,2.13) {\footnotesize$v_5$}; 

\node at (0.4,0.8) {\footnotesize$\Omega_1$}; 

\end{tikzpicture}

\caption{Left: Illustration of the algorithm. The three bullet points indicate the projections of the vertices of 
		$[0,x] \times [0,y]$ on the main diagonal giving rise to the numbers $u_1,u_2,u_3$ and the division of $\{v_1, \ldots, v_{N-1}\}$ into four subsets. Middle: Illustration of the equidistant partition $\bOmega^{(6)}_{\bv v}$ with $\bv v= \frac{\sqrt{2}}{6} (1,\ldots,5)$. Right: Illustration of the equivolume partition $\bOmega^{(6)}_*$.} \label{fig:algorithm}
\end{figure}
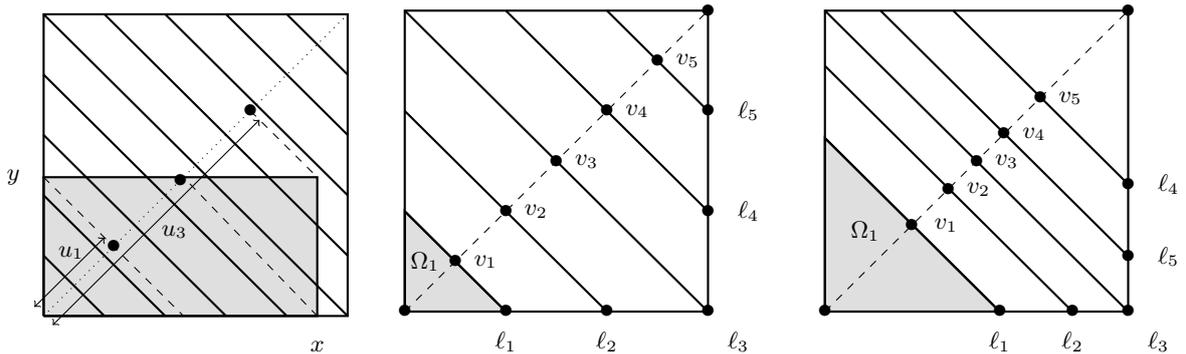
\end{center}

\subsection{Numerical results} \label{sec4:num}
In this final section, we present the results of three different sets of experiments. In the first two experiments we generate many instances of stratified point sets for a given fixed partition, calculate the \emph{$\cL_2$-discrepancy} of each point set and approximate the expected discrepancy of the partition by the mean of the experiment.
In the final experiment, we calculate and compare the \emph{star discrepancy} of different point sets.

We use Warnock's formula \cite{warnock} as presented in \cite[Proposition 2.15]{DP} to calculate the $\cL_2$-discrepancy of a given point set. That is, for any point set $\cP=\{\bv x_0, \ldots, \bv {x}_{N-1} \} \in [0,1]^d$ we have
\begin{equation} \label{warnock}
\cL_2(\cP)^2 = \frac{1}{3^d} - \frac{2}{N} \sum_{n=0}^{N-1} \prod_{i=0}^d \frac{1-x_{n,i}^2}{2} + \frac{1}{N^2} \sum_{m,n=0}^{N-1} \prod_{i=0}^d \min(1-x_{m,i}, 1-x_{n,i}),
\end{equation}
in which $x_{n,i}$ is the $i$-th component of the $\bv {x}_n$. 
We refer to \cite{heinrich1, heinrich2} for quick implementations of this formula.

First, we present a numerical observation which could, in principle, be proven along the same lines as Lemma \ref{lem:n3}. However, given the number of case distinctions such an analysis -- based on our elementary method -- would require, we only provide numerical evidence and state the result as a conjecture.

\begin{conjecture}
There exists $\mathbf{v}=(v_1,v_2,v_3)$ with $v_1< v_2< v_3$ in $[0,\sqrt{2}]$  such that 
$$\E \cL_2^2(\cP_{\bOmega_{\mathbf{v}}^{(4)} }) <  \E {\cL_2^2}(\cP_{\mathrm{jit4}}) = 0.01909\ldots$$
\end{conjecture}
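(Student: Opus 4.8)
The statement is an existence claim, so rather than minimising over the full three-parameter family it suffices to exhibit a single admissible vector $\mathbf{v}^*=(v_1^*,v_2^*,v_3^*)$ and to certify the strict inequality for it. The plan is therefore to compute both sides exactly. The right-hand side is unproblematic: arguing as in Example~1 via \eqref{eq:pthcnteredmean} with $p=2$, the four congruent cells of the $2\times2$ jittered partition give success probabilities $q_i(\bx)=4|\Omega_i\cap[0,\bx]|$ whose squares factor as $(a^2+a'^2)(b^2+b'^2)$ with $a=\min(x_1,\tfrac12)$, $a'=\max(0,x_1-\tfrac12)$ and analogously in $x_2$; since $\int_0^1(a^2+a'^2)\,\dd x_1=\tfrac{5}{24}$ and $\sum_i q_i=4x_1x_2$, integration yields $\E\cL_2(\cP_{\mathrm{jit4}})^2=\tfrac1{16}\big(1-\tfrac{25}{36}\big)=\tfrac{11}{576}=0.01909\ldots$, confirming the quoted value.

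For the left-hand side I would follow the recipe of Lemma \ref{lem:n3} verbatim, but only for the one geometric configuration singled out by $\mathbf{v}^*$. First I fix the combinatorial type of the partition, i.e.\ how many of the three cutting lines $\ell_1,\ell_2,\ell_3$ lie below the anti-diagonal ($x_1+x_2<1$) and how many above; this replaces the full case analysis of Figure \ref{fig:cases1} by a single case. Using the elementary algorithm of Subsections \ref{subsec:omega} and \ref{sec4:num} I then write each $q_i(\bx)=|\Omega_i\cap[0,\bx]|/|\Omega_i|$ as a piecewise polynomial, subdividing $[0,1]^2$ into the finitely many regions on which all four $q_i$ have a common closed form. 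Exploiting the reflection symmetry across the main diagonal (the lines being orthogonal to it), I restrict the integration to $\{x_2\le x_1\}$ and double. Proposition \ref{prop:3} then reduces the expected discrepancy to $\tfrac1{16}\sum_{i=1}^4\int_{[0,1]^2}q_i(1-q_i)\,\dd\bx$, a finite sum of integrals of polynomials over polygonal regions, each evaluable in closed form.

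To turn this into a rigorous inequality I would keep the three parameters symbolic within the chosen configuration, obtaining $\E\cL_2(\cP_{\bOmega_{\mathbf{v}}^{(4)}})^2$ as an explicit rational function $F(v_1,v_2,v_3)$ of the same shape as \eqref{eq:lem5}; a computer-algebra minimisation then locates an interior critical point $\mathbf{v}^*$ and certifies $F(\mathbf{v}^*)<\tfrac{11}{576}$. Alternatively, and more cleanly for a self-contained proof, one can fix $\mathbf{v}^*$ at algebraically convenient values (for instance with rational diagonal intercepts) so that every subregion integral, and hence the final number, is an exact rational manifestly below $\tfrac{11}{576}$; the claim is then settled by a single comparison of fractions.

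The main obstacle is combinatorial bookkeeping rather than anything conceptual. With three lines the number of subregions on which the $q_i$ are simultaneously polynomial is considerably larger than in the $N=3$ case, and keeping the region boundaries consistent with their closed forms is error-prone---this is precisely why the authors leave the full minimisation as a conjecture. Restricting to a single configuration and to the mere existence claim collapses the top-level case distinction, but one must still (i) verify that the chosen $\mathbf{v}^*$ genuinely realises the assumed combinatorial type, and (ii) ensure the arithmetic is exact, so that a \emph{strict} inequality, and not merely a numerical one, is obtained. Both are routine once a good candidate $\mathbf{v}^*$ has been identified numerically.
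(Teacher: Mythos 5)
First, a point of orientation: the paper does not prove this statement at all --- it is stated as a conjecture and is supported only by Monte Carlo simulation ($10^6$ samples) and algorithmic quadrature for the candidate $v_1^{\ast}=\sqrt{2}/4+0.08$, $v_2^{\ast}=\sqrt{2}/2+0.11$, $v_3^{\ast}=3\sqrt{2}/4-0.02$; the authors explicitly say a rigorous proof ``could, in principle'' follow the method of Lemma \ref{lem:n3} but leave it undone because of the case analysis it requires. Your right-hand side computation is correct and complete ($\E\cL_2(\cP_{\mathrm{jit4}})^2=\tfrac{11}{576}$; jittered sampling is equivolume, so Proposition \ref{prop:3} applies), and your structural observation --- that an existence claim needs only one combinatorial configuration rather than the full case analysis of Fig.~\ref{fig:cases1} --- is a genuine and sensible reduction.

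However, your plan contains a substantive error that would invalidate the certification step. You reduce the left-hand side to $\tfrac{1}{16}\sum_{i=1}^4\int q_i(1-q_i)\,\dd\bx$ via Proposition \ref{prop:3}, but that proposition holds only for \emph{equivolume} partitions, where $Z_\bx$ is unbiased (Proposition \ref{prop:1}). A general $\bOmega_{\mathbf{v}}^{(4)}$ is not equivolume, and the partitions the paper's numerics identify as beating jittered sampling are precisely non-equivolume ones (perturbations of the equidistant vector $(\sqrt2/4)(1,2,3)$). For such partitions the correct expression is
\[
\E\cL_2(\cP_{\bOmega})^2=\int_{[0,1]^2}\Bigl[\tfrac1{16}\sum_{i=1}^4 q_i(\bx)\bigl(1-q_i(\bx)\bigr)+\Bigl(\tfrac14\sum_{i=1}^4 q_i(\bx)-x_1x_2\Bigr)^{2}\Bigr]\dd\bx,
\]
and the bias term you drop is strictly positive whenever the partition is not equivolume (Proposition \ref{prop:1} together with Lemma \ref{lem1A}). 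Since the omitted term is nonnegative, your computation systematically \emph{under}estimates the discrepancy --- the error goes exactly in the direction that could make the certification succeed falsely. This is why the paper's own $N=3$ analysis in Lemma \ref{lem:n3} expands the full second moment, $\E\bigl(\#(x,y)/3-xy\bigr)^2=\tfrac19\E\bigl(\#(x,y)^2\bigr)-\tfrac23xy\,\E\bigl(\#(x,y)\bigr)+x^2y^2$, rather than using the variance alone. Beyond this, the proposal remains a plan rather than a proof: no explicit $\mathbf{v}^{\ast}$ is exhibited and no exact rational value is produced and compared with $\tfrac{11}{576}$, and that exact evaluation is precisely the content the authors judged too heavy to carry out --- which is why the statement is a conjecture in the paper rather than a lemma.
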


We obtained various instances of partitions that seem to improve the classical jittered sampling by perturbing the three values of the vector $(\sqrt2/4)(1,2,3)$. We obtained the best numerical results for the three points
$$v_1^{\ast}=\frac{\sqrt{2}}{4} + 0.08, \quad v_2^{\ast}=\frac{\sqrt{2}}{2} + 0.11, \quad v_3^{\ast}=\frac{3\sqrt{2}}{4} - 0.02;$$
see Fig.~\ref{N3min}.
We simulated $10^6$ instances of stratified sets for this particular partition and calculated the discrepancy in each case with the formula of Warnock. Independently, we used our algorithm to estimate the expected discrepancy using $10^4$ many grid points. 
Both methods indicate that the first digits after the decimal points of the expected discrepancy are
$$ 0.0188\ldots,$$
which would be clearly better than the mean discrepancy of jittered sampling. 

Next, we use Warnock's formula to empirically study the discrepancy of different point sets and constructions; i.e. for given $N$ we generate 500 samples and calculate the $\mathcal{L}_2$-discrepancy for each of these samples using Warnock's formula. The empirical mean of this sample approximates the expected value of the discrepancy.
We collect our numerical results in Table \ref{table1}.
Our numerical results suggest that the expected discrepancy of partitions $\bOmega_{\ast}^{(N)}$ is about a factor 2 smaller than the expected discrepancy of a set of random points:
\begin{conjecture} We conjecture that
$$ \lim_{N \rightarrow \infty} \frac{ \E {\cL_2}(\cP_N)^2}{\E {\cL_2}(\cP_{\bOmega_{\ast}^{(N)}})^2} = 2.$$
\end{conjecture}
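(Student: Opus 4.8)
The plan is to reduce the conjecture to an explicit (if tedious) integral by passing to the anti-diagonal coordinate and a quantile change of variables that turns the equivolume strips into a uniform grid. First I would use Proposition \ref{prop:3} together with the equivolume property to write, for the partition $\bOmega_\ast^{(N)}$,
\[
N\,\E\cL_2(\cP_{\bOmega_\ast^{(N)}})^2=\int_{[0,1]^2} N\,\Var\big(Z_\bx(\cP_{\bOmega_\ast^{(N)}})\big)\,\dd\bx,
\qquad
N\,\Var(Z_\bx)=\frac1N\sum_{i=1}^N q_i(\bx)\big(1-q_i(\bx)\big)\le\frac14 .
\]
Thus the integrand is uniformly bounded, and it suffices to identify the pointwise limit of $N\,\Var(Z_\bx)$ and apply dominated convergence.

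For fixed $\bx=(x_1,x_2)$ set $A_\bx(c)=\big|[0,\bx]\cap\{(s,t):s+t\le c\}\big|$; its derivative $A_\bx'(c)$ is the trapezoidal profile equal to $c$ on $[0,\min(x_1,x_2)]$, to $\min(x_1,x_2)$ on $[\min(x_1,x_2),\max(x_1,x_2)]$, and to $x_1+x_2-c$ on $[\max(x_1,x_2),x_1+x_2]$ (and $0$ beyond). Because the lines defining $\bOmega_\ast^{(N)}$ are the level sets $\{s+t=c\}$ and the partition is equivolume, the $i$-th breakpoint $c_i$ satisfies $T(c_i)=i/N$, where $T(c)=\big|[0,1]^2\cap\{s+t\le c\}\big|$ has derivative $g(c):=T'(c)=\min(c,2-c)$. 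Since $q_i(\bx)=N\big(A_\bx(c_i)-A_\bx(c_{i-1})\big)$, the substitution $u=T(c)$ turns the breakpoints into the uniform grid $u_i=i/N$ and gives $q_i(\bx)=N\int_{(i-1)/N}^{i/N}\beta_\bx(u)\,\dd u$, where $\beta_\bx(u)=A_\bx'(c)/g(c)$ at $c=T^{-1}(u)$. The point of this substitution is that the apparent singularity of $1/g$ at the corners $c=0,2$ cancels against $A_\bx'$, so $\beta_\bx$ is a bounded, piecewise continuous function with values in $[0,1]$.

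Since $q_i(\bx)$ is the cell average of $\beta_\bx$ over the $i$-th interval of the uniform grid and $r\mapsto r(1-r)$ is bounded and continuous, a standard Riemann-sum argument should yield, for every $\bx$ with $x_1,x_2>0$,
\[
N\,\Var(Z_\bx)=\frac1N\sum_{i=1}^N q_i(\bx)\big(1-q_i(\bx)\big)\xrightarrow{N\to\infty}\int_0^1\beta_\bx(u)\big(1-\beta_\bx(u)\big)\,\dd u=\int_0^2 \frac{A_\bx'(c)}{g(c)}\Big(1-\frac{A_\bx'(c)}{g(c)}\Big)g(c)\,\dd c .
\]
I expect the verification of this limit to be the main obstacle: the strip widths $c_i-c_{i-1}$ are strongly non-uniform and degenerate near the two corners, so the naive midpoint estimate $q_i\approx A_\bx'(c_i)/g(c_i)$ fails there. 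The quantile substitution $u=T(c)$ is precisely what repairs this, reducing the claim to convergence of cell averages of a bounded function on a uniform grid, the within-cell oscillation of $\beta_\bx$ vanishing except on the finitely many cells meeting the kinks of $A_\bx'$.

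Combining the uniform bound $N\,\Var(Z_\bx)\le\tfrac14$ with dominated convergence then gives
\[
N\,\E\cL_2(\cP_{\bOmega_\ast^{(N)}})^2\to J:=\int_{[0,1]^2}\int_0^2\Big(A_\bx'(c)-\frac{A_\bx'(c)^2}{g(c)}\Big)\dd c\,\dd\bx=\frac14-\int_{[0,1]^2}\int_0^2\frac{A_\bx'(c)^2}{g(c)}\,\dd c\,\dd\bx ,
\]
using $\int_0^2 A_\bx'(c)\,\dd c=x_1x_2$ and $\int_{[0,1]^2}x_1x_2\,\dd\bx=\tfrac14$. The remaining task is to evaluate $I:=\int_{[0,1]^2}\int_0^2 A_\bx'(c)^2/g(c)\,\dd c\,\dd\bx$. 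As the integrand is symmetric in $x_1,x_2$ and $A_\bx'$ is piecewise linear, this splits, after fixing $x_1\le x_2$, into the three $c$-ranges above together with finitely many sub-cases determined by the position of $\min(x_1,x_2)$, $\max(x_1,x_2)$ and $x_1+x_2$ relative to $1$ (where $g$ switches from $c$ to $2-c$); each resulting piece is an elementary polynomial integral. Carrying out this bookkeeping I would obtain $I=\tfrac{13}{72}$, hence $J=\tfrac14-\tfrac{13}{72}=\tfrac5{72}$. Recalling from \eqref{eq:MCL2} that $N\,\E\cL_2(\cP_N)^2=\tfrac5{36}$ for all $N$, this yields $\E\cL_2(\cP_N)^2/\E\cL_2(\cP_{\bOmega_\ast^{(N)}})^2\to(5/36)/(5/72)=2$, as conjectured; the only non-routine step is the convergence argument above, the evaluation of $I$ being purely computational.
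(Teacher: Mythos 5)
The paper offers no proof of this statement: it is posed purely as a conjecture, supported by the simulations in Table \ref{table1}, so there is nothing to compare your argument against except those numerics. Your proposal is, as far as I can check, a correct proof and would upgrade the conjecture to a theorem once the two asserted steps are written out. The structural part is sound: Proposition \ref{prop:3} gives $N\,\Var(Z_\bx)=\frac1N\sum_i q_i(\bx)\bigl(1-q_i(\bx)\bigr)\le\frac14$, so dominated convergence applies; the paper's definition of $\bOmega_\ast^{(N)}$ gives exactly $T(c_i)=i/N$ in your notation (with $c_i=\sqrt2\,v_i$), so the quantile substitution is legitimate; $\beta_\bx$ takes values in $[0,1]$ because $A_\bx'\le g$ pointwise, and it is piecewise continuous with at most four breakpoints, so the cells straddling a breakpoint contribute $O(1/N)$ to the sum and the Riemann-sum limit holds for a.e.~$\bx$. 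I also verified your claimed value $I=13/72$ by an independent bookkeeping: writing $A_\bx'(c)^2$ as a double integral over pairs $(s_1,s_2)$ in the cross-section and integrating over $\bx$ first, Fubini reduces $I$ to one-dimensional integrals,
\[
I=\int_0^1\Bigl(c-\tfrac43c^2+\tfrac5{12}c^3\Bigr)\,\dd c+\int_1^2\frac{(2-c)^3}{12}\,\dd c=\frac{23}{144}+\frac1{48}=\frac{13}{72},
\]
whence $J=\tfrac14-\tfrac{13}{72}=\tfrac5{72}$ and, by \eqref{eq:MCL2}, the ratio tends to $(5/36)/(5/72)=2$ as conjectured. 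This limit is moreover consistent with the paper's data: $5/(72N)$ gives $0.000694$ and $0.0001736$ for $N=100$ and $N=400$, against the empirical values $0.000699558$ and $0.000172704$ in Table \ref{table1}. The only genuine gaps in your write-up are presentational: the Riemann-sum convergence is asserted (``should yield'') rather than proved, and the evaluation of $I$ is announced rather than carried out; both are routine and, as checked above, correct.
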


\begin{table}[h]
\begin{center}
\begin{tabular}{|c|| l || l || l || l |}
\hline
$ \E {\cL_2}( \cdot )$ &$\cP_N$& $\cP_{\mathrm{vert}}$& $\cP_{\bOmega_{\ast}^{(N)}}$ & $\cP_{\mathrm{jit}}$\\
\hline
$N$ &  & & \text{empirical} & \text{empirical}\\
\hline
50 &0.00277778&  0.00168889  &0.00137637 & \\
100 &0.00138889& 0.000838889 &0.000699558 & 0.000163637\\
150 &0.000925926& 0.000558025 &0.000471159 & \\
\hline
200 &0.000694444&0.000418056 &0.000356743 & \\
256 &0.000542535 & 0.000326369& 0.000269319& 0.0000403301\\
300 &0.000462963&0.000278395 &0.000228231 & \\
\hline
350&0.000396825& 0.000238549& 0.000201676& \\
400&0.000347222& 0.000208681& 0.000172704 & 0.0000206345\\
450&0.000308642& 0.00018546& 0.000159365& \\
\hline
\end{tabular}
\medskip 

\end{center}
\caption{Expected $\cL_2$-discrepancy of different point sets, in which $N$ stands for the number of points. The empirical values are calculated as the mean of the discrepancy of 500 samples. We calculated the discrepancy of  individual samples with Warnock's formula. 
}
\label{table1}
\end{table}

In our final experiment, we use an implementation of the Dobkin-Eppstein-Mitchell algorithm \cite{dob}  for the computation of the star discrepancy which was provided by Magnus Wahlstr\"{o}m; for details on the implementation we refer to \cite{doerr}.
This experiment relates to our comments in Section \ref{sect:extra} and shows that our partitions also seem to generate point sets that have a smaller expected star discrepancy than sets of $N$ i.i.d. uniformly random points. We leave the generalisation of the partitions $\bOmega_{\ast}^{(N)}$ for future research. It is in principle straightforward, but a bit technical and thus beyond the scope of this final proof-of-concept numerical experiment.

\begin{table}[h]
\begin{center}
\begin{tabular}{|c|  c |c|c|c|c|}
\hline
$ \E D^*( \cdot )$& &$\cP_N$& $\cP_{\mathrm{vert}}$& $\cP_{\bOmega_{\ast}^{(N)}}$ & $\cP_{\mathrm{jit}}$\\
\hline
\hline
d=2 & $N$& & & & \\
\hline
&$10^2=100$ & 0.1129 & 0.1016 & 0.0975 & 0.0616 \\
&$32^2=1024$ & 0.0379 & 0.0316 & 0.0293 & 0.0127 \\
\hline
d=3 & & & & & \\
\hline
&$5^3=125$ & 0.1430 & 0.1233 & -- & 0.0910 \\
&$10^3=1000$ & 0.0483 & 0.0397 & -- & 0.0274 \\
\hline
d=5 & & & & & \\
\hline
&$4^5=1024$ & 0.0610 & 0.0560 & -- & 0.0463 \\
\hline
\end{tabular}
\medskip 

\end{center}
\caption{Mean star discrepancy of 20 experiments with different point sets in dimensions $d=2,3,5$.}
\label{table:discrepancy}
\end{table}


\addresseshere



\newpage
\section*{Appendix}

\subsection*{Appendix A -- Uniform distribution and equidistribution of partitions}

The well-known fact that a sequence of Monte Carlo samples $(\cP_N)$ is almost surely uniformly distributed follows 
directly from the strong law of large numbers. A corresponding statement for the sequences in Definition \ref{def1} 
is based on the strong law of large numbers for triangular arrays. 
Note that it  does not require that the sampling points for different partitions $\bOmega^{(N)}$ are independent. This is why we do not introduce this assumption, although it is typically satisfied in the applications we have in mind.

\begin{proposition}\label{prop:2}
	Consider a sequence of partitions $\{\bOmega^{(N)}\}_{N\geq 1}$ of $[0,1]^d$, with $\bOmega^{(N)}=(\Omega_1^{(N)},\ldots,$ $\Omega_N^{(N)})$ consisting of Lebesgue-sets with positive content and let $\bX^{(N)}=(\bX_1^{(N)},\ldots, \bX_N^{(N)})$ be the vector of stratified sampling points based on $\bOmega^{(N)}$. 
	Then the triangular array $\widehat \bX=\big(\bX^{(N)}\big)_{N\in \NN}$ is almost surely uniformly distributed if and only if 
	\begin{align}\label{eqUnifDist}
		\lim_{N\to \infty}\sum_{i=1}^N \frac1{N|\Omega_i^{(N)}|}|\Omega_i^{(N)}\cap [{\bf x}, {\bf y}[|
		=|[{\bf x}, {\bf y}[| 
	\end{align}
	for all cubes $[{\bf x}, {\bf y}[ \subset [0,1]^d$.  
\end{proposition}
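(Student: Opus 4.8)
The plan is to translate both the definition of almost sure uniform distribution and the analytic condition \eqref{eqUnifDist} into statements about a single sequence of random variables, and then invoke a strong law for triangular arrays. Fix a cube $C=[\bx,\by[\,\subset[0,1]^d$ and set $S_N(C)=\tfrac1N\sum_{i=1}^N \1_C(\bX_i^{(N)})$, which is exactly the relative count appearing in Definition \ref{def1}. Since $\bX_i^{(N)}$ is uniform on $\Omega_i^{(N)}$, the indicator $\1_C(\bX_i^{(N)})$ is Bernoulli with parameter $p_i^{(N)}=|\Omega_i^{(N)}\cap C|/|\Omega_i^{(N)}|$, so $\E S_N(C)=\sum_{i=1}^N \tfrac{1}{N|\Omega_i^{(N)}|}|\Omega_i^{(N)}\cap C|$ is precisely the left-hand side of \eqref{eqUnifDist}. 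Thus \eqref{eqUnifDist} says nothing other than $\E S_N(C)\to|C|$ for every cube, and the proposition asserts that this mean convergence is equivalent to the almost sure convergence $S_N(C)\to|C|$ for all cubes. The necessity direction is then immediate: if $\widehat\bX$ is almost surely uniformly distributed, then for each fixed $C$ we have $S_N(C)\to|C|$ almost surely, and as $0\le S_N(C)\le 1$, bounded convergence yields $\E S_N(C)\to|C|$, which is \eqref{eqUnifDist}.

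The substance is the converse. Fix $C$ and center, writing $Z_N:=S_N(C)-\E S_N(C)=\tfrac1N\sum_{i=1}^N Y_{Ni}$ with $Y_{Ni}=\1_C(\bX_i^{(N)})-p_i^{(N)}$; within each row these are independent, mean-zero, and bounded by $1$. Using only within-row independence, the odd cross-terms vanish and
$$\E\Big[\Big(\sum_{i=1}^N Y_{Ni}\Big)^4\Big]=\sum_{i=1}^N \E Y_{Ni}^4+3\sum_{i\ne j}\E Y_{Ni}^2\,\E Y_{Nj}^2=O(N^2),$$
so $\E Z_N^4=O(N^{-2})$. Hence $\sum_N \E Z_N^4<\infty$, and Tonelli/monotone convergence gives $\sum_N Z_N^4<\infty$ almost surely, so $Z_N\to 0$ almost surely; this is the strong law for triangular arrays in the spirit of \cite{HuTaylor}. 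Combined with $\E S_N(C)\to|C|$ from \eqref{eqUnifDist}, we obtain $S_N(C)\to|C|$ almost surely for each fixed cube. The point I expect to be the crux — and the reason the statement holds \emph{without} assuming independence across partitions — is that no classical across-row strong law is available, since the rows $\bX^{(N)}$ may be arbitrarily dependent. The resolution is that the fourth-moment bound uses only within-row independence, while the passage from summable fourth moments to almost sure convergence needs no joint assumption across rows whatsoever.

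Finally I would pass from each fixed cube to all cubes on a single null set. Taking the countable family of cubes with rational vertices, the intersection of the corresponding probability-one events is again of probability one, and on it $S_N(C)\to|C|$ holds for every rational cube. An arbitrary cube $[\bx,\by[$ is then sandwiched between a slightly smaller and a slightly larger rational cube of almost equal volume; monotonicity of the count in the set, together with the rational-cube convergence, pins both $\liminf_N S_N([\bx,\by[)$ and $\limsup_N S_N([\bx,\by[)$ between two volumes differing by an arbitrarily small amount, forcing convergence to $|[\bx,\by[|$. This establishes almost sure uniform distribution in the sense of Definition \ref{def1} and completes the equivalence.
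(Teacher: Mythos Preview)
Your proof is correct and takes essentially the same approach as the paper: both reduce to a strong law of large numbers for triangular arrays of row-wise independent bounded variables, yielding $S_N(C)-\E S_N(C)\to 0$ almost surely, after which \eqref{eqUnifDist} is precisely the statement $\E S_N(C)\to|C|$. The paper cites \cite[Theorem~2.2]{HuTaylor} for the strong law where you give a self-contained fourth-moment argument, and it handles necessity by contrapositive (re-using the strong law along a subsequence) where you use bounded convergence; in fact your version is slightly more complete, since you make the passage from ``each fixed cube'' to ``all cubes simultaneously on one null set'' explicit via rational approximation, a step the paper leaves implicit.
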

\begin{proof}
	
	For ${\bf x}, {\bf y}\in [0,1]^d$ with each component of $\by$ at least as large as the corresponding component of $\bx$, consider the  stochastic variables 
	$Y_{i}^{(N)}=1_{[{\bf x}, {\bf y}[}(\bX^{(N)}_i)$, where $\bX^{(N)}_1,\ldots,\bX_N^{(N)}$ is the stratified sample based on the partition $\bOmega^{(N)}$. The $Y$'s  are row-wise independent random variables with uniformly bounded variances and we have
	\[
	a_N(\bx,\by):=\frac1N\sum_{i=1}^N\E Y_i^{(N)}=\sum_{i=1}^N \frac1{N|\Omega_i^{(N)}|}\left|\Omega_i^{(N)}\cap [{\bf x}, {\bf y}[\right|.
	\]
	Therefore, the strong law of large numbers for triangular arrays (see 
	\cite[Theorem 2.2]{HuTaylor} with $p=1$, $\psi(t)=t^2$ and $a_n=n$) implies 
	\begin{equation}\label{eqSLLN}
		\frac{\# \big(\cP_{\bOmega^{(N)}}\cap [{\bf x}, {\bf y}[\big)}N-a_N(\bx,\by) \to 0
	\end{equation}
	almost surely as $N\to\infty$. 
	
	If assumption \eqref{eqUnifDist} holds, we have $\lim_{N\to\infty} a_N(\bx,\by)=|[{\bf x}, {\bf y}[|$, so \eqref{eqSLLN} implies  \eqref{eq:Def1} for almost every realization. Hence, the stratified sample points form almost surely a uniformly distributed triangular array. 
	
	If \eqref{eqUnifDist}  is violated, there must be ${\bf x}, {\bf y}\in [0,1]^d$ such that the limit in \eqref{eqUnifDist} does not exist or is different from $|[{\bf x}, {\bf y}[|$. In any case, there is a subsequence $\big(a_{N'}(\bx,\by)\big)$ of $\big(a_N(\bx,\by)\big)$ such that $a_{N'}(\bx,\by)\to a\ne |[{\bf x}, {\bf y}[|$ as $N'\to\infty$, and  \eqref{eqSLLN} shows that \eqref{eq:Def1} cannot hold for almost every realization. 
	
	Concluding,  the triangular array is uniformly distributed if and only if  \eqref{eqUnifDist} holds 
	for all rectangular sets $[{\bf x}, {\bf y}[\in [0,1]^d$. 
\end{proof}

In particular,  if $\{\bOmega^{(N)}\}_{N\geq 1}$ is a sequence of finite partitions of the unit cube such that all partitions are equivolume, then $|\Omega_i^{(N)}|=1/N$, so \eqref{eqUnifDist} is satisfied even without taking the limit. As a consequence, sequences of equivolume partitions are uniformly distributed; this is implication (d) in Fig.~\ref{Fig:Implic}. \medskip 
\begin{figure}[bh]
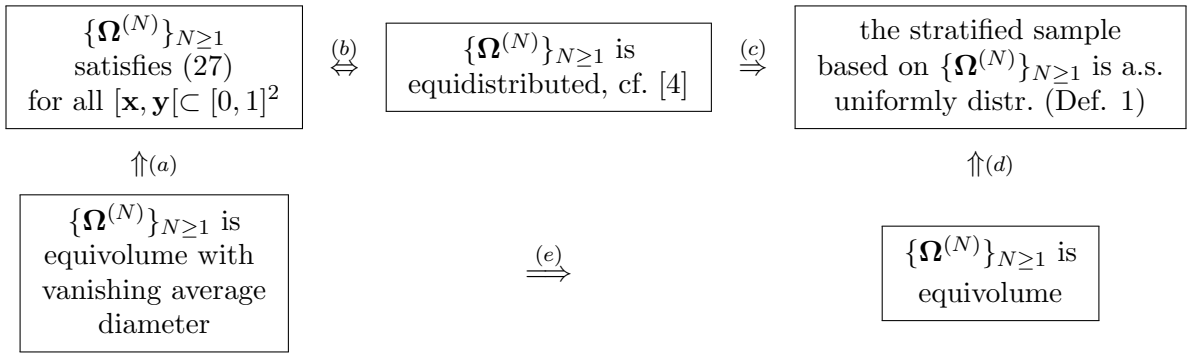
 
	\begin{tabular}{ccccc}
		
			\fbox{$
				\begin{array}{c}
					\{\bOmega^{(N)}\}_{N\geq 1}
					\\
					\text{satisfies \eqref{new}}
					\\
					\text{for all $[\bx,\by[\subset [0,1]^2$}
				\end{array}
				$}
		&
		$\stackrel{(b)}{\Leftrightarrow}$
		& 
		\fbox{$
			\begin{array}{c}
				\{\bOmega^{(N)}\}_{N\geq 1} \text{ is}
				\\
				\text{equidistributed, cf.~\cite{volcic2}}
			\end{array}
			$}
		&
		$\stackrel{(c)}{\Rightarrow}$
		&
		\fbox{$
			\begin{array}{c}
				\text{the stratified sample}\\
				\text{based on } \{ \bOmega^{(N)}\}_{N\geq 1} \text{ is a.s.}
				\\
				\text{uniformly distr.~(Def. \ref{def1})}
			\end{array}
			$}
		\\[5ex]
		$\Uparrow \scriptstyle (a)$
		&&
		&&$\Uparrow \scriptstyle (d)$
		\\[1ex]
			\fbox{$
				\begin{array}{c}
					\{\bOmega^{(N)}\}_{N\geq 1}\text{  is}
					\\
					\text{equivolume with }
					\\
					\text{vanishing average}
					\\
					\text{diameter}
				\end{array}
				$}
		&&
		$\stackrel{(e)}{\Longrightarrow}$
		&&
		\fbox{$
			\begin{array}{c}
				\{\bOmega^{(N)}\}_{N\geq 1} \text{ is}
				\\[.5ex]
				\text{equivolume}
			\end{array}
			$}
	\end{tabular}
	\caption{Different properties of sequences of partitions and their connections.}
	\label{Fig:Implic}	
\end{figure}

We conclude this section with a comparison of Definition \ref{def1} with the notion of equidistributed partitions from  \cite{volcic2}, which we now recall using our notation.  
A sequence $\{\bOmega^{(N)}\}_{N\geq 1}$ of finite partitions of the unit cube is called \emph{equidistributed} 
if for \emph{any} choice of  $\bx_i^{(N)}\in \Omega_i^{(N)}$, $i=1,\ldots,N$, the triangular array $\widehat{\bx}=\big(\bx_1^{(N)},\ldots,\bx_N^{(N)}\big)_{N\in \NN}$ is uniformly distributed (Definition \ref{def1}). Actually, in \cite{volcic2} this notion is introduced and exploited in larger generality, replacing the unit cube with a general separable metric space.

In Fig.~\ref{Fig:Implic}, we outline the connections between different notions for partitions, where a sequence $\{\bOmega^{(N)}\}_{N\geq 1}$ of finite partitions of $[0,1]^d$ is said to have \emph{vanishing average diameter} if the average diameter 
	\[\overline \delta_N=\frac{1}{N}\sum_{i=1}^N \text{diam}\,(\Omega_i^{(N)})\]
	converges to zero as $N\to \infty$. We also use the set 
	\[
	I_B(N)=\big\{i\in \{1,\ldots,N\}: \Omega_i^{(N)}\subset B\big\}, 
	\]
	of all indices $i$ for which $\Omega_i^{(N)}$ is completely contained in $B\subset [0,1]^2$, and the condition 
	\begin{equation}\label{new}
		\lim_{N\to \infty}\frac1N \#I_{[{\bf x}, {\bf y}[}(N)
		=|[{\bf x}, {\bf y}[|,\quad 	\text{ for all }[{\bf x}, {\bf y}[\subset [0,1]^d,
	\end{equation}
	stating that asymptotically the `correct' proportion of sets are completely contained in  $[{\bf x}, {\bf y}[$.  
	That an equi\-volume sequence of partitions with vanishing \emph{maximal} diameter is equidistributed was shown in 
	\cite[Lemma 1]{volcic2}, but our implication Fig.~\ref{Fig:Implic}(a) is stronger, as it allows for `large' or `elongated' sets, as long as the proportion of such sets within the $N$ sets of the partitions goes to zero as $N\to\infty$. 

	\begin{proposition}
		The implications in Fig.~\ref{Fig:Implic} hold. Apart from the equivalence in (b) none of the implications can be reversed.	
\end{proposition}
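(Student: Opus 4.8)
The plan is to prove the five implications (a)--(e) individually and then refute the reversals of (a), (c), (d), (e) by two explicit partition sequences, noting that (b) is an equivalence and so has nothing to reverse. Three of the implications are immediate. Implication (e) is the tautology that ``equivolume with vanishing average diameter'' has ``equivolume'' as one of its two defining conditions. Implication (d) is a direct consequence of Proposition \ref{prop:2}: for an equivolume sequence $|\Omega_i^{(N)}|=1/N$, so the left-hand side of \eqref{eqUnifDist} reduces to $\sum_{i=1}^N|\Omega_i^{(N)}\cap[\bx,\by[|=|[\bx,\by[|$, which holds without even passing to the limit. Implication (c) follows because equidistribution is a statement about \emph{every} deterministic choice of representatives $\bx_i^{(N)}\in\Omega_i^{(N)}$, and each realization of the stratified sample $\cP_{\bOmega^{(N)}}$ is such a choice; hence every realization---and in particular almost every one---produces a uniformly distributed array.

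For (a) I would fix a box $B=[\bx,\by[$ and a threshold $\delta>0$, and split the sets of $\bOmega^{(N)}$ into \emph{small} ones (diameter less than $\delta$) and \emph{large} ones. A Markov-type estimate bounds the number of large sets by $(\overline{\delta}_N/\delta)\,N$, and the equivolume property ($|\Omega_i^{(N)}|=1/N$) converts this into a total volume of at most $\overline{\delta}_N/\delta$. Every small set meeting the eroded box $B^{-\delta}=\{z\in B:\mathrm{dist}(z,B^c)\geq\delta\}$ lies entirely in $B$; since $B^{-\delta}$ is covered by all partition sets and the large ones among them carry volume at most $\overline{\delta}_N/\delta$, the small sets meeting $B^{-\delta}$ have total volume at least $|B^{-\delta}|-\overline{\delta}_N/\delta$, hence number at least $N(|B^{-\delta}|-\overline{\delta}_N/\delta)$. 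Together with the trivial bound $\tfrac1N\#I_B(N)=|\bigcup_{i\in I_B(N)}\Omega_i^{(N)}|\leq|B|$, letting $N\to\infty$ (so $\overline{\delta}_N\to0$) and then $\delta\to0$ (so $|B^{-\delta}|\to|B|$, as $\partial B$ is a null set) squeezes $\tfrac1N\#I_B(N)\to|B|$, which is \eqref{new}. The delicate point is that only the \emph{average} diameter is assumed to vanish, so the Markov bound, rather than a uniform diameter bound, is exactly what makes this sharper-than-\cite{volcic2} statement work.

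The substantive implication is the equivalence (b). For a box $B$, write $\underline{n}_B=\#I_B(N)$, write $n_B=\#\{i:\bx_i^{(N)}\in B\}$ for a given choice of representatives, and $\overline{n}_B=\#\{i:\Omega_i^{(N)}\cap B\neq\emptyset\}$, so that $\underline{n}_B\leq n_B\leq\overline{n}_B$ and $\overline{n}_B-\underline{n}_B$ counts the sets straddling $\partial B$. If the sequence is equidistributed, then choosing a representative outside $B$ for every straddling set forces $\tfrac1N\underline{n}_B\to|B|$, which is precisely \eqref{new}. Conversely, assuming \eqref{new}, I would tile $[0,1]^d$ by the $3^d$ boxes cut out by the grid hyperplanes through the coordinates of $\bx$ and $\by$; applying \eqref{new} to each tile $C$ and summing gives $\tfrac1N\sum_C\#I_C(N)\to\sum_C|C|=1$. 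As $\sum_C\#I_C(N)\leq N$, the proportion of sets contained in none of the tiles tends to $0$, and the straddling sets of $B$ are among these, so $\tfrac1N(\overline{n}_B-\underline{n}_B)\to0$. The squeeze then yields $\tfrac1N n_B\to|B|$ for \emph{every} choice of representatives, i.e.\ equidistribution.

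Finally, for non-reversibility I would use two sequences. The vertical strip partitions (Section \ref{example1}) are equivolume, hence almost surely uniformly distributed by (d), yet \eqref{new} fails for the lower-left quadrant $[0,1/2[^2$ (no strip is contained in it), so they are not equidistributed and their average diameter does not vanish; this refutes the reversals of (c) and (e). A one-dimensional \emph{perturbed grid}---breakpoints placed so that their empirical measure tends to Lebesgue measure while consecutive gaps alternate between two unequal lengths---has vanishing diameter and satisfies \eqref{new}, hence is equidistributed and almost surely uniformly distributed, but is not equivolume; this refutes the reversals of (a) and (d). I expect the reverse direction of (b) (the $3^d$-tiling bookkeeping for the straddling sets) and the average-diameter estimate in (a) to be the two genuine obstacles, while (c), (d), (e) and the two counterexamples are routine.
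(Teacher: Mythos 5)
Your proposal is correct and follows essentially the same route as the paper's proof: Markov's inequality combined with the equivolume property for implication (a) (you erode the test box where the paper dilates its boundary --- a cosmetic dual formulation), the decomposition of the complement into at most $3^d-1$ half-open rectangles plus the sandwich inequality for the equivalence (b), the same trivial treatments of (c), (d), (e), and the vertical strip partitions as the counterexample refuting the reversals of (c) and (e). Your explicit one-dimensional alternating-gap partition refuting the reversals of (a) and (d) is a concrete instance of what the paper only sketches (``take a sequence satisfying the condition and change two sets appropriately''), and it works.
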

\begin{proof}
	To show (a) fix $[{\bf x}, {\bf y}[ \subset [0,1]^d$,
		assume that the sets of $\{\bOmega^{(N)}\}_{N\geq 1}$ are equivolume, and observe that 
		in this case 
		\[
		\frac{1}{N} \#I_{[{\bf x}, {\bf y}[}(N)=\left| \bigcup_{i\in I_{
				[{\bf x}, {\bf y}[}(N)}\Omega^{(N)}_i\right|\le 
		\left| \bigcup_{i=1}^N(\Omega^{(N)}_i\cap [{\bf x}, {\bf y}[)\right|=|[{\bf x}, {\bf y}[|. 
		\]
		On the other hand, if 
		\[
		T_B(N)=\big\{i\in \{1,\ldots,N\}: \Omega_i^{(N)}\cap B\ne \emptyset,\Omega_i^{(N)}\cap B^c\ne \emptyset\big\},
		\]
		describes the sets of the partition hitting both,  
		$B\subset [0,1]^d$ and its complement, we have 
		\[
		\frac{1}{N} \#I_{[{\bf x}, {\bf y}[}(N)+\frac{1}{N} \#T_{[{\bf x}, {\bf y}[}(N)
		=\left| \bigcup_{i\in I_{[{\bf x}, {\bf y}[}(N)\cup T_{[{\bf x}, {\bf y}[}(N)}\Omega^{(N)}_i\right|
		\ge |[{\bf x}, {\bf y}[|,
		\]
		 so it is enough to show that 
		\begin{align}\label{eqT}
			\frac{1}{N} \#T_{[{\bf x}, {\bf y}[}(N)\to 0
		\end{align}
		as $N\to \infty$.  To show \eqref{eqT} let $\varepsilon>0$ and $\alpha>0$ be given. 
		Markov's inequality and the assumption of vanishing average diameter yield the existence of  $N_0\in \N$ such that 
		\[
		\frac{1}N \#\{i:\text{diam}\,(\Omega_i^{(N)})>\alpha\}\le \frac{\overline \delta_N}{\alpha}\le     \frac{\varepsilon}2
		\]
		for all $N\ge N_0$. Hence, using again the equivolume property,  
		\[
		\frac{1}{N} \#T_{[{\bf x}, {\bf y}[}(N)\le \frac{1}N \#\{i:\text{diam}\,(\Omega_i^{(N)})>\alpha\}+
		\left|(\bd [{\bf x}, {\bf y}[)_\alpha\cap [0,1]^d\right|\le \frac{\varepsilon}2+4d\alpha,
		\]
		where $(\bd [{\bf x}, {\bf y}[)_\alpha$ is the set of all points  with distance at most $\alpha$ from the boundary of $[{\bf x}, {\bf y}[$. Choosing $\alpha=\varepsilon/(8d)$ implies \eqref{eqT}.  

	To show the equivalence in (b) we note that \eqref{new} implies \eqref{eqT}. In fact, if  $[{\bf x}, {\bf y}[ \subset [0,1]^d$ is fixed, the set $B=[0,1[^d\setminus [{\bf x}, {\bf y}[$ can be written as disjoint union of at most $k=3^d-1$ half-open rectangles $B_1,\ldots,B_k$, so 
		\[
		\frac{1}{N} \#T_{[{\bf x}, {\bf y}[}(N)\le \frac{1}{N} (N-\#I_{[{\bf x}, {\bf y}[}(N)-\sum_{j=1}^k \#I_{B_j}(N))\to 0,
		\]
		by applying   \eqref{new} to all $3^d$ rectangles.
	
		If $\cP_N=\{\bx_1,\ldots,\bx_N\}$ satisfies $\bx_i\in \Omega_i^{(N)}$ but is otherwise arbitrary,  then 
		\begin{align}\label{eqreview}
			I_{[\bx,\by]}(N)\le \#\{i: \bx_i\in [0,\bx]\}\le I_{[\bx,\by]}(N)+T_{[\bx,\by]}(N). 
		\end{align}
		and there are sets for which either of the two inequalities becomes an equality. 
		One can thus conclude that \eqref{new} (together with its consequence, \eqref{eqT}) implies equidistribution of $\{\bOmega^{(N)}\}_{N\geq 1}$. Conversely, assuming equidistribution, we may choose $\cP_N$ such that there is equality on the left side of \eqref{eqreview} for every $N$, and  \eqref{new}  follows.

	Implication (c) is trivial as a stratified sample point lies a.s. in the partition set it is associated to.
	Implication (d) has already been shown and (e) is trivial. 
	\medskip 
	
	None of the above implications can be reversed.That implication (a) cannot be reversed is clear, as one just takes a sequence satifying \eqref{new} and changes two sets appropriately to assure that the sequence is not equivolume. Implication (d) can be treated in a similar way. That implication (c) cannot be reversed can be seen by means of vertical strip partitions $(\bOmega^{(N)}_{\mathrm{vert}})$ in
	Fig.~ \ref{fig:simplePartition} (left). For all $N$ these partitions are equivolume and thus the stratified sample based on them is a.s.~uniformly distributed. However, this sequence of partitions is not equidistributed, as the limit in \eqref{new} is always zero. This example also shows that implication (e) cannot be reversed.
\end{proof}

\subsection*{Appendix B -- Proof of Lemma \ref{lem:n3}}
In this part of the appendix we present the details we omitted in the proof of Lemma \ref{lem:n3} in Section \ref{sec4:ex4}. Lemma \ref{lem:n3a} can then be shown along the very same lines with slightly different probabilities and boundaries for the six subcases; see Fig. \ref{fig:cases2}.

\begin{proof}[Proof of Lemma \ref{lem:n3}]
We set $A=v_1 \sqrt{2}$, $B=v_2 \sqrt{2}-1$ such that for given $1/2 \leq A \leq 1$ we have $2A-1 \leq B \leq A$.

Recall that we 
	need to evaluate \eqref{eqf_i} with the notation introduced just before this formula was stated.
The probabilities in each of the  cases below are obtained from elementary calculations.

\paragraph{\bf Case I} The case $(x,y) \in \Omega_1=S_1$ was already discussed in the main text.
\paragraph{\bf Case II} 
Let $(x,y) \in S_2$; i.e. $0\leq x \leq B$ and $A \leq y \leq 1$ as well as $B\leq x \leq A$ and $A \leq y \leq 1+B-x$. Then $q_3(x,y)=0$,
$$q_1(x,y)=\frac{2 \left(A x-\frac{x^2}{2}\right)}{A^2} \ \ \ \text{ and } \ \ \ 
q_2(x,y)=\frac{x (y-A)+\frac{x^2}{2}}{-\frac{A^2}{2}-\frac{1}{2} (1-B)^2+1}.$$ 
Hence, we get
\begin{align*}
f_2&(x,y)= \frac{1}{9 A^2 \left(A^2+(B-2)
   B-1\right)}  \times\\
   &  \big( x^3 \left(-2 y \left(-6 A^2-3 (B-2) B+1\right)-8A\right) \\
   &+x^2 \left(3 A^2 y^2 \left(3 A^2+3 (B-2) B+1\right)-4 A y \left(6 A^2+3 (B-2) B-1\right)+6 A^2+(2-B) B+1\right) \\
   &+x \left(4 A^3-2 A^2 y+2 A (B-2) B-2 A\right) +2 x^4b\big).
   \end{align*}

\paragraph{\bf Case III} 
Let $(x,y) \in S_{3}$; i.e. $0 \leq x \leq A/2$ and $A-x \leq y \leq A$ as well as $A/2 \leq x \leq A$ and $x \leq y \leq A$. Then $q_3(x,y)=0$ and 
$$q_1(x,y)=\frac{2 \left(x y-\frac{1}{2} (-A+x+y)^2\right)}{A^2} \ \ \ \text{ and } \ \ \ 
q_2(x,y)=-\frac{(-A+x+y)^2}{A^2+(B-2) B-1}.$$ 
Hence, we get
\begin{align*}
f_3&(x,y)= \frac{1}{9 A^2 \left(A^2+(B-2) B-1\right)} \times \\
&\left( 2 x^4 + x^3 \left(12 A^2 y-8 A+6 (B-2) B y-2
   y\right) \right. \\
 &+x \left(12 A^4 y-24
   A^3 y^2-4 A^3+6 A^2 B^2 y-12 A^2 B y+12 A^2 y^3+12A^2 y-12 A (B-2) B y^2 \right . \\
  & \qquad \left. +2 A (B-2) B-4 A y^2-2 A+6 (B-2) B y^3-2 y^3\right) \\
  & +x^2 \left(9 A^4 y^2-24 A^3 y+3 A^2 (3 (B-2) B+1) y^2+10 A^2-12 A (B-2) B y \right. \\
  &\qquad \left. -4 A y+(2-B) B+4 y^2+1\right) \\
  &  +2 A (B-2) B y-8 A y^3-2 A  y-(B-2) B y^2+2 y^4+y^2 +2 A^2 B \\
  &\qquad \left. +10 A^2 y^2+A^2 -4 A^3 y-A^2 B^2 \right). 
\end{align*}

\paragraph{\bf Case IV} 
Let $(x,y) \in S_{4}$; i.e. $A \leq x \leq 1+B-A$ and $A \leq y \leq 1+B-x$. Then $q_1(x,y)=1, q_3(x,y)=0$ and
$$q_2(x,y)= \frac{x y-\frac{x^2}{2}}{-\frac{A^2}{2}-\frac{1}{2} (1-B)^2+1}.$$ 
Hence, we get
\begin{align*}
f_4(x,y)= \frac{(-1 - 2 B (1 - 3 x y)^2 + B^2 (1 - 3 x y)^2 +  x^2 (4 + 3 y (-4 x + y + 3 A^2 y)))}{9 (-1 + A^2 + (-2 + B) B)}.  \end{align*}

\paragraph{\bf Case V} 
Let $(x,y) \in S_5$; i.e. $B \leq x \leq A$ and $1+B-x \leq y \leq 1$. Then $q_2(x,y)=q_3(x,y)=0$ and $q_1(x,y)=2xy/A^2$. Hence, we get
\begin{align*}
f_5&(x,y)= \frac{1}{9 A^2 (-1 + B)^2 (-1 + A^2 + (-2 + B) B)} \times \\
&\left( A^4 B^2-2 A^4 B y+2 A^4 B+A^4 y^2-2 A^4 y+A^4+2 A^2 (B+1)^2 \left(2 B^2+1\right)
 \right. \\
& \qquad -8 A^2 B y^3+2 A^2 (B (7 B+10)+6) y^2-4 A^2 B (B (3 B+5)+4) y+2 A^2 y^4-8 A^2
   y^3-8 A^2 y \\
   & +x^4 \left(-2 A^2-2 B^2+4 B+2\right) \\
&   +x^3 \left(-6 A^4 y+8 A^3+8 A^2 y-8 A+6
   B^4 y+B^3 (8-24 y)-2 B^2 (4-10 y)+8 B (y-2)-2
   y\right) \\
&   +x^2 \left(9 A^4 (B-2) B y^2+12 A^4 B y-3
   A^4 y^2+12 A^4 y+A^4-24 A^3 (B-2) B y-16 A^3 B \right. \\
   & \qquad  -8 A^3 y-16 A^3+2 A^2 \left(8 B^2+7\right)+3 A^2 ((B-2)
   B-1) (3 (B-2) B-1) y^2 \\
  &\qquad  +8 A^2 (B+1) (3 (B-2) B-1)y-12 A B^4 y+8 A B^3 (6 y-2)+8 A B^2 (2-5 y) \\
  &\qquad \left. -16 A B
   (y-2)+4 A y-5 B^4+8 B^3 y+4 B^3-4 B^2 y (y+2)+8
   B^2+8 B (y-1)^2+1\right) \\
&   +x \left(-6 A^4 B^2 y+12 A^4
   B y^2-12 A^4 B y-2 A^4 B-6 A^4 y^3+12 A^4 y^2-4 A^4
   y-2 A^4+12 A^3 B^2  \right. \\
 &\qquad  -16 A^3 B y+8 A^3 B+8 A^3 y^2-16
   A^3 y+12 A^3-2 A^2 B \left(6 B^3-29 B-30\right) y \\
&\qquad -12 A^2 (B-2) B y^3 +8 A^2 (B+1) (3 (B-2) B-2) y^2-4 A^2
   (B+1) (B (3 B+2)+2) \\
  & \qquad +4 A^2 y^3+18 A^2 y+10 A B^4-16 A
   B^3 y-8 A B^3+8 A B^2 y (y+2)
   \\
   & \qquad \left. \left.-16 A B^2-16 A B
   (y-1)^2-2 A\right) \right).
\end{align*}


\paragraph{\bf Case VI} 
Let $(x,y) \in S_6$; i.e. $A \leq x \leq (B+1)/2$ and $1+B-x \leq y \leq 1$ as well as $(B+1)/2 \leq x \leq 1$ and $x \leq y \leq 1$. Then $q_1(x,y)=1$ and 
$$q_2(x,y)=\frac{-\frac{A^2}{2}-\frac{1}{2} (-B+x+y-1)^2+x
   y}{-\frac{A^2}{2}-\frac{1}{2} (1-B)^2+1} \ \ \ \text{ and } \ \ \  q_3(x,y)=\frac{(-B+x+y-1)^2}{(1-B)^2}.$$ 
Hence, we get
\begin{align*}
f_6 &(x,y)= \frac{1}{{9 (B-1)^2 \left(A^2+(B-2) B-1\right)}} \times \\
&\left( A^4 B^2-2 A^4 B y+2 A^4 B+A^4 y^2-2 A^4 y+A^4 +2 A^2 (B+1)^2 \left(2 B^2+1\right)\right . \\
&\quad -8 A^2 B y^3+2 A^2 (B (7 B+10)+6) y^2-4 A^2 B (B (3 B+5)+4) y+2 A^2 y^4-8 A^2 y^3-8 A^2 y \\
&+x^4
   \left(-2 A^2-2 B^2+4 B+2\right) \\
&   +x^3 \left(-6 A^4 y+8 A^3+8 A^2 y-8 A+6
   B^4 y+B^3 (8-24 y)-2 B^2 (4-10 y)+8 B (y-2)-2
   y\right) \\
  & +x^2 \left(9 A^4 (B-2) B y^2+12 A^4 B y-3
   A^4 y^2+12 A^4 y+A^4-24 A^3 (B-2) B y-16 A^3 B \right. \\
 & \qquad  -8 A^3y-16 A^3+2 A^2 \left(8 B^2+7\right)+3 A^2 ((B-2)
   B-1) (3 (B-2) B-1) y^2\\
   &\qquad +8 A^2 (B+1) (3 (B-2) B-1)y -12 A B^4 y+8 A B^3 (6 y-2)+8 A B^2 (2-5 y) \\
   &\qquad \left. -16 A B
   (y-2)+4 A y-5 B^4+8 B^3 y+4 B^3-4 B^2 y (y+2)+8
   B^2+8 B (y-1)^2+1\right) \\
   & +x \left(-6 A^4 B^2 y+12 A^4
   B y^2-12 A^4 B y-2 A^4 B-6 A^4 y^3+12 A^4 y^2-4 A^4
   y-2 A^4+12 A^3 B^2 \right. \\
   & \qquad -16 A^3 B y+8 A^3 B+8 A^3 y^2-16
   A^3 y+12 A^3-2 A^2 B \left(6 B^3-29 B-30\right) y \\
   &\qquad -12  A^2 (B-2) B y^3+8 A^2 (B+1) (3 (B-2) B-2) y^2-4 A^2
   (B+1) (B (3 B+2)+2) \\
   &\qquad  +4 A^2 y^3+18 A^2 y+10 A B^4-16 A
   B^3 y-8 A B^3+8 A B^2 y (y+2)
   \\
   &\qquad \left. \left.
   -16 A B^2-16 A B
   (y-1)^2-2 A\right) \right).\\
\end{align*}


Inserting the explicit expressions of the functions $f_1, \ldots, f_6$ into \eqref{eqf_i} 
gives us the following rational function in $A$ and $B$ which we can minimize over $1/2\leq A \leq 1$ and $2A-1 \leq B \leq A$ in order to obtain the two parameters $A$ and $B$ that generate the partition with the smallest expected discrepancy in this family: 
\begin{align*}
&\E \cL_2^2(\cP_{\bOmega_{v_1,v_2}^{(3)}} ) =
\frac{1}{25920 A^2 (A^2+(B-2) B-1)} \times \\
&\hspace{1cm} \left. (480 A^8+6912 A^7 (B+1)+16 A^6 (-527 + 330 B + 195 B^2) \right. \\
&\hspace{1cm} \left. -384 A^5 (13 + 67 B + 87 B^2 + 33 B^3)+ \right. \\
&\hspace{1cm}\left. 12 A^4 (829 + 1952 B + 2720 B^2 + 1228 B^3 + 431 B^4) \right. \\
&\hspace{1cm}\left. +48 A^3 (-54 - 181 B - 204 B^2 + 23 B^3 + 120 B^4 + 24 B^5) \right. \\
 &\hspace{1cm}  \left. -4 A^2 (118 + 104 B + 345 B^2 + 776 B^3 - 728 B^4 + 1368 B^5 + 393 B^6)\right. \\
&\hspace{1cm}   \left. +48 A (-10 + 7 B + 76 B^2 - 3 B^3 - 140 B^4 - 42 B^5 + 36 B^6 + 12 B^7)\right. \\
&\hspace{1cm}\left. -(56 B + 774 B^2 + 556 B^3 - 1364 B^4 - 1728 B^5 - 70 B^6 + 396 B^7 + 
 99 B^8)\right).
\end{align*}
This function can be minimised using a standard computer algebra system.
The minimum of this function is $0.0268044$ for the parameter values $A= 0.7512174$ and $B= 0.513013$; see Fig. \ref{N3min}. 
These parameter values correspond to $v_1 = 0.5311\ldots$ and $v_2 = 1.0698\ldots$.
\end{proof}



\begin{thebibliography}{20}



\bibitem{aisti} C. Aistleitner, Covering numbers, dyadic chaining and discrepancy. J. Complexity 27 (2011), no. 6, 531-540. 

\bibitem{chen1} W.W.L. Chen, M.M.Skriganov, Explicit constructions in the classical mean squares problem in irregularity of point distribution. J. Reine Angew. Math., 545 (2002), 67--95.

\bibitem{cranley} R. Cranley, T.N.L. Patterson, Randomization of number theoretic methods for multiple integration. SIAM Journal on Numerical Analysis 13(6) (1976), 904--914.

\bibitem{volcic2} F. Chersi, A. Vol\v ci\v c, $\lambda$-equidistributed sequences of partitions and a theorem of the De Bruijn-Post type. Ann. Mat. Pura Appl., 162 (1992), 23--32.

\bibitem{dave} H. Davenport, Note on irregularities of distribution. Mathematika 3 (1956),131--135.

\bibitem{dob} D. P. Dobkin, D. Eppstein and D. P. Mitchell, Computing the Discrepancy with Applications to Supersampling Patterns, ACM Trans. Graph. (TOG) (15) (1996), no. 4, 354--376.

\bibitem{persi} P. Diaconis, The distribution of leading digits and uniform distribution mod 1, Ann. Probab. 5(1) (1977), 72--81.

\bibitem{DP} J. Dick, F. Pillichshammer, Digital Nets and Sequences, Cambridge Univ. Press, Cambridge, England, 2010.

\bibitem{DP2} J. Dick, F. Pillichshammer, Explicit constructions of point sets and sequences with low discrepancy, Kritzer, Peter (ed.) et al., Uniform distribution and quasi-Monte Carlo methods. Discrepancy, integration and applications. Radon Series on Computational and Applied Mathematics 15, 63-86 (2014).

\bibitem{DP3} J. Dick and F. Pillichshammer, Optimal $\mathcal{L}_2$-discrepancy bounds for higher order digital sequences over the finite field $\FF_2$, Acta Arith. 162, No. 1 (2014), 65--99.

\bibitem{dorr} B. Doerr, A lower bound for the discrepancy of a random point set. J. Complexity 30 (2014), no. 1, 16--20. 

\bibitem{doerr} C. Doerr, M. Gnewuch and M. Wahlstr\"{o}m, Calculation of Discrepancy Measures and Applications, Chen, William (ed.) et al., A panorama of discrepancy theory. Springer, Lecture Notes in Mathematics 2107, 621-678 (2014). 

\bibitem{ecu2} P. L'Ecuyer, C. Lemieux, Variance reduction via lattice rules. Management Science 46(9) (2000), 1214--1235.

\bibitem{ecu1} P. L'Ecuyer, Randomized Quasi-Monte Carlo: An Introduction for Practitioners. Owen, Art B. (ed.) et al., Monte Carlo and quasi-Monte Carlo methods, MCQMC 2016. Proceedings of the 12th international conference on Monte Carlo and quasi-Monte Carlo methods in scientific computing, Stanford, CA, August 14--19, 2016. Springer Proc. Math. Stat. 241, 29--52 (2018).

\bibitem{Federer} H. Federer, Curvature measures, Trans. Am. Math. Soc. 93 (1959), 418--491.

\bibitem{heinrich1} K. Frank and S. Heinrich, Computing discrepancies of Smolyak quadrature rules, J. Complexity 12 (1996), 287--314.

\bibitem{gnew} M. Gnewuch,
Bracketing numbers for axis-parallel boxes and applications to geometric discrepancy. 
J. Complexity 24 (2008), no. 2, 154-172. 

\bibitem{gnewuch} M. Gnewuch, H. Pasing, C. Wei\ss, A Generalized Faulhaber Inequality, Improved Bracketing Covers, and Applications to Discrepancy, arXiv:2010.11479 

\bibitem{haber} S. Haber, A modified Monte Carlo quadrature. Math. Comput. 19 (1966), 361--368.



\bibitem{heinrich2} S. Heinrich, Efficient algorithms for computing the L2 discrepancy. Math. Comput. 65 (1996), 1621--1633.

\bibitem{hnww} S. Heinrich, E. Novak, G. Wasilkowski and H. Wozniakowski, 
The inverse of the star-discrepancy depends linearly on the dimension. 
Acta Arith. 96 (2001), no. 3, 279--302. 

\bibitem{hoeffding} W. Hoeffding, On the distribution of the number of successes in independent trials. 
    Ann. Math. Statist. 27 (1956), 713--721.


\bibitem{HuTaylor} T-C Hu and R.L. Taylor, On the strong law for arrays and for the bootstrap mean and variance. Internat. J. Math. \& Math. Sci., 20 (1997),  375--382. 

\bibitem{kakutani} S. Kakutani, A problem on equidistribution on the unit interval [0,1]. Measure Theory, Oberwolfach 1975, vol. 541, 369--375. Springer LNM (1976).

\bibitem{klenke} A. Klenke, Probability Theory -- A Comprehensive Course. 2nd edn. Springer, Heidelberg (2013). 


\bibitem{mat} J. Matou\v sek, Geometric discrepancy. An illustrated guide. Algorithms and Combinatorics, vol 18. Springer, Berlin, Germany, 1999.

\bibitem{nied} H. Niederreiter, R. F. Tichy and G. Turnwald, An inequality for differences of distribution functions. Arch. Math. 54 (1990), 166--172.


\bibitem{stefan1} F. Pausinger, S. Steinerberger, On the discrepancy of jittered sampling, J. Complexity 33 (2016), 199--216.
\bibitem{stefan2} F. Pausinger, M. Rachh, S. Steinerberger, Optimal jittered sampling for two points in the unit square, Statist. Probab. Lett. 132 (2018), 55--61.

\bibitem{owen1} A.B. Owen, Monte Carlo variance of scrambled equidistribution quadrature. SIAM Journal on Numerical Analysis 34(5) (1997), 1884--1910.
\bibitem{owen2} A.B. Owen, Scrambled net variance for integrals of smooth functions. Annals of Statistics 25(4) (1997), 1541--1562.

\bibitem{owen3} A.B. Owen, Variance with alternative scramblings of digital nets. ACM Transactions on Modeling and Computer Simulation 13(4) (2003), 363--378.


\bibitem{pyke} R. Pyke, W. R. van Zwet, Weak convergence results for the Kakutani interval splitting procedure, Ann. Probab. 32:1A (2004), 380--423.

\bibitem{roth} K. F. Roth, On irregularities of distribution. Mathematika 1 (1954), 73--79.

\bibitem{schneider} R. Schneider, Convex Bodies: The Brunn-Minkowsky Theory. 2nd edn. Cambridge Univ. Press, Cambridge, 2004. 

\bibitem{SW} R. Schneider, W. Weil, Stochastic and Integral Geometry. Springer, Berlin, 2008. 

\bibitem{Thaele} Ch. Th\"ale, 50 years sets with positive reacha survey. 
Surv. Math.Appl. 3 (2008), 123--165.

\bibitem{thompson} 
 S.K. Thompson, Sampling. 3rd edn.  Wiley Series in Probability and Statistics
  Wiley, New York, {2012}. 

\bibitem{volcic} A. Vol\v ci\v c, A generalization of Kakutani's splitting procedure. Ann. Mat. Pura Appl. 190:1 (2011), 45-54.


\bibitem{warnock} T. T. Warnock, Computational investigations of low discrepancy point sets. In S. K. Zaremba (editor), Applications of number theory to numerical analysis, pages 319--343. Academic Press, 1972.



\end{thebibliography}
\end{document}